\theoremstyle{plain}
\newtheorem{theorem}{Theorem}[section]
\newtheorem{lemma}[theorem]{Lemma}
\newtheorem{proposition}[theorem]{Proposition}
\newtheorem{corollary}[theorem]{Corollary}
\newtheorem{remark}[theorem]{Remark}
\newtheorem{definition}[theorem]{Definition}
\theoremstyle{definition}
\theoremstyle{remark}
\numberwithin{equation}{section}
\mathchardef\emptyset="001F
\newcommand{\e}{\varepsilon}
\newcommand{\Om}{\Omega}
\newcommand{\om}{\omega}
\newcommand{\weakst}{\stackrel{\ast}{\rightharpoonup}}
\newcommand{\R}{{\mathbb R}}
\newcommand{\F}{{\mathcal F}}
\renewcommand{\L}{{\mathcal L}}
\newcommand{\I}{{\mathcal I}}
\newcommand{\E}{{\mathcal E}}
\newcommand{\W}{{\mathcal W}}
\newcommand{\A}{{\mathcal A}}
\newcommand{\Z}{{\mathbb Z}}
\newcommand{\N}{{\mathbb N}}
\newcommand{\M}{{\mathbb M}}
\renewcommand{\H}{{\mathcal H}}
\newcommand{\Mdd}{\M^{2\times 2}}
\newcommand{\Mnn}{\M^{N\times N}}
\newcommand{\dsp}{\displaystyle}
\newcommand{\rank}{\mathrm{rank}}
\newcommand{\dist}{\mathrm{dist}}
\newcommand{\co}{{\rm co}}
\newcommand{\ut}{\tilde u}
\newcommand{\w}{\mathrm{w}}
\newcommand{\wu}{\mathrm{v}}
\newcommand{\subdo}{ P}
\newcommand{\Subdo}{\Pi}
\newcommand{\ga}{\gamma}
\newcommand{\Ga}{\Gamma}
\newcommand{\LL}{\L}
\newcommand{\TT}{\mathcal T}
\newcommand{\NN}[2]{#1,\,#2\ {\rm NN}} 
\newcommand{\NNN}[2]{#1,\,#2\ {\rm NNN}}
\newcommand{\wto}{\rightharpoonup}
\newcommand{\be}{\begin{equation}}
\newcommand{\ee}{\end{equation}}
\newcommand{\bes}{\begin{equation*}}
\newcommand{\ees}{\end{equation*}}
\newcommand{\bea}{\begin{eqnarray}}
\newcommand{\eea}{\end{eqnarray}}
\newcommand{\beas}{\begin{eqnarray*}}
\newcommand{\eeas}{\end{eqnarray*}}
\newcommand{\integ}[3]{\int_{#1} #2 \, \d #3}
\newcommand{\integlin}[4]{\int_{#1}^{#2} #3 \, \d #4}
\renewcommand{\d}{\mathrm{d}}
\newcommand{\D}{\nabla} 
\newcommand{\eps}{\varepsilon}
\begin{document}
\title[A discrete to continuum analysis of dislocations in nanowires]{A discrete to continuum analysis of dislocations in nanowire heterostructures}
\author{Giuliano Lazzaroni}
\author{Mariapia Palombaro}
\author{Anja Schl\"omerkemper}
\address{G.\ Lazzaroni, A.\ Schl\"omerkemper: University of W\"urzburg, Department of Mathematics, Emil-Fischer-Stra\ss{}e 40, 97074 W\"urzburg, Germany}
\email[G.\ Lazzaroni]{giuliano.lazzaroni@uni-wuerzburg.de}
\email[A.\ Schl\"omerkemper]{anja.schloemerkemper@mathematik.uni-wuerzburg.de}
\address{M.\ Palombaro: University of L'Aquila, Department of Information Engineering, Computer Science and Mathematics,
Via Vetoio 1 (Coppito), 67100 L'Aquila, Italy}
\email[M.\ Palombaro]{mariapia.palombaro@univaq.it}
\thanks{\today}
\begin{abstract}
\small{
Epitaxially grown heterogeneous nanowires present dislocations at the interface between the phases if their radius is big. 
We consider  a corresponding variational discrete model with quadratic pairwise atomic interaction energy. 
By employing the notion of Gamma-convergence and a geometric rigidity estimate, we perform 
a discrete to continuum limit and a dimension reduction to a one-dimensional system. Moreover, 
we compare a defect-free model and models with dislocations at the interface and show that the latter
are energetically convenient if the thickness of the wire is sufficiently large.
}
\end{abstract}
\maketitle
{\small
\keywords{\noindent {\bf Keywords:}
Nonlinear elasticity, Discrete to continuum, Dimension reduction, Rod theory, Geometric rigidity, Non-inter\-pen\-e\-tra\-tion, Gamma-con\-vergence, 
Crystals,  Dislocations, Heterostructures.
}
\par
\subjclass{\noindent {\bf 2010 MSC:}
74B20, 
74K10, 
74N05, 
70G75, 
49J45. 
}
}
\bigskip
\section*{Introduction}
Nanowire heterostructures are of interest in technical applications such as semiconductor electronics and optoelectronics. 
The epitaxial growth of such nanowires is a process of deposition of atoms of a certain crystal on top of a substrate made from a different crystal.
The devices consist then of two phases which meet at an interface that lies on the small cross-section of the wire.
If the radius of the wire is rather small, the wire can relieve a large amount of the strain energy induced by the presence of different phases by elastic deformation 
and then does not display dislocations; this is advantageous since dislocations interfere with electronic properties. 
Heterostructured nanowires are found to be promising devices since 
they can be grown defect-free 
more readily than films, see e.g.\ the reviews \cite{Kavanagh,Schmidtetal2010}. 
\par
The aim of this work is to provide further insight into the understanding of the (non)-occurrence of dislocations, 
see \cite{LPSpro} for an abridged version.
Ertekin et al.\ \cite{egcs} were the first who recognised that nanowire heterostructures should be defect-free 
if the radius is small enough. They proposed a variational principle in the context of linearised elasticity, 
which was later rigorously justified by M\"uller and Palombaro \cite{mp} in terms of $\Ga$-convergence. 
In \cite{mp} the authors consider a fully continuum model in the framework of nonlinear elasticity with dislocations
and study the $\Ga$-limit as the dimension of the system reduces to one,
extending the result of \cite{mm} for elastic multiphase materials.
In this work we show that their results can be recovered starting from a discrete model,
thus giving a microscopic justification of the continuous description. In our setting 
the formula for the energy contribution due to the lattice mismatch across the interface
solely depends on microscopic parameters and thus can in principle be computed from the data at the lattice scale.
\par
We assume that the nanowire heterostructure consists of two materials which are crystals with the same lattice structure,
but with different lattice distances. The interface between the two materials is assumed to be flat.
The resulting lattice mismatch can in principle be compensated
either by elastic strain or by introduction of dislocations at the interface:
the aim of this paper is to analyse which of the two cases is energetically preferred by the system.
In the former case, the crystal will be defect-free and the atomic bonds in the vicinity of the interface 
will be strained in order to pass from one lattice distance to the other;
in the latter situation, the number of lines of atoms
will be different in the two phases, see Section~\ref{sec:not} for details.
\par
Most of this paper is devoted to the two-dimensional case for the sake of simplicity and to prepare the three-dimensional case studied in Section~\ref{3d}. In order to simplify the presentation and to focus on the most important aspects of the problem,
we assume that the total energy depends only on nearest-neighbour interactions
and that these interactions are harmonic. Generalisations of these and other assumptions are discussed in Section~\ref{sec:rmk}. 
We suppose also that the nearest-neighbour bonds divide the space into rigid cells,
so we consider for instance the hexagonal (equilateral triangular) 
Bravais lattice in two dimensions and the face-centred cubic lattice in three dimensions,
related to e.g.\ silicon nanowires \cite{Schmidtetal2010}.
Finally, we postulate the non-interpenetration condition,
requiring that the discrete deformations preserve the orientation of each cell.
The same assumptions were made e.g.\ in~\cite{BSV},
see also \cite{FT} for the case of a square lattice with second neighbours. Here they allow us to apply the rigidity estimate of 
Friesecke, James, and M\"uller \cite{fjm}.
\par
We prove that defect-free configurations are more expensive than dislocated configurations
if the thickness of the wire is sufficiently large.
More precisely, in Section~\ref{sec:minimum} 
we compare the energy for a set of configurations that are allowed to contain defects at the interface only 
and that are defect-free, respectively. We assume that the crystal chooses that configuration
which minimises the total interaction energy between the atoms. 
For this, the nearest neighbours in the lattice should be defined in the deformed configuration.
However, following the usual approach in the mechanics of discrete systems
we say that the nearest neighbours in the deformed configuration are determined by the nearest neighbours in the reference configuration.
Therefore, in the study of defect-free and dislocated configurations we consider models which differ already in the reference configuration. 
The nearest neighbours are then chosen based on a Delaunay triangulation, as done e.g.\ in \cite{ACG} for stochastic lattices.
\par
For each of such models, we study the minimisers of the total interaction energy.
In Section~\ref{sec:lim} we perform a discrete to continuum analysis as the lattice distance tends to zero,
employing the notion of $\Ga$-convergence;
for its definition, its most relevant properties, and applications to discrete problems we refer e.g.\ to \cite{Braides,BG}.
The model is constructed in such a way that the lattices converge to a line as the lattice distance tends to zero,
so the dimension reduces to one.
Atomistic systems with dimension reduction were previously treated e.g.\ in \cite{ABC,FJ,Schm08} 
in the case of defect-free lattices converging to thin films.
However in those works the scaling of the energy is different from ours, see Remark \ref{rmk:abc};
more precisely, the $\Ga$-limit of our functionals (in the defect-free case)
is related to the first order $\Ga$-limit of the functionals studied in \cite{ABC,Schm08}.
In the case of screw dislocations, a different approach based on discrete to continuum techniques
was proposed in \cite{superPonsi}.
\par
The discrete to continuum limit provides a characterisation of the minimum cost necessary to compensate
the lattice mismatch in defect-free or dislocated models,
in dependence on a mesoscale parameter $k$. The thickness of the nanowire
is a multiple of the lattice distance, with $k$ denoting the proportionality coefficient.
We study such cost as $k$ varies, showing that it scales as $k^{N}$ for the minimal defect-free configuration
and as $k^{N-1}$ if dislocations are suitably introduced:
this is presented in Section \ref{sec:gamma} when the spatial dimension $N$ is two and in Section \ref{3d} for $N=3$.
As a consequence, dislocations are favoured if the thickness is sufficiently large (Corollary \ref{coro}).
It would be also of interest to determine the threshold $k_c$ such that 
the defect-free model is energetically preferred for every $k\le k_c$;
this however is to be left for future investigation.
\par
\section{Setting of the model}\label{sec:not}
We define an atomistic model for two-dimensional nanowires; in Section~\ref{3d} we will discuss a generalisation to three dimensions. 
We consider longitudinally heterostructured nanowires, with two crystalline phases corresponding to two types of atoms with different equilibrium distance.
In our simplified model, each of the two phases has a given, possibly different lattice distance also in the reference configuration, 
which leads to the presence of dislocations at the interface (if the nanowire's thickness is sufficiently large), see Figure~\ref{fig-disl}.
\par
The energy associated to a deformation of the atomistic system depends on four quantities, $\eps$, $\lambda$, $k$, and $\rho$.
The variable $\eps>0$ denotes the equilibrium distance between atoms of the first material.
We study a discrete to continuum limit, that is, the passage to the limit as $\eps$ tends to zero.
\par
The quantity $\lambda\in(0,1)$ is the ratio of the equilibrium distances of the two materials
(in the deformed configuration). It is a datum of the problem and will be constant throughout the paper.
Since the equilibrium distance of the first material is $\eps$, the atoms of the second one have equilibrium distance $\lambda\eps$.
\par
The body depends on another parameter $k\in\N$, $k\ge1$,
related to the thickness of the nanowire.
More precisely, in two dimensions the reference configuration is a parallelogram 
whose sides have length $2L$ and $k\eps$, respectively, with $L>0$ constant.
While $\eps\to0^+$, the thickness $k$ is fixed, so one has dimension reduction to a line.
After this passage to the limit, we will consider in particular the asymptotics for large $k$.
\par
Finally, we use a variable $\rho$ to allow for different geometries of the nearest neighbours
and in particular for dislocations.
In our model the atomic spacing in the reference configuration depends on the phase, too.
The parameter $\rho\in(0,1]$ stands for the ratio of the lattice distances of the two phases in the reference configuration
(the most interesting case being $\rho\in[\lambda,1]$).
\par
If $\rho=1$ we recover a defect-free model, where the coordination number (i.e., the number of nearest neighbours of an internal atom)
is constant in the lattice, see Figure \ref{fig-def-free}. For $\rho\neq1$ (and $k$ sufficiently large) the coordination number is not constant,
see Figure \ref{fig-disl}; when this happens, the crystal contains dislocations, see Remark \ref{rmk:Del1} and \ref{rmk:Del2} for more details.
\par
We now make precise the definition of the lattice and of the nearest neighbours, first of all in the unbounded case.
In each of the phases we consider a two-dimensional hexagonal (also called equilateral triangular) Bravais lattice, 
which is rigid already for nearest-neighbour interactions (an essential property for our result).
For simplicity, we assume the interface to be a straight line.
In Section \ref{more-general} and \ref{3d} we outline how the following results can be extended to other lattices in two or three dimensions.
\par
We set
$\w_1:=(1,0)$, $\w_2:=(\frac{1}{2},\frac{\sqrt{3}}{2})$, $\w_3:=\w_2-\w_1$, and
\bea  \label{1307251}
\LL_1^- &:=& \{ \xi_1\w_1+\xi_2\w_2\colon \xi_1,\xi_2\in\Z\,,\ \xi_1<0\} \,,\\  \label{1307252}
\LL_\rho^+ &:=& \{ \xi_1\w_1+\xi_2\w_2\colon \xi_1,\xi_2\in\rho\Z\,,\ \xi_1\ge0 \} \,,\\ \label{1307253}
\LL_\rho&:=&\LL_1^-\cup\LL_\rho^+\,.
\eea
For $\rho=1$, $\LL_1$ is the two-dimensional hexagonal Bravais lattice generated by the vectors $\w_1$ and $\w_2$.
The interfacial atoms lie then on the two lines 
$\{-\w_1+\xi_2\w_2\colon\xi_2\in\R\}$ and $\{\xi_2\w_2\colon\xi_2\in\R\}$.
It is possible to consider a different distance between the two interfacial lines, see Section \ref{more-general}. 
\par
\subsection{Triangulation of lattices}\label{subsec:tri}
We now discuss the choice of the nearest neighbours in $\LL_\rho$; 
to this aim, we employ the classic notions of Voronoi cell and Delaunay triangulation (Figure~\ref{fig-del}), see e.g.~\cite{Okabe}.
For the reader's convenience we state here the definitions of these diagrams, adapting them to our context.
When $\rho=1$ the following construction reduces to the standard notion of nearest neighbours in the two-dimensional hexagonal Bravais lattice,
i.e., the nearest neighbours of an atom are the six atoms with minimal distance.
\par
\begin{figure}[b]
\centering
\subfloat[]{
\includegraphics[width=.3\textwidth]{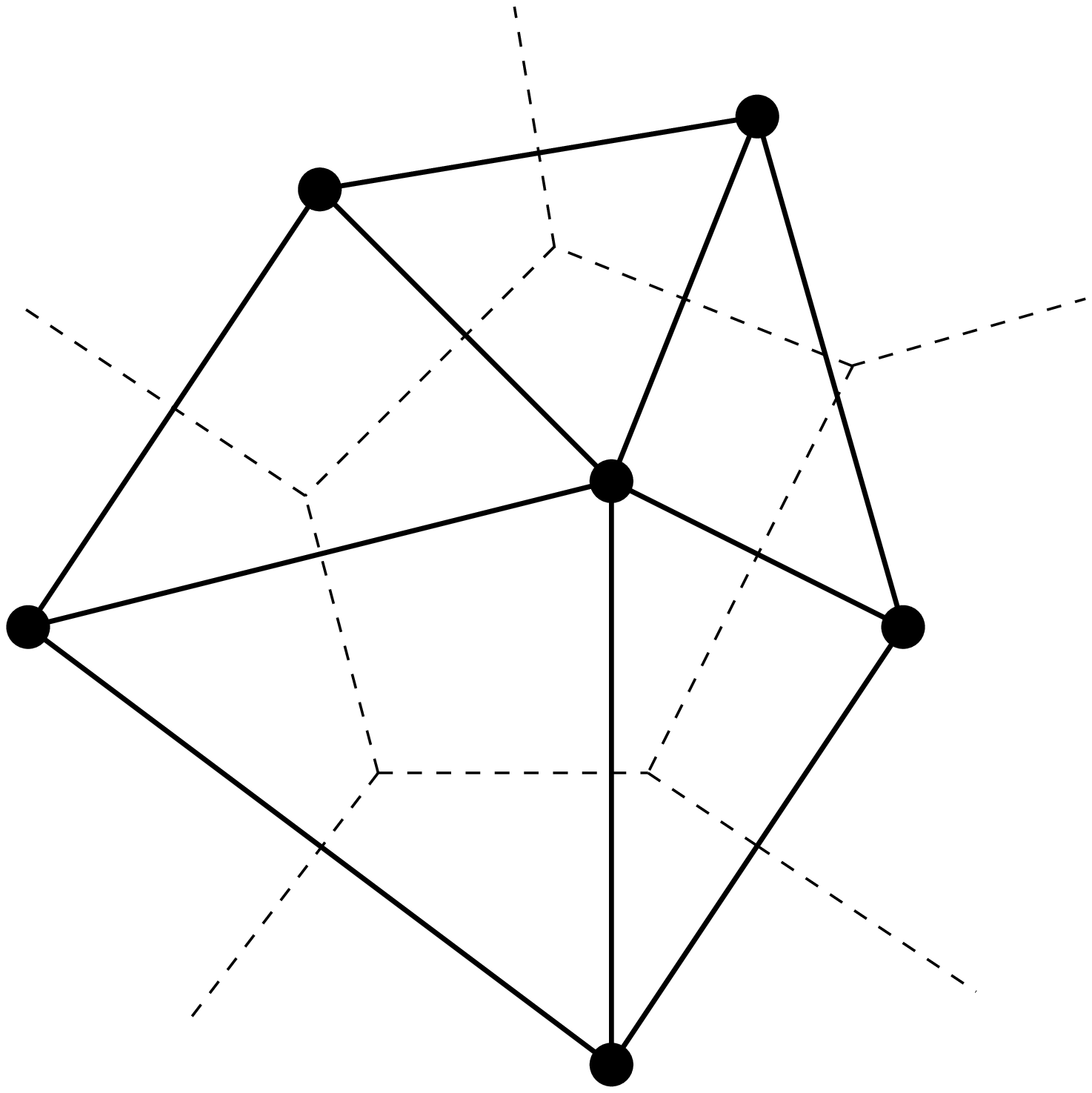}
\label{fig:del1}
}
\hspace{.1\textwidth}
\subfloat[]{
\includegraphics[width=.3\textwidth]{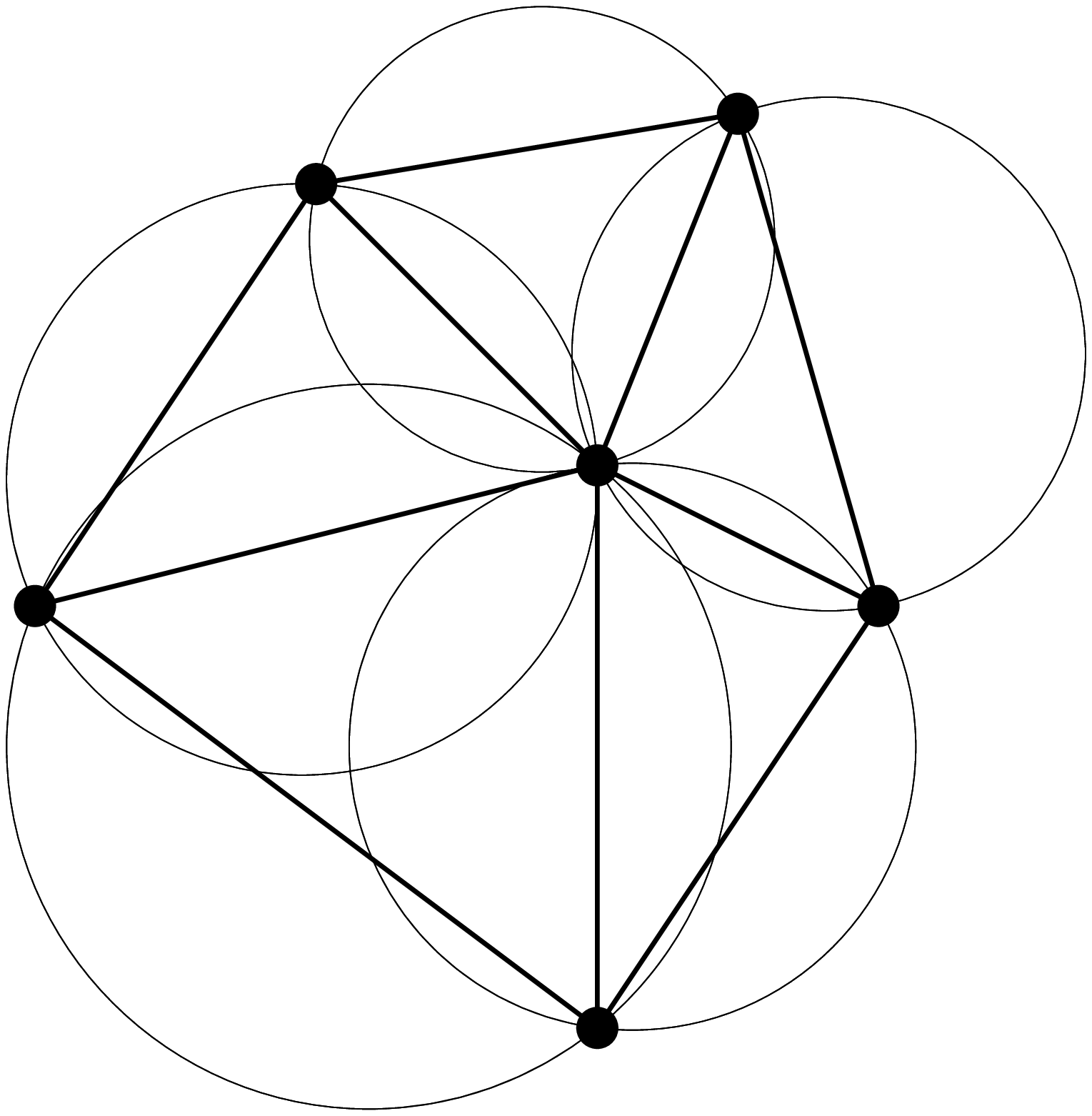}
\label{fig:del2}
}
\caption{(\textsc{a}) Voronoi diagram (dashed) and Delaunay triangulation (bold) for a set of points.
(\textsc{b}) Circumcircles to the elements of the triangulation.}\label{fig-del}
\end{figure}
For later reference, the definitions are given in a general case of a lattice in $\R^N$, $N\ge2$.
Let $\LL\subset\R^N$ be a countable set of points such that there exist $R,r>0$ with
$\inf_{x\in\R^N} \# \LL\cap B(x,R)\ge1$ and
$|x-y|\ge r$ for every $x,y\in\LL$, $x\neq y$,
where $B(x,R):=\{y\in\R^N\colon |x-y|<R\}$.
\begin{definition}[Voronoi cells] \label{def:Voronoi}
The Voronoi cell of a point $x\in\LL$ is the set
$$
C(x):= \{ z\in\R^N\colon |z-x|\le|z-y| \ \forall\, y\in\LL \} \,.
$$
The Voronoi diagram associated with $\LL$ is the partition $\{C(x)\}_{x\in\LL}$.
\end{definition}
The Voronoi diagram associated with a lattice is unique 
and determines a unique Delaunay pretriangulation.
\begin{definition}[Delaunay pretriangulation]\label{def:Del-pre}
The Delaunay pretriangulation associated with $\LL$ is a partition of $\R^N$ in open nonempty hyperpolyhedra with vertices in $\LL$,
such that two points $x,y\in\LL$ are vertices of the same hyperpolyhedra if and only if $C(x)\cap C(y)\neq\emptyset$.
\end{definition}
Notice that in the previous definition, two points $x,y$ are extrema of one of the edges of the same hyperpolyhedron 
if and only if $\H^{N-1}(C(x)\cap C(y))>0$, where $\H^{N-1}$ denotes the $(N{-}1)$-dimensional measure
(i.e., $C(x)$ and $C(y)$ have a common side if $N=2$ or a common facet if $N=3$).
The Delaunay pretriangulation is called Delaunay triangulation if all of its cells are $N$-simplices;
otherwise, the Delaunay pretriangulation can be refined in different ways in order to obtain a triangulation.
In both cases those triangulations satisfy the next property, which follows from~\cite[Property~V7]{Okabe}.
\begin{definition}[Delaunay property] \label{def:Del}
Let $\TT$ be a triangulation associated with $\LL$, i.e., a partition of $\R^N$ in open nonempty $N$-simplices with vertices in $\LL$.
We say that $\TT$ has the Delaunay property if, for every simplex of $\TT$, its circum-hypersphere contains no points of $\LL$ in its interior.
\end{definition}
In general, a lattice $\LL$ has a unique triangulation with the Delaunay property if and only if $\LL$ is nondegenerate according to the next definition
\cite[Property~D1]{Okabe}.
\begin{definition}[Nondegenerate lattice]
We say that $\LL\subset\R^N$ is nondegenerate if the following property holds:
if $x_1,\dots,x_{N+1}\in\LL$ are such that no points of $\LL$ lie in the interior of the circum-hypersphere of the simplex with vertices $x_1,\dots,x_{N+1}$,
then no further points of $\LL$ lie on that circum-hypersphere.
Otherwise the lattice is called degenerate.
\end{definition}
We now come back to the two-dimensional context.
In order to fix a triangulation of $\LL_\rho$ fulfilling the Delaunay property also in the degenerate case,
we determine concretely the Delaunay structures associated to a lattice of the type $\LL_\rho$.
Because of the definition of hexagonal Bravais lattice, it is clear that the possible degeneracies of $\LL_\rho$ are situated across the interface, i.e.,
the points in degenerate position are always among those of the type
$$
P_i:=-\w_1+i\w_2\,,\quad Q_j:=j\rho\w_2\quad \text{for}\ i,j\in\Z \,.
$$
The lattice $\LL_\rho$ is degenerate if and only if there exist $i,j\in\Z$ such that the points
$P_i$, $P_{i+1}$, $Q_j$, $Q_{j+1}$ lie on the same circle;
this happens if and only if $\rho=2i/(2j+1)$.
\par
\begin{figure}
\centering
\psfrag{pj}{$P_i$}
\psfrag{pj+}{\hspace{-.4cm}$P_{i+1}$}
\psfrag{qj}{$Q_j$}
\psfrag{qj+}{$Q_{j+1}$}
\subfloat[]{
\includegraphics[width=.3\textwidth]{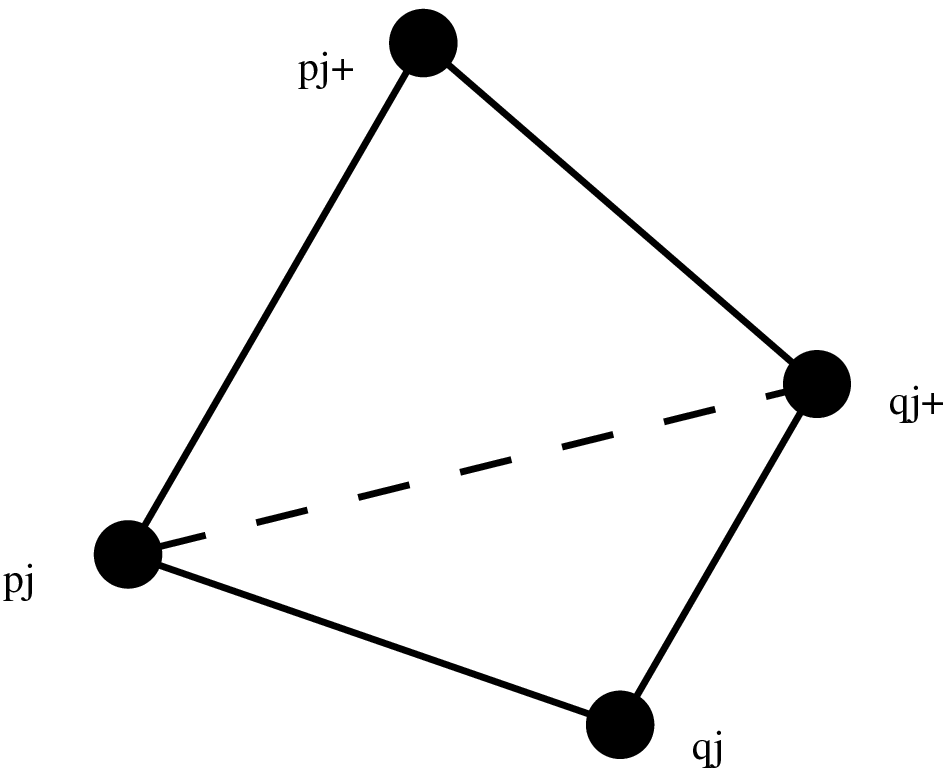}
\label{fig:degen1}
}
\hspace{.1\textwidth}
\subfloat[]{
\includegraphics[width=.3\textwidth]{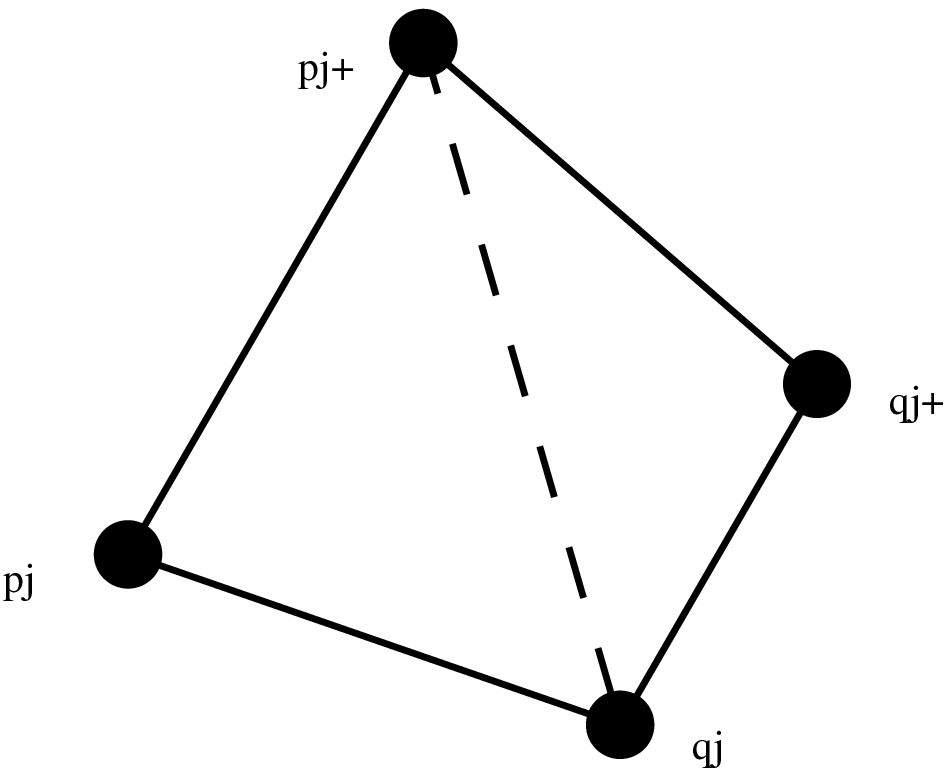}
\label{fig:degen2}
}
\caption{Delaunay triangulations associated with a degenerate configuration.}
\label{fig-degen}
\end{figure}
We are now in a position to fix a Delaunay triangulation associated with $\LL_\rho$.
If $\LL_\rho$ is nondegenerate, we fix $\TT_\rho$ to be the unique triangulation satisfying Definition~\ref{def:Del}.
In the opposite case, if $P_i$, $P_{i+1}$, $Q_j$, $Q_{j+1}$ are in degenerate position, there are two choices for the triangulation
of the isosceles trapezium $P_{i}P_{i+1}Q_jQ_{j+1}$, see Figure \ref{fig-degen}:
\begin{enumerate}
\item[(a)] $P_{i}Q_jQ_{j+1}$ and $P_iP_{i+1}Q_{j+1}$ are part of $\TT_\rho$, i.e., $P_i$ and $Q_{j+1}$ are neighbours and $P_{i+1}$ and $Q_{j}$ are not;
\item[(b)] $P_iP_{i+1}Q_j$ and $P_{i+1}Q_jQ_{j+1}$ are part of $\TT_\rho$, i.e., $P_{i+1}$ and $Q_{j}$ are neighbours and $P_{i}$ and $Q_{j+1}$ are not.
\end{enumerate}
Both the possibilities lead to a triangulation with the Delaunay property.
For every 4-tuple of points where this situation occurs, by convention we pick the triangulation (a); this fixes $\TT_\rho$.
Notice that $\dist(P_i,Q_{j+1})=\dist(P_{i+1},Q_j)$ since the lines $P_iP_{i+1}$ and $Q_jQ_{j+1}$ are parallel,
so the choice of the nearest neighbours cannot be based on a distance criterion.
\par
The definition of nearest neighbours follows.
\begin{definition}[Nearest neighbours]
Two points $x,y\in\LL_\rho$, $x\neq y$, are said to be nearest neighbours (and we write: $\NN{x}{y}$)
if they are vertices of one of the triangles of $\TT_\rho$.
\end{definition}
\begin{remark} \label{rmk:Del1}
From the construction of $\LL_\rho$ and $\TT_\rho$ it turns out that for $\rho<1$ two points $P_i$, $Q_j$ can interact only if $i-1<j\rho+\frac12<i+1$.
It follows that a point of type $P_i$ has at least two and at most $\lfloor2/\rho\rfloor+1$ neighbours of type $Q_j$;
a point of type $Q_j$ has at least one and at most two neighbours of type $P_i$
(where $\lfloor\cdot\rfloor$ denotes the integer part).
Moreover, a bond in the reference configuration is never longer than $\frac{\sqrt{7}}{2}$.
\par
Therefore, the total number of neighbours of a point of type $Q_j$ is either five or six.
In the former case, we say that the lattice has a dislocation at that point;
in the latter, we say that the point is regular, since six is the coordination number of the two-dimensional hexagonal Bravais lattice.
\end{remark}
\subsection{Reference configuration, admissible deformations, and interaction energy}
We now pass to bounded lattices. Given $L>0$, $\eps\in(0,1]$, and $k\in\N$, we define the parallelogram
\bes
\Om_{k\eps} := \{ \xi_1\w_1+\xi_2\w_2\colon \xi_1\in(-L,L)\,,\ \xi_2\in(0, k\eps) \} \,.
\ees
We introduce the discrete strip with lattice distance $\eps$,
\be \label{1307254}
\L_{\rho,\eps}(k):=(\eps\LL_\rho)\cap\overline\Om_{k\eps} \,,
\ee
see Figures \ref{fig-def-free} and \ref{fig-disl}.
In our model the material presents two phases:
therefore we define the subsets
\beas
\Om_{k\eps}^- &:=& \{ \xi_1\w_1+\xi_2\w_2\colon \xi_1\in(-L,0)\,,\ \xi_2\in(0, k\eps) \} \,,\\
\Om_{k\eps}^+ &:=& \{ \xi_1\w_1+\xi_2\w_2\colon \xi_1\in(0,L)\,,\ \xi_2\in(0, k\eps) \} \,,\\
\L_{1,\eps}^-(k) &:=& \{\xi_1\w_1+\xi_2\w_2\in\L_{\rho,\eps}(k)\colon \xi_1<0\} \,,\\
\L_{\rho,\eps}^+(k) &:=& \{\xi_1\w_1+\xi_2\w_2\in\L_{\rho,\eps}(k)\colon \xi_1\ge0\} \,.
\eeas
We define also
$$
\TT_{\rho,\eps}:=\{\eps T\colon T\in\TT_\rho\} \,.
$$
\par
\begin{figure}
\centering
\psfrag{L}{$2L$}
\psfrag{ke}{\hspace{-.5em}$k\eps$}
\psfrag{e}{\hspace{-.1em}$\eps$}
\psfrag{le}{\hspace{-.1em}$\eps$}
\includegraphics[width=.8\textwidth]{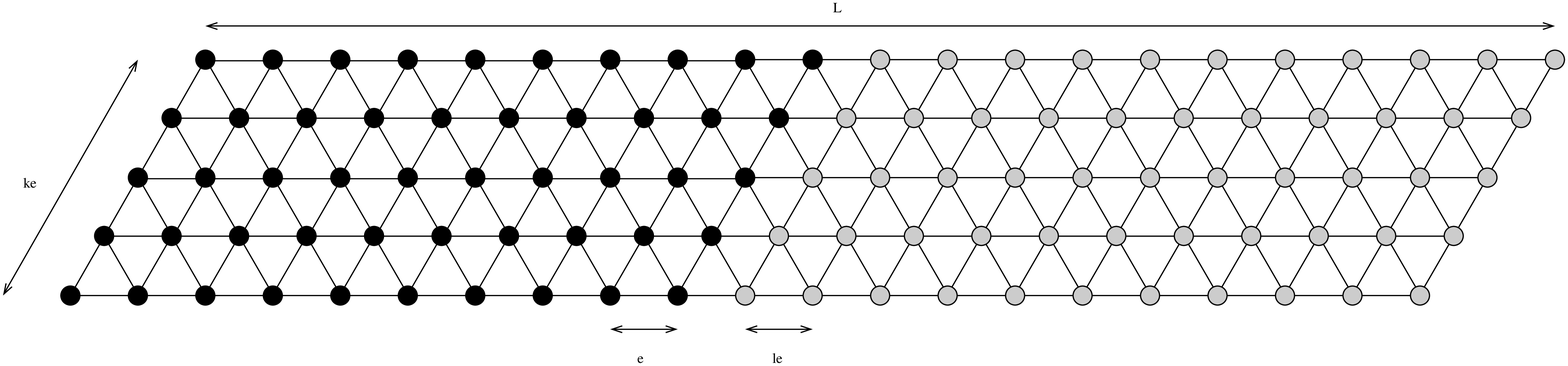}
\caption{A defect-free lattice of type $\LL_{1,\eps}(k)$.}
\label{fig-def-free}
\end{figure}
\begin{figure}
\centering
\psfrag{L}{$2L$}
\psfrag{ke}{\hspace{-.5em}$k\eps$}
\psfrag{e}{\hspace{-.1em}$\eps$}
\psfrag{le}{\hspace{-.4em}$\rho\eps$}
\includegraphics[width=.8\textwidth]{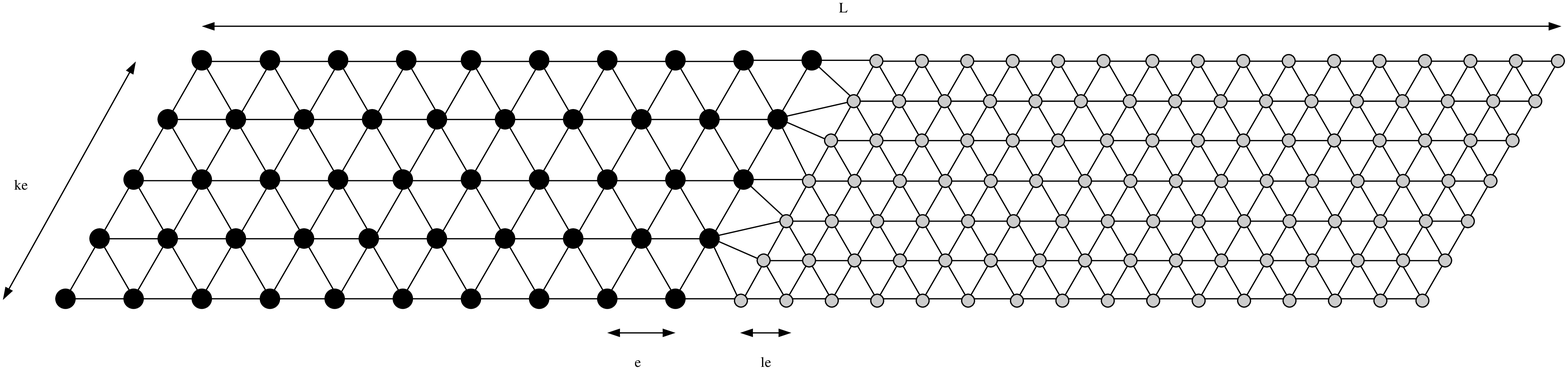}
\caption{A lattice with dislocations of type $\LL_{\rho,\eps}(k)$.}
\label{fig-disl}
\end{figure}
As for the nearest neighbours, we adopt the following notion: two points $x,y$ in any of the previous lattices
are nearest neighbours if $x/\eps$, $y/\eps$ fulfill the corresponding property in the lattice $\LL_\rho$.
\par
\begin{remark}\label{rmk:Del2}
The lattice $\L_{1,\eps}^-(k)$ contains $k+1$ lines of atoms parallel to $\w_1$, while
$\L_{\rho,\eps}^+(k)$ has $\lfloor k/\rho\rfloor+1$ lines.
The total number of dislocations of the lattice is then $\lfloor k/\rho\rfloor-k$,
which corresponds to the number of points $x\in\L_{\rho,\eps}^+(k)$
such that $x/\eps$ has five neighbours in $\LL_\rho$, see Remark \ref{rmk:Del1}.
\end{remark}
In the sequel of the paper we will often consider the rescaled domain $\frac{1}{\eps}\Om_{k\eps}$,
which converges, as $\eps\to0^+$, to the unbounded strip
\be\label{1308061}
\Om_{k,\infty}:= \{ \xi_1\w_1+\xi_2\w_2\colon \xi_1\in(-\infty,+\infty)\,,\ \xi_2\in(0,k) \} = \R\times\left(0,\tfrac{\sqrt3}{2}k\right) \,.
\ee
We define the associated lattice with lattice distances 1 and $\rho$,
\be
\L_{\rho,\infty}(k):=\LL_\rho\cap\overline\Om_{k,\infty} \,.
\ee
Also the infinite strip is divided into two subsets: 
\bea \label{1307231}
\L_{1,\infty}^-(k) &:=& \{\xi_1\w_1+\xi_2\w_2\in\L_{\rho,\infty}(k)\colon \xi_1<0\} \,,\\  \label{1307232}
\L_{\rho,\infty}^+(k) &:=& \{\xi_1\w_1+\xi_2\w_2\in\L_{\rho,\infty}(k)\colon \xi_1\ge0\} \,.
\eea
As before, two points in the previous lattices are said to be nearest neighbours if they are such in the lattice $\LL_\rho$.
\par
Every deformation $u_\eps$ of the lattice $\LL_{\rho,\eps}(k)$ is extended
by piecewise affine interpolation with respect to the triangulation $\TT_{\rho,\eps}$.
The set of admissible deformations is then
\be \label{adm}
\begin{split}
\A_{\rho,\eps}(\Om_{k\eps}):= \big\{ u_\eps\in C^0(\overline\Om_{k\eps};\R^2) \colon & u_\eps \ \text{piecewise affine,}\\ 
& \D u_\eps \ \text{constant on}\ \Om_{k\eps}\cap T\ \forall\, T\in\TT_{\rho,\eps}, \\
& \det \D u_\eps>0 \ \text{a.e.\ in}\ \Om_{k\eps} \big\} \,.
\end{split}
\ee
With a slight abuse of notation, the restriction of $u_\eps\in\A_{\rho,\eps}(\Om_{k\eps})$ to $\L_{\rho,\eps}(k)$ is still denoted by $u_\eps$.
As for the domain $\Om_{k,\infty}$, we define in an analogous way
\be\label{130806}
\begin{split}
\A_{\rho,\infty}(\Om_{k,\infty}):= \big\{ u\in C^0(\overline\Om_{k,\infty};\R^2) \colon & u \ \text{piecewise affine,}\\ 
& \D u \ \text{constant on}\ \Om_{k,\infty}\cap T\ \forall\, T\in\TT_{\rho}, \\
& \det \D u>0 \ \text{a.e.\ in}\ \Om_{k,\infty} \big\} \,.
\end{split}
\ee
In the last definitions we imposed a non-interpenetration condition: the Jacobian determinant is positive, 
so the deformations preserve the orientation. This is a usual requirement in the mechanics of atomistic systems  \cite{BSV,FT}, 
see also Figure~\ref{fig-nonint}.
\par
\begin{figure}[b]
\centering
\includegraphics[width=.6\textwidth]{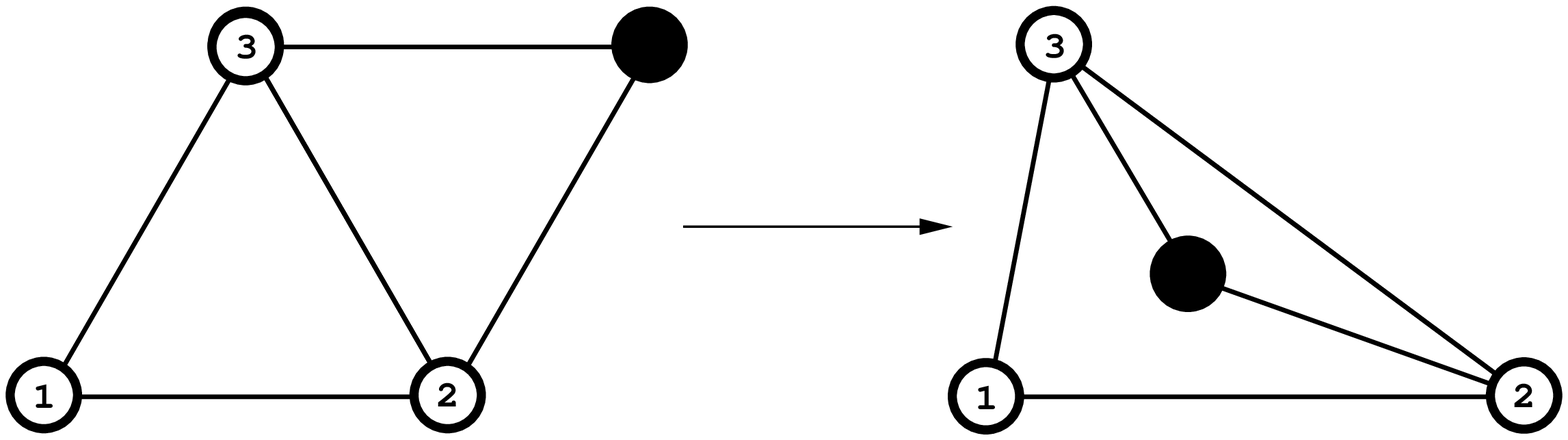}
\caption{This deformation of a system of four atoms is ruled out by the non-interpenetration condition,
since the induced piecewise affine extension has negative Jacobian determinant in one of the triangles.
No physical condition would prevent the black atom from being in the displayed position;
however, in such a case the nearest neighbours should be redefined, which is why this situation is excluded.}
\label{fig-nonint}
\end{figure}
\par
In the following we will consider also a scaling of the domain $\Om_{k\eps}$ to the domain $\Om_k$, which is independent of $\eps$.
Hence, given $u_\eps\in\A_{\rho,\eps}(\Om_{k\eps})$ we set
$\ut_\e (x):= u_\e(A_\e x)$, where 
$$
\dsp A_\e:= 
\begin{pmatrix}
1 & \frac{\e-1}{\sqrt{3}}
\\
0 & \e
\end{pmatrix}
\quad \text{and thus} \quad
\dsp A_\e^{-1}:= 
\begin{pmatrix}
1 & \frac{1-\e}{\eps\sqrt{3}}
\\[.5em]
0 & \frac1\e
\end{pmatrix} \,.
$$
The corresponding set of admissible deformations is
\be \label{tildeA}
\begin{split}
\widetilde\A_{\rho,\eps}(\Om_{k}):= \big\{ \ut_\eps\in C^0(\overline\Om_{k};\R^2) \colon & \ut_\eps \ \text{piecewise affine,}\\ 
& \D \ut_\eps \ \text{constant on}\ \Om_{k}\cap (A_\eps^{-1}T)\ \forall\, T\in\TT_{\rho,\eps}\,, \\
& \det \D\ut_\eps>0 \ \text{a.e.\ in}\ \Om_{k} \big\} \,.
\end{split}
\ee
\par
We are finally in a position to define the energy associated to a deformation $u_\eps\in\A_{\rho,\eps}(\Om_{k\eps})$.
We consider harmonic interactions between nearest neighbours:
\be\label{eng-eps}
\E_{\eps}^\lambda(u_\eps,\rho,k) :=
\tfrac{1}{2}\!\!\! \sum_{\substack{\NN{x}{y}\\x\in \L_{1,\eps}^-(k) \\y\in\L_{\rho,\eps}(k)}}
\left(\Big|\frac{u_\eps(x)-u_\eps(y)}{\e}\Big|-1\right)^2 +
\tfrac{1}{2}\!\!\! \sum_{\substack{\NN{x}{y}\\x\in \L_{\rho,\eps}^+(k)\\y\in\L_{\rho,\eps}(k)}} 
\left(\Big|\frac{u_\eps(x)-u_\eps(y)}{\e}\Big|-\lambda\right)^2 \,,
\ee
where $\lambda\in(0,1)$ is the ratio between the equilibrium lengths of the bonds between atoms in $\L_{\rho,\eps}^+(k)$ and $\L_{1,\eps}^-(k)$, respectively.
For more general assumptions, see Section \ref{sec:rmk}.
Analogously, for $u\in\A_{\rho,\infty}(\Om_{k,\infty})$ we define
\begin{equation}\label{eng-infty}
\E_{\infty}^\lambda(u,\rho,k) := 
\tfrac{1}{2}\!\!\! \sum_{\substack{\NN{x}{y}\\x\in \L_{1,\infty}^-(k) \\y\in\L_{\rho,\infty}(k)}} 
\left(| u(x)-u(y)|-1\right)^2 +
\tfrac{1}{2}\!\!\! \sum_{\substack{\NN{x}{y}\\x\in \L_{\rho,\infty}^+(k) \\y\in\L_{\rho,\infty}(k)}} 
\left(| u(x)-u(y)|-\lambda\right)^2 \,.
\end{equation}
\par
The interaction energy between two nearest-neighbouring atoms $x\in\L_{1,\eps}^-(k)$ and $y\in\L_{\rho,\eps}^+(k)$ 
subject to a deformation $u_\eps$ is then
\be\label{eng-int}
\tfrac{1}{2}\left(\Big|\frac{u_\eps(x)-u_\eps(y)}{\eps}\Big|-1\right)^2+\tfrac{1}{2}\left(\Big|\frac{u_\eps(x)-u_\eps(y)}{\eps}\Big|-\lambda\right)^2 \,,
\ee
with minimum for $\Big|\frac{u_\eps(x)-u_\eps(y)}{\eps}\Big|=\frac{1+\lambda}{2}$.
Remark that the interaction energy of the interfacial bonds is strictly positive. 
Possible generalisations are discussed in Section \ref{sec:rmk}.
\par
\begin{remark}\label{rmk:abc}
We highlight here the difference between the scaling of our functional and the one analysed in \cite{ABC}.
The integral representation derived in that work applies to the limit of  the functionals $\W_{\eps}(u_\eps):=\eps\E_{\eps}^\lambda(u_\eps,1,k)$,
for fixed values of $\lambda$ and $k$ and $\rho=1$, i.e., for a defect-free lattice.
In that approach, the non-interpenetration condition is not assumed.
By computing the energy of a deformation that coincides with the identity on $\L_{1,\eps}^-(k)$
and with a homothety of ratio $\lambda$ on $\L_{\rho,\eps}^+(k)$,
it is immediate to see that $\min\W_\eps=O(\eps)$.
Hence, to characterise the cost of transitions from one equilibrium to the other,
we rescale $\W_\eps$ by a factor $\eps^{-1}$, obtaining $\E_\eps^\lambda(\cdot,1,k)$.
The $\Ga$-limit of $\E_\eps^\lambda(\cdot,1,k)$ can be regarded as the first-order $\Ga$-limit of $\W_\eps$.
\end{remark}
\section{The minimum cost of a deformation}\label{sec:minimum}
In this section we study the minimum cost of the deformations of the rescaled lattice $\L_{\rho,\infty}(k)$. 
\par
Henceforth, the letter $C$ denotes various positive constants whose 
precise value may change from place to place. Its dependence on variables will be 
emphasised only if necessary.
For $N=2,3$, the symbol $\Mnn$ stands for the set of real $N{\times} N$ matrices.
We denote by $GL^+(N)$ the set of matrices with positive determinant
and by $SO(N)$ the set of rotation matrices.
\subsection{Discrete rigidity}
We state a discrete rigidity estimate valid on the triangular cells of the lattice.
This allows one to apply a well-known result of Friesecke, James, and M\"uller \cite{fjm}, which will be employed throughout the paper.
Applications of the rigidity estimate in discrete systems can be found e.g.\ in \cite{BSV,Schm06,Th06}.
\begin{theorem}\label{thm-rigidity}
\cite[Theorem 3.1]{fjm}
Let $N\geq 2$, and let $1\leq p < \infty$.
Suppose that $U\subset\R^{N}$ is a bounded Lipschitz domain. 
Then there exists a constant $C=C(U)$
such that for each $u\in W^{1,p}(U;\R^{N})$ 
there exists a constant matrix $R\in SO(N)$ such that 
\begin{equation}\label{rigidity}
\|\D  u-R\|_{L^{p}(U;\Mnn)} \leq C(U)
\|\dist(\D  u,SO(N))\|_{L^{p}(U)}\,.
\end{equation}
The constant $C(U)$ is invariant under dilation and translation of the domain. 
\end{theorem}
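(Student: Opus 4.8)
Since the statement is quoted verbatim from \cite{fjm}, what follows is a sketch of the strategy I would follow rather than a self-contained argument. The plan is to reduce everything to the model case $p=2$ on a fixed cube $Q\subset\R^N$ and then to recover the general exponent and the general Lipschitz domain by truncation and by a covering argument. The dilation and translation invariance of the constant is the easiest point, and I would dispose of it first: if $v(x):=u(tx+b)$ then $\D v(x)=t\,\D u(tx+b)$, so under the change of variables both sides of \eqref{rigidity} pick up the same power of $t$ and the same $R$ serves; hence the size and position of the domain may be fixed once and for all.

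The difficulty is entirely due to the nonlinearity of the constraint manifold $SO(N)$, whose linear counterpart is Korn's second inequality: for $v\in W^{1,2}(Q;\R^N)$ there is a skew-symmetric $A$ with $\norm{\D v-A}{L^2}\le C(Q)\,\norm{\mathrm{sym}\,\D v}{L^2}$. The core estimate I would establish is a \emph{small-strain} (thin-tube) version of \eqref{rigidity}: there are $\delta,C>0$ such that if $\dist(\D u,SO(N))\le\delta$ almost everywhere, then \eqref{rigidity} holds on $Q$ for $p=2$. To prove it I would use that near $SO(N)$ the nonlinear distance is comparable to its linearization: via the polar decomposition $F=R(F)\,U$ with $R(F)\in SO(N)$ and $U$ symmetric positive definite one has $\dist(F,SO(N))=|U-I|$, and for $F$ close to a rotation this is comparable to the symmetric part of $F$ measured against that rotation. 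Feeding this comparison into Korn's inequality produces a single constant $R$. \emph{This thin-tube estimate is where I expect the real work to lie}: one must show that pointwise proximity to the whole manifold $SO(N)$ forces proximity to one fixed rotation, and control the quadratic remainder of the linearization uniformly so that it can be absorbed; this is the genuine heart of the argument.

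With the thin-tube estimate in hand, the $L^\infty$-smallness assumption would be removed by a Lipschitz truncation. Setting $f:=\dist(\D u,SO(N))$ and fixing a level $\lambda$, a Lipschitz truncation based on the maximal function splits $Q$ into a good set, where $\D u$ stays within $C\lambda$ of $SO(N)$, and a bad set of measure $\le C\lambda^{-2}\int_{\{f>\lambda/2\}}f^2$; on this decomposition one constructs $w$ agreeing with $u$ on the good set, globally Lipschitz with $\dist(\D w,SO(N))\le C\delta$, and with $\norm{\D u-\D w}{L^2}^2\le C\int_{\{f>\lambda/2\}}f^2$. Applying the thin-tube estimate to $w$ gives a single $R$, and the triangle inequality $\norm{\D u-R}{L^2}\le\norm{\D u-\D w}{L^2}+\norm{\D w-R}{L^2}$ closes the $p=2$ case. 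The same truncation, with $L^p$ in place of $L^2$ and the $L^p$-boundedness of the maximal operator for $p>1$ (a weak-type argument handling $p=1$), yields every exponent.

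Finally I would pass from the cube to an arbitrary bounded Lipschitz domain $U$ by covering $U$ with finitely many overlapping cubes adapted to its Lipschitz charts, applying the cube estimate on each to obtain rotations $R_1,\dots,R_m$, and patching: on each nonempty overlap the difference $|R_i-R_j|$ is bounded, through the estimate on both cubes, by the right-hand side of \eqref{rigidity}, so that after replacing all the $R_i$ by a common $R$ the errors sum to a single constant depending only on the combinatorics and geometry of the cover, hence on $U$.
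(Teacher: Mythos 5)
There is a genuine gap, and it sits exactly where you flagged ``the real work''. First, note that the paper itself contains no proof of this statement: it is imported verbatim from \cite{fjm}, so the benchmark is the Friesecke--James--M\"uller argument. Your small-strain step via polar decomposition plus Korn is circular as described. Writing $\nabla u(x)=R(x)U(x)$ with $\operatorname{dist}(\nabla u,SO(N))=|U-I|\le\delta$ gives a rotation field $R(x)$ that varies from point to point, while Korn's second inequality controls $\|\nabla v-A\|_{L^2}$ by the symmetric part of $\nabla v$ \emph{in a fixed frame}: $\mathrm{sym}\,\nabla u$ is close to $\mathrm{sym}\,R(x)$, which is of order one, not of order $\delta$. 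The comparison $\operatorname{dist}(F,SO(N))\sim|\mathrm{sym}(R_0^{\mathrm T}F)-I|$ is valid only once $F$ lies in a fixed small neighbourhood of a \emph{single} rotation $R_0$ --- but passing from ``pointwise close to the manifold $SO(N)$'' to ``close to one fixed rotation'' is precisely the content of the theorem, so Korn cannot be the engine that produces $R$. What \cite{fjm} actually use instead is the Piola identity: since $\operatorname{cof}R=R$ on $SO(N)$, for $|\nabla u|\le M$ one has $|\nabla u-\operatorname{cof}\nabla u|\le C(M)\operatorname{dist}(\nabla u,SO(N))$, and since $\operatorname{div}\operatorname{cof}\nabla u=0$ the map $u$ is quantitatively almost harmonic; interior estimates for harmonic functions then yield closeness of $\nabla u$ to a constant matrix on interior cubes \emph{with no loss of exponent}, which is the gain a naive chaining of local rotations does not provide. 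A Whitney-type covering combined with a weighted Poincar\'e inequality then upgrades the interior estimate to the full Lipschitz domain.

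Your truncation step is also misstated in a way that matters. The maximal-function (Lipschitz) truncation produces $w$ with $\|\nabla w\|_{L^\infty}\le C\lambda$ and a measure bound on $\{u\ne w\}$; it does \emph{not} give $\operatorname{dist}(\nabla w,SO(N))\le C\delta$ on the modified set, where one only controls the Lipschitz constant, so $\nabla w$ may sit at distance of order $\lambda$ from $SO(N)$. In \cite{fjm} the truncation is used only to reduce to $\|\nabla u\|_{L^\infty}\le M$, and the core proposition is proved for bounded gradients without any pointwise smallness, via a dichotomy: if $\int_Q\operatorname{dist}^2(\nabla u,SO(N))\,dx$ is comparable to $|Q|$ the estimate is trivial from boundedness, and otherwise $L^2$-smallness suffices for the harmonic approximation. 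Your remaining ingredients --- scaling/translation invariance of the constant, the passage from $p=2$ to general $p$ by truncation and maximal-function bounds, and the patching of rotations over a finite cover (estimating $|R_i-R_j|$ on overlaps by the right-hand side of \eqref{rigidity}) --- are sound and standard, but without a correct replacement for the thin-tube step the sketch defers the entire theorem to an unproven (and, as formulated, false) claim.
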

In order to apply Theorem \ref{thm-rigidity} to our discrete setting we will need the following lemma, asserting that the function in
\eqref{cell-energy} is bounded from below by the distance from the set of rotations. 
We adopt here the following notation for the elastic energy
corresponding to an affine deformation $F\in GL^+(2)$ of a cell:
\begin{equation}\label{cell-energy}
\E(F):= \sum_{i=1}^3( |F\w_i|-1)^2 \,.
\end{equation}
\begin{lemma}\label{lemma-equiv}
There exists $C>0$ such that 
\begin{equation}\label{bound-from-below}
\dist^2(F,SO(2)) \leq  C \E(F)  \quad \text{for every} \ F\in GL^+(2) \,. 
\end{equation}
\end{lemma}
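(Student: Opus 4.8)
The plan is to show that $\E$ vanishes on $GL^+(2)$ exactly on $SO(2)$, to quantify this near $SO(2)$ by linearisation, to control the growth of $\E$ at infinity, and finally to glue the two regimes together by a compactness/contradiction argument; the non-compactness of $GL^+(2)$ is the main difficulty.

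First I would record the algebraic backbone. Since $|R\w_i| = |\w_i|$ for $R \in SO(2)$, both $\E$ and $F \mapsto \dist^2(F, SO(2))$ are invariant under $F \mapsto RF$. Writing the polar decomposition $F = RU$ with $U$ symmetric positive definite, it therefore suffices to prove
\[
|U - I|^2 \le C \sum_{i=1}^3 (|U\w_i| - 1)^2 \quad \text{for every symmetric positive definite } U,
\]
because $\dist^2(F, SO(2)) = |U - I|^2$. Setting $G := U^2 = U\transp U$ and expanding $|U\w_i|^2 = \w_i\transp G \w_i$, the conditions $|U\w_i| = 1$ for $i=1,2,3$ become three linear equations for the three entries of the symmetric matrix $G$; a direct computation with $\w_1,\w_2,\w_3$ shows the only solution is $G = I$, hence $U = I$. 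Thus $\E(F) = 0$ on $\overline{GL^+(2)}$ forces $F \in SO(2)$, and the homogeneous version of the same computation shows the linear map $H \mapsto (\w_i\transp H \w_i)_{i=1,2,3}$ on symmetric matrices $H$ is injective.

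Next comes the local estimate. Writing $U = I + H$ with $H$ symmetric, one has $|U\w_i| - 1 = \w_i\transp H\w_i + O(|H|^2)$, so $\E(U) = \sum_i (\w_i\transp H\w_i)^2 + O(|H|^3)$. By the injectivity just noted and finite-dimensionality there is $c > 0$ with $\sum_i (\w_i\transp H\w_i)^2 \ge c|H|^2$ for all symmetric $H$; hence $\E(U) \ge \tfrac{c}{2}|H|^2 = \tfrac{c}{2}|U - I|^2$ whenever $|U - I| \le \delta$ is small enough, which is the claimed inequality in a neighbourhood of $SO(2)$.

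For the global part I would use the identity $\sum_{i=1}^3 \w_i \otimes \w_i = \tfrac32 I$, which gives $\sum_i |U\w_i|^2 = \tfrac32 |U|^2$; combined with Cauchy--Schwarz this yields a coercivity bound $\E(U) \ge \tfrac12 |U|^2$ once $|U|$ is large, while trivially $\dist^2(U, SO(2)) = |U - I|^2 \le (|U| + \sqrt2)^2$, so the ratio stays bounded for large $|U|$. The two regimes are then combined by contradiction: if no $C$ worked, one could pick symmetric positive definite $U_n$ with $\dist^2(U_n, SO(2)) > n\,\E(U_n)$; the coercivity bound rules out $|U_n| \to \infty$, so $U_n$ is bounded and, up to a subsequence, converges to a positive semidefinite $U_*$ with $\E(U_*) = 0$, whence $U_* = I$ by the algebraic fact, so eventually $U_n$ lies in the neighbourhood where the local estimate holds---contradicting $\dist^2 > n\,\E$ for large $n$. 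The main obstacle is precisely this non-compactness of $GL^+(2)$, both at infinity and along the degenerate boundary $\det = 0$: the argument goes through because the triangular geometry makes the linearised length constraints injective (local rigidity) and $\sum_i \w_i \otimes \w_i$ isotropic (coercivity), and because $\E = 0$ forces $\det = 1$, so the degenerate boundary never competes.
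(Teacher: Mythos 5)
Your proof is correct, and while it shares the paper's overall three-regime architecture (a quadratic estimate near $SO(2)$, a crude bound where $\E$ is large, and compactness in between), the technical route is genuinely different. The paper mods out rotations by normalising $F\w_1=(1+\delta_1)\w_1$, bounds $\dist^2(F,SO(2))\le |F-I|^2$, and computes $|(F-I)\w_2|^2$ explicitly via the law of cosines, the angle $\theta$ of $F\w_2$ being determined by the three stretches $\delta_i$ together with the orientation condition $\sin\theta>0$; its local estimate is a second-order Taylor expansion of that explicit trigonometric formula, the regime $\E(F)\ge 1$ is absorbed by $\dist^2(F,SO(2))\le C(1+\delta_1^2+\delta_2^2)\le 2C\,\E(F)$, and the intermediate regime is closed ``by continuity'', i.e.\ the same compactness you invoke. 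You instead pass to the polar decomposition $F=RU$ and the metric tensor $G=U^2$, replacing the trigonometry by two linear-algebraic facts, both of which check out: the map $H\mapsto(\w_1\transp H\w_1,\,\w_2\transp H\w_2,\,\w_3\transp H\w_3)$ on symmetric $2{\times}2$ matrices is represented by a $3{\times}3$ matrix with determinant $\tfrac{3\sqrt3}{4}\neq 0$ (this gives your local estimate and also shows $\E=0$ forces $G=I$, so the degenerate boundary $\det=0$ never competes), and $\sum_{i=1}^3\w_i\otimes\w_i=\tfrac32 I$ (this gives coercivity at infinity). Note that both arguments spend the hypothesis $F\in GL^+(2)$ at the analogous spot: the paper through $\sin\theta>0$, you through $R\in SO(2)$ and the identity $\dist^2(F,SO(2))=|U-I|^2$, which fails for $\det F<0$. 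What your route buys is coordinate-independence and portability: injectivity of the linearised length map plus an isotropy identity $\sum_i\w_i\otimes\w_i=cI$ is exactly what the three-dimensional analogue for the face-centred cubic vectors (Lemma~\ref{lemma:tetra}, for which the paper only asserts ``a similar proof'') requires, whereas the planar trigonometric computation does not carry over verbatim. What the paper's route buys is a short, fully explicit two-dimensional computation that avoids polar decomposition and the singular-value characterisation of $\dist(\cdot,SO(2))$.
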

\begin{proof}
Set $\delta_i:= |F \w_i| -1$ and $\delta:=(\delta_1,\delta_2,\delta_3)$, then $\E(F) = \sum_{i=1}^3 \delta_i^2=|\delta|^2$.
Without loss of generality we may assume that $F \w_1 =(1+\delta_1)\w_1$ as in Figure~\ref{fig-triangle}.  
\begin{figure}[b]
\centering
\psfrag{w1}{$\w_1$}
\psfrag{w2}{$\w_2$}
\psfrag{d1}{$\delta_1$}
\psfrag{t}{$\theta$}
\psfrag{f2}{$F\w_2$}
\includegraphics[scale=1.2]{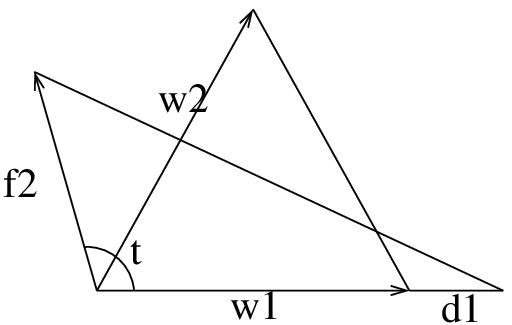}
\caption{Notation for Lemma \ref{lemma-equiv}.}
\label{fig-triangle}
\end{figure}
We have
\begin{equation}\label{dist-est-2}
\dist^2(F,SO(2)) \leq |F-I|^2 \leq C \big( | (F-I) \w_1 |^2 +   |(F-I)\w_2 |^2  \big) 
=C \big(\delta_1^2  + | (F-I)\w_2 |^2  \big)
\end{equation}
and
\be\label{dist-estimate}
|(F-I)\w_2 |^2 = 1 + (1+\delta_2)^2 -2(1+\delta_2)\cos\big(\theta - \tfrac{\pi}{3}\big) \,,
\ee
where $\theta$ is the angle (measured anticlockwise) between $\w_1$ and $F\w_2$,
which is determined by
\be\label{dist-estimate2}
\cos\theta = 
\frac{(1+\delta_1)^2  +  (1+\delta_2)^2 - (1+\delta_3)^2 }{ 2 (1+\delta_1)(1+\delta_2)}
\quad \text{and} \quad \sin\theta >0 \,.
\ee
Remark that the condition $\sin\theta>0$ follows from the assumption $F\in GL^+(2)$.
From \eqref{dist-est-2} and \eqref{dist-estimate} we deduce that 
\begin{equation}\label{big-delta}
\dist^2(F,SO(2)) \leq C(1+ \delta_1^2 + \delta_2^2) \,.
\end{equation}
On the other hand, by computing the second order Taylor expansion of \eqref{dist-estimate} about the point $\delta=(0,0,0)$ and taking into account \eqref{dist-estimate2} we see that 
\begin{equation}\label{small-delta}
\dist^2(F,SO(2)) \leq C |\delta|^2 + o(|\delta|^2) \,,  
\end{equation}
all first derivatives being zero at $(0,0,0)$.
Then \eqref{bound-from-below} readily follows from \eqref{small-delta} for $\E(F)$ small,
from \eqref{big-delta} for values of $\E(F)$ larger than one,
and by continuity in the intermediate case.
\end{proof}
\begin{remark}\label{rmk-equiv}
Arguing as above, one sees that
$$
\dist^2(F,\lambda \, SO(2)) \leq  C \sum_{i=1}^3( |F\w_i|-\lambda)^2
$$
for every $F\in GL^+(2)$.
\end{remark}
\subsection{Estimates on the cost of defect-free and dislocated deformations} \label{sec:gamma}
We now introduce the minimum cost of a deformation of the rescaled lattice $\L_{\rho,\infty}(k)$.
To this end, we consider deformations $v\in\A_{\rho,\infty}(\Om_{k,\infty})$ that are in equilibrium away from the interface,
i.e., such deformations $v$ bridge the two wells $SO(2)$ and $\frac\lambda\rho SO(2)$ of the lattice energy \eqref{eng-infty} around the interface.
Because of the rotational invariance, we may assume that, for some $M>0$ and $R\in SO(2)$, $\D v=I$ if $x_1\in(-\infty,-M)$
and $\D v=\frac\lambda\rho R$ if $x_1\in(M,+\infty)$. We recall that the admissible deformations are piecewise affine
on the triangulation determined by $\L_{\rho,\infty}(k)$.
\par
Therefore, given $R\in SO(2)$, $\rho\in(0,1]$, and $k\in\N$, we define the 
minimum cost of a transition from one equilibrium to the other as
\begin{equation}\label{gammah}
\begin{split}
\ga^\lambda(\rho,k,R):=\inf\big\{ & \E_{\infty}^\lambda(v,\rho,k) \colon M>0\,,
\  v\in \A_{\rho,\infty}(\Om_{k,\infty})\,, \\
& \D v=I \ \text{for} \ x_1\in(-\infty,-M)\,,\, \ \D v=\tfrac\lambda\rho R \ \text{for} \ x_1\in(M,+\infty) 
\big\}\,.
\end{split}
\end{equation}
In Section \ref{sec:lim} we will show that $\ga^\lambda(\rho,k,R)$ enters in the $\Ga$-limit of the total interaction energy
as the atomic distance tends to zero. Here we investigate the behaviour of the minimal cost
for defect-free and dislocated configurations.
\par
In the following proposition we prove that \eqref{gammah} is actually independent of the choice of the rotation $R$.
Hence, we will write
$$
\ga^\lambda(\rho,k):=\ga^\lambda(\rho,k,I)\,.
$$
This means that the growth direction of the nanowire is not captured at the scaling of the functional considered here.
Moreover, the following proof shows that the specimen is not sensitive to bending in our model.
\begin{proposition}\label{invariance} 
For every $R\in SO(2)$ we have 
$$
\ga^\lambda(\rho,k,R)=\ga^\lambda(\rho,k,I)\,.
$$
\end{proposition}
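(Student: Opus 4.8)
The plan is to prove the equality by showing two inequalities, exploiting the rotational invariance of the energy. The key observation is that the energy $\E_\infty^\lambda(v,\rho,k)$ in \eqref{eng-infty} depends only on the lengths $|u(x)-u(y)|$ of the deformed bonds, hence it is invariant under left-composition with any rigid rotation: for any $Q\in SO(2)$, one has $\E_\infty^\lambda(Qv,\rho,k)=\E_\infty^\lambda(v,\rho,k)$. I would use this to transport admissible competitors for the problem defining $\ga^\lambda(\rho,k,R)$ to competitors for $\ga^\lambda(\rho,k,I)$, and vice versa.

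First I would prove $\ga^\lambda(\rho,k,R)\le\ga^\lambda(\rho,k,I)$. Take an arbitrary admissible competitor $v$ for the problem \eqref{gammah} defining $\ga^\lambda(\rho,k,I)$, so $\D v=I$ for $x_1<-M$ and $\D v=\frac\lambda\rho I$ for $x_1>M$. The naive idea of setting $Rv$ fixes the right-hand boundary condition, since $\D(Rv)=\frac\lambda\rho R$ for $x_1>M$, but it spoils the left-hand condition, giving $\D(Rv)=R\neq I$ there. The remedy is to interpolate: I would construct a new deformation $\tv$ that equals $Rv$ (up to translation) on the right, equals the identity on the far left, and smoothly bridges the two rotations $I$ and $R$ in the region $x_1<-M$, away from the interface. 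Since $SO(2)$ is connected, one can rotate continuously from $I$ to $R$; crucially, this bridging happens in a region where all bonds belong to the defect-free phase $\LL_1^-$ with equilibrium length $1$, and one can perform the rotation by bending through a sequence of cells whose energy contribution can be made arbitrarily small. This is the same mechanism the authors allude to when they remark that ``the specimen is not sensitive to bending''.

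The main obstacle is precisely making this bending argument rigorous in the discrete, piecewise-affine setting with the non-interpenetration constraint $\det\D\tv>0$. One must exhibit, for every $\eta>0$, an explicit interpolating deformation on a sufficiently long strip $x_1\in(-M',-M)$ that (i) matches $I$ at $x_1=-M'$ and the rotated data at $x_1=-M$, (ii) stays in $\A_{\rho,\infty}(\Om_{k,\infty})$ (in particular has positive Jacobian on every triangle), and (iii) adds energy at most $\eta$. The natural construction is to let $\D\tv$ pass through rotations $R(t)\in SO(2)$ that depend only on $x_1$, spreading the total rotation over a long interval so that each cell is only infinitesimally distorted from a rigid motion; since the energy density $\E(F)$ from \eqref{cell-energy} vanishes on $SO(2)$ and is continuous, each cell contributes $o(1)$, and by choosing the strip long enough the sum stays below $\eta$. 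Taking the infimum over $v$ and letting $\eta\to0^+$ yields $\ga^\lambda(\rho,k,R)\le\ga^\lambda(\rho,k,I)$.

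Finally, the reverse inequality $\ga^\lambda(\rho,k,I)\le\ga^\lambda(\rho,k,R)$ follows by the symmetric argument, applying the identical construction with $R^{-1}=R\transp\in SO(2)$ in place of $R$ to transport an optimal competitor for $\ga^\lambda(\rho,k,R)$ back to one for $\ga^\lambda(\rho,k,I)$. Combining the two inequalities gives the asserted equality $\ga^\lambda(\rho,k,R)=\ga^\lambda(\rho,k,I)$ for every $R\in SO(2)$.
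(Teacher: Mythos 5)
Your overall strategy coincides with the paper's: bridge between two rotations by a slowly varying, $x_1$-dependent family of rotations spread over a strip of $n$ cells, verify non-interpenetration for $n$ large, and let the strip length tend to infinity. (The paper performs the bridging to the right of the interface, connecting $\tfrac\lambda\rho R$ to $\tfrac\lambda\rho Q$ directly and proving $\ga^\lambda(\rho,k,Q)\le\ga^\lambda(\rho,k,R)$ for arbitrary pairs before symmetrising; your variant of left-composing with $R$ and bridging between $I$ and $R$ on the far left, in the phase $\LL_1^-$, is an immaterial difference.) But there is one step in your accounting that fails as written. You argue that since $\E(F)$ from \eqref{cell-energy} vanishes on $SO(2)$ and is continuous, ``each cell contributes $o(1)$, and by choosing the strip long enough the sum stays below $\eta$.'' This is circular: lengthening the strip to $n$ cells makes the per-cell distortion $O(1/n)$, but it simultaneously makes the number of cells grow like $n$ (times the fixed $k$), and $n$ terms each of size $o(1)$ need not sum to anything small --- continuity provides no rate, so the total could even diverge. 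What rescues the construction, and what the paper states explicitly at the end of its proof, is that the energy is \emph{quadratic} near $SO(2)$: a per-cell distortion of order $n^{-1}$ yields a per-cell energy of order $n^{-2}$, hence a total extra energy of order $n^{-1}\to0$.

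A second, related imprecision: you cannot literally ``let $\D\tv$ pass through rotations $R(t)\in SO(2)$ depending only on $x_1$,'' because a gradient field equal to $R(x_1)$ pointwise is curl-free only if $R$ is constant. The paper resolves this by defining $u_n$ at lattice points through the formula $u_n(x)=\int_{M+1}^{x_1}R_n(s)\,(1,0)\transp\,\d s+R_n(x_1)\,(0,x_2)\transp$ and interpolating affinely; the price is an additional term of order $|R_n'|\,k=O(1/n)$ in the gradient, visible in \eqref{130417-1}--\eqref{130417-2}. This $O(1/n)$ deviation is precisely the distortion entering the quadratic estimate above, and it is also what guarantees $\det\D u_n>0$ for $n$ large, i.e.\ admissibility in $\A_{\rho,\infty}(\Om_{k,\infty})$ --- the obstacle you correctly flagged but did not discharge. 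With these two repairs, your argument becomes the paper's proof.
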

\begin{proof}
Let $R,Q\in SO(2)$ and $M\in\N$. 
Suppose that $v\in\A_{\rho,\infty}(\Om_{k,\infty})$ is such that $\D v=I$ if $x_1\in(-\infty,-M)$
and $\D v=\frac\lambda\rho R$ if $x_1\in(M,+\infty)$.
We show that there exists a sequence $\{v_n\}\subset\A_{\rho,\infty}(\Om_{k,\infty})$ such that
$v_n=v$ for $x_1\in(-\infty,M)$, 
$\D v_n=\frac\lambda\rho Q$ for $x_1\in (M{+}n{+}1,+\infty)$, and 
\begin{equation}\label{dis-invariance}
\inf_n \E_{\infty}^\lambda(v_n,\rho,k)\leq  \E_{\infty}^\lambda(v,\rho,k) \,.
\end{equation}
This in turn yields 
$$
\ga^\lambda(\rho,k,Q) \leq \ga^\lambda(\rho,k,R) \,,
$$
which concludes the proof by exchanging the role of $R$ and $Q$. Next we prove the existence of such a sequence $\{v_n\}$.
\par
Let $\theta_1,\theta_2 \in [0, 2\pi)$ be such that
\begin{equation*}
R=
\begin{pmatrix}
\cos\theta_1  & -\sin\theta_1 \\
\sin\theta_1 & \cos\theta_1 
\end{pmatrix}\,,
\quad
Q=
\begin{pmatrix}
\cos\theta_2  & -\sin\theta_2 \\
\sin\theta_2 & \cos\theta_2 
\end{pmatrix} \,.
\end{equation*}
For any $n>1$, consider the smooth path $R_n\colon\R\to SO(2)$ connecting $R$ and $Q$ defined for $s\in[M{+}1,M{+}n]$ by
\begin{equation*}
R_n(s) := 
\begin{pmatrix}
\cos\big(\theta_1+(\theta_2{-}\theta_1)\frac{s-M-1}{n-1}\big) & -\sin\big(\theta_1+(\theta_2{-}\theta_1)\frac{s-M-1}{n-1}\big) \\[.3em]
\sin\big(\theta_1+(\theta_2{-}\theta_1)\frac{s-M-1}{n-1}\big) & \cos\big(\theta_1+(\theta_2{-}\theta_1)\frac{s-M-1}{n-1}\big)
\end{pmatrix} \,.
\end{equation*}
The definition is completed by setting $R_n(s):=R$ for $s< M{+}1$, $R_n(s):=Q$ for $s>M{+}n$.
Let $u_n$ be a piecewise affine function in $\Om_{k,\infty}$ (with respect to the triangulation $\TT_\rho$)
such that
\begin{equation*}
u_n (x) = \integlin{M{+}1}{x_1}{R_n(s) \begin{pmatrix}1 \cr 0 \end{pmatrix} }{s} + R_n(x_1)\begin{pmatrix}0 \cr x_2 \end{pmatrix} 
\quad \text{for}\ x\in \L_{\rho,\infty}(k) \,.
\end{equation*}
This in particular implies, because of the piecewise affine structure of $u_n$,
\beas
\D u_n = R && \text{in}\ (-\infty,M{+}\tfrac12)\times(0,\tfrac{\sqrt3}{2}k) \,, \\
\D u_n = Q && \text{in}\ (M{+}n{+}\tfrac12,+\infty)\times(0,\tfrac{\sqrt3}{2}k) \,.
\eeas
Notice that 
\be\label{1655}
\tfrac\lambda\rho\D u_n=\D v=\tfrac\lambda\rho R
\ee
in a zigzag-shaped set containing $(M,M{+}\tfrac12)\times(0,\tfrac{\sqrt3}{2}k)$.
Now let $v_n$ be defined by
\begin{equation*}
v_n(x ):=
\begin{cases}
v(x) & \text{in}\ (-\infty,M]\times(0,\tfrac{\sqrt3}{2}k) \,, \\
\frac\lambda\rho u_n(x) + c_n & \text{in}\ (M,+\infty)\times(0,\frac{\sqrt3}{2}k)
\,,
\end{cases}
\end{equation*}
where $c_n$ is chosen in such a way that $v_n$ is continuous. Next remark that for
$x=(x_1,x_2)\in\L_{\rho,\infty}(k)\cap\Big((M{+}\frac12,M{+}n{+}\tfrac12)\times(0,\tfrac{\sqrt3}{2}k)\Big)$ we have
\bea
\label{130417-1} u_n(x_1{+}1,x_2)-u_n(x_1,x_2)&=& \begin{pmatrix}
\cos\big(\theta_1+(\theta_2{-}\theta_1)\frac{x_1-M-1}{n-1}\big) \\[.3em]
\sin\big(\theta_1+(\theta_2{-}\theta_1)\frac{x_1-M-1}{n-1}\big)
\end{pmatrix}+\Phi_n(x) \,,\\
\label{130417-2}\quad u_n(x_1{+}\tfrac12,x_2{+}\tfrac{\sqrt3}2)-u_n(x_1,x_2) &=& \begin{pmatrix}
\cos\big(\theta_1+(\theta_2{-}\theta_1)\frac{x_1-M-1}{n-1}+\frac\pi3\big) \\[.3em]
\sin\big(\theta_1+(\theta_2{-}\theta_1)\frac{x_1-M-1}{n-1}+\frac\pi3\big)
\end{pmatrix}+\Psi_n(x) \,,
\eea
where $|\Phi_n(x)|\le C/n$ and $|\Psi_n(x)|\le C/n$ uniformly in $x$.
This shows that $\det\D u_n>0$ a.e.\ in $\Om_{k,\infty}$ for $n$ large, hence $v_n\in\A_{\rho,\infty}(\Om_{k,\infty})$.
Taking into account \eqref{1655}, it follows that $v_n\in\A_{\rho,\infty}(\Om_{k,\infty})$.
Moreover
$$
\E_{\infty}^\lambda(v_n,\rho,k) \leq \E_{\infty}^\lambda(v,\rho,k) + \E_{\infty}^\lambda(\tfrac\lambda\rho u_n(x) {+} c_n,\rho,k) \leq \E_{\infty}^\lambda(v,\rho,k) + O(n^{-1}) \,,
$$
which yields \eqref{dis-invariance} on letting $n\to \infty$.
In the last inequality we used the fact that the number of triangles contained in the set $(M{+}\frac12,M{+}n{+}\tfrac12)\times(0,\tfrac{\sqrt3}{2}k)$
is of order $n$ and in each of them the contribution to the total energy is of order $n^{-2}$,
by \eqref{130417-1}--\eqref{130417-2} and because the energy is quadratic. 
\end{proof}
We now prove estimates on the asymptotic behavior of $\ga^\lambda(\rho,k)$ as $k\to +\infty$.
We consider two main cases: $\rho=1$ (defect-free) and $\rho=\lambda$ (which gives a possible configuration with dislocations).
It turns out that the growth of $\ga^\lambda(\rho,k)$ is quadratic in $k$ if $\rho=1$
and linear in $k$ if $\rho=\lambda$.
This shows that the dislocations are favoured when the number $k$ of lines in the substrate is sufficiently large.
\begin{proposition}[Estimate in the defect-free case, $\rho=1$]\label{lowerbound}
There exist $C_1,C_2>0$ such that for every $k\in\N$
\begin{equation*}
C_1 k^2 \leq \ga^\lambda(1,k) \leq C_2 k^2 \,.
\end{equation*}
\end{proposition}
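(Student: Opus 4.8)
The plan is to convert the bond energy into the continuum language of distance from the two wells, to read off the lower bound from a \emph{compatibility obstruction} at the interface via the rigidity estimate, and to obtain the matching upper bound from an explicit competitor that spreads the transition over a region of width $\sim k$. Throughout write $H:=\tfrac{\sqrt3}{2}k$ for the height of the strip $\Om_{k,\infty}$. Since $\rho=1$, the two wells are $SO(2)$ (on $\{x_1<0\}$) and $\lambda\,SO(2)$ (on $\{x_1>0\}$). Summing the cell energy \eqref{cell-energy} over the triangles of $\TT_1$ and using Lemma~\ref{lemma-equiv} together with Remark~\ref{rmk-equiv}, I would first reduce both estimates to the continuum functional
\[
\I(v):=\int_{\{x_1<0\}}\dist^2(\D v,SO(2))\,\d x+\int_{\{x_1>0\}}\dist^2(\D v,\lambda\,SO(2))\,\d x \,,
\]
which is bounded by $C\,\E_\infty^\lambda(v,1,k)$ from below (Lemma~\ref{lemma-equiv}) and, via the elementary reverse bound $\big||F\w_i|-1\big|\le|(F-R)\w_i|$ (and its $\lambda$-analogue), from above up to a constant. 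It then suffices to show $c\,k^2\le\inf\I(v)\le C\,k^2$.

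For the lower bound I would localise near the interface. Fix the two congruent Lipschitz squares $B^-:=(-H,0)\times(0,H)$ and $B^+:=(0,H)\times(0,H)$, of side $H$. Applying Theorem~\ref{thm-rigidity} with $p=2$ on $B^-$, and to $\lambda^{-1}v$ on $B^+$, produces rotations $R^-,S^+\in SO(2)$ with
\[
\|\D v-R^-\|_{L^2(B^-)}^2\le C\,\I(v) \,,\qquad \|\D v-\lambda S^+\|_{L^2(B^+)}^2\le C\,\I(v) \,,
\]
the constant $C$ being \emph{independent of $k$} because the squares have fixed aspect ratio and the estimate is dilation invariant. Let $\ell^-(x)=R^-x+c^-$ and $\ell^+(x)=\lambda S^+x+c^+$, with the constants fixed by the means of $v$ on $B^\pm$. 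The scaled Poincaré and trace inequalities on a square of side $H$ then give $\|v-\ell^\pm\|_{L^2(I_0)}^2\le C\,H\,\I(v)$ on the common interface $I_0=\{0\}\times(0,H)$, so by continuity of $v$ one gets $\|\ell^--\ell^+\|_{L^2(I_0)}^2\le C\,H\,\I(v)$. On $I_0$ the map $\ell^--\ell^+$ is the affine function $x_2\mapsto(c^--c^+)+x_2\,(R^--\lambda S^+)e_2$, with $e_2=(0,1)$ directing $I_0$; optimising the constant term, its $L^2(I_0)$-norm is bounded below by $\tfrac{H^3}{12}\,|(R^--\lambda S^+)e_2|^2$. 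The crucial point is that $R^-e_2$ is a \emph{unit} vector while $\lambda S^+e_2$ has length $\lambda$, so the reverse triangle inequality forces $|(R^--\lambda S^+)e_2|\ge 1-\lambda>0$ regardless of the rotations. Combining, $\tfrac{H^3}{12}(1-\lambda)^2\le C\,H\,\I(v)$, i.e.\ $\I(v)\ge c\,(1-\lambda)^2H^2\ge C_1 k^2$ uniformly in the competitor.

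For the upper bound I would exhibit a competitor spreading the transition over width $M\sim k$. Choosing a profile $b\colon\R\to[\lambda,1]$ equal to $1$ for $x_1\le-M$ and to $\lambda$ for $x_1\ge M$, I set the lattice values $v(x_1,x_2):=\big(\int_0^{x_1}b,\ b(x_1)x_2\big)$ and interpolate piecewise affinely on $\TT_1$; since the Jacobian is $\approx b(x_1)^2\ge\lambda^2>0$, the non-interpenetration condition holds for $k$ large (the finitely many small $k$ only affect constants), and the far-field conditions $\D v=I$, $\D v=\lambda I$ are met. Estimating the bond energy by $C\dist^2(\D v,\cdot)$, the shear $b'(x_1)x_2$ contributes $\sim\big(\int_0^H x_2^2\,\d x_2\big)\int(b')^2\sim H^3/M$ and the diagonal mismatch $(b-1)$ resp.\ $(b-\lambda)$ contributes $\sim H\,M\,(1-\lambda)^2$; the choice $M\sim H\sim k$ balances the two and yields $\E_\infty^\lambda(v,1,k)\le C_2 k^2$.

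The reduction to $\I$ and the upper-bound computation are routine. The delicate step is the lower bound: geometric rigidity is needed separately on each side to replace the pointwise (and possibly oscillating) nearness to the wells by nearness to two \emph{fixed} affine maps, and one must then track the exact power of $H$ through the Poincaré and trace inequalities. It is precisely the gain of $H^3$ from the variance of the linear trace against only $H$ from the trace estimate that upgrades the naive bound $\I\gtrsim H$ to the sharp $\I\gtrsim H^2\sim k^2$; keeping all constants independent of $k$ through the fixed aspect ratio of the boxes and dilation invariance is what makes the bound uniform.
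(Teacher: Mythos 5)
Your proof is correct, but both halves take a genuinely different route from the paper. For the lower bound the paper argues by contradiction and compactness: assuming $\E_{\infty}^\lambda(u_j,1,k_j)/k_j^2\to0$, it rescales by $k_j$, applies Theorem~\ref{thm-rigidity} on unit-size subdomains on either side of the interface, passes to weak limits with $\D v=R^-$ on one side and $\D v=\lambda R^+$ on the other, and concludes from the Hadamard jump condition $\rank(R^-{-}\lambda R^+)\le1$, which is impossible for $\lambda\in(0,1)$. Your argument is the quantitative counterpart of exactly this obstruction: the scaled Poincar\'e and trace inequalities on squares of side $H\sim k$, combined with the reverse triangle inequality $|(R^--\lambda S^+)e_2|\ge 1-\lambda$ (which is precisely the quantified failure of tangential continuity, i.e.\ of the rank-one condition), convert the incompatibility into the explicit estimate $\I\ge c(1-\lambda)^2H^2$ with a trackable constant --- something the compactness proof does not provide. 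For the upper bound the paper avoids any explicit competitor: it takes a test function $v$ for $\ga^\lambda(1,1)$ and rescales, $u(x):=kv(x/k)$, so admissibility (including non-interpenetration) is inherited and the energy scales by $O(k^2)$ via bond counting; your shear--stretch profile with transition width $M\sim k$ achieves the same scaling more explicitly, at the cost of checking $\det\D v>0$ for the piecewise affine interpolant of the lattice values (fine, since $b'=O(1/k)$) and of balancing the shear term $H^3/M$ against the mismatch term $HM$. Two points you gloss over, both benign: (i) the two-sided comparison of $\E_\infty^\lambda$ with your continuum functional $\I$ fails in the lattice layer adjacent to the interface, where bonds carry the mixed energy \eqref{eng-int}; there the reverse bound $\E(F)\le C\dist^2(F,\cdot)$ cannot hold, since each interfacial bond costs at least $\tfrac14(1-\lambda)^2$ no matter what $\D v$ is. This is harmless for the upper bound because there are only $O(k)$ such bonds, and harmless for the lower bound because each mixed bond dominates both $\tfrac12(|\cdot|-1)^2$ and $\tfrac12(|\cdot|-\lambda)^2$, so Lemma~\ref{lemma-equiv} and Remark~\ref{rmk-equiv} still control the interfacial triangles (assigning each triangle to the well determined by the majority of its vertices). (ii) To absorb the finitely many small $k$ into the constants you should note that $\ga^\lambda(1,k)<\infty$ for each fixed $k$, e.g.\ by taking $M$ large in your construction for fixed $k$ so that the interpolant's determinant stays positive.
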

\begin{proof}
The upper bound is proven by comparing test functions for $\ga^\lambda(1,k)$ with those for 
$\ga^\lambda(1,1)$.
Let $v\in\A_{1,\infty}(\Om_{1,\infty})$ and define $u\in \A_{1,\infty}(\Om_{k,\infty})$ by $u(x) := k v(x/k)$.
Assume that $x=\xi_1\w_1+\xi_2\w_2$ and $y=(\xi_1+1)\w_1+\xi_2\w_2$, for some $\xi_1\in\Z$ and $\xi_2\in\Z\cap[0,\frac{\sqrt3}{2}k]$;
notice that $x,y\in\L_{1,\infty}(k)$ and $y-x=\w_1$.
Let $m=\lfloor\xi_1/k\rfloor$; then it is readily seen that 
$u(x)-u(y)=\frac1k(u(mk,0)-u((m+1)k,0)=v(m,0)-v(m+1,0)$ if $\xi_1+\xi_2<(m+1)k$ and $u(x)-u(y)=v(m,1)-v(m+1,1)$ otherwise.
A similar relationship holds for nearest neighbours of the type $y-x=\w_2$ and $y-x=\w_3$.
This shows that $\ga^\lambda(1,k)\leq \E_{\infty}^\lambda(u,1,k)\leq C \, \E_{\infty}^\lambda(v,1,1) \,k^2$, which yields 
$\ga^\lambda(1,k) \leq C \, \ga^\lambda(1,1)\,k^2$.
\par
Next we prove the lower bound.
On the contrary, suppose that there exist a sequence $k_j\nearrow\infty$ and a sequence 
$\{u_j\}\subset \A_{1,\infty}(\Om_{k_j,\infty})$ such that 
\begin{equation}\label{infinitesimal}
\frac{1}{k_j^2}\,\E_{\infty}^\lambda(u_j,1,k_j) \to 0 \,. 
\end{equation}
Define $v_j\colon\Om_{1,\infty}\to \R^2$ as $v_j(x) := \frac{1}{k_j} u_j(k_j x)$. 
Accordingly, we consider the rescaled lattices $\L_j:=\frac1{k_j}\L_{1,\infty}(k)$
and $\L_j^\pm:=\frac1{k_j}\L_{1,\infty}^\pm(k)$;
here two points $x,y$ are said to be nearest neighbours ($\NN{x}{y}$) if $k_jx$, $k_jy$ fulfill the corresponding property in the lattice $\LL_\rho$.
Notice that
\begin{equation*}
\E_{\infty}^\lambda(u_j,1,k_j) = \tfrac{1}{2}
\!\!\!\sum_{\substack{\NN{x}{y}\\x\in\L_j^-,\ y\in\L_j}} \!\!
\left(\frac{| v_j(x)-v_j(y)|}{\frac1{k_j}}-1\right)^2 +
\tfrac{1}{2}
\!\!\!\sum_{\substack{\NN{x}{y}\\x\in\L_j^+,\ y\in\L_j}} \!\!
\left(\frac{| v_j(x)-v_j(y)|}{\frac1{k_j}}-\lambda\right)^2 \,,
\end{equation*}
so this term controls the (piecewise constant) gradient of $v_j$.
Set 
\beas
\omega_j^-&:=& \{\xi_1 \w_1 + \xi_2 \w_2 \colon \xi_1\in(-1,-\tfrac1{k_j})\,,\ \xi_2\in(0,1) \} \,, \\ 
\omega^-&:=& \{\xi_1 \w_1 + \xi_2 \w_2 \colon \xi_1\in(-1,0)\,,\ \xi_2\in(0,1) \} \,, \\ 
\omega^+&:=& \{\xi_1 \w_1 + \xi_2 \w_2 \colon \xi_1\in(0,1)\,,\ \xi_2\in(0,1) \} \,, \\
\omega&:=& \{\xi_1 \w_1 + \xi_2 \w_2 \colon \xi_1\in(-1,1)\,,\ \xi_2\in(0,1) \} \,.
\eeas
By \eqref{infinitesimal} there exists $v\in W^{1,2}(\om;\R^2)$ such that $\D v_j\wto \D v$ up to subsequences.
We now apply the estimate from Lemma \ref{lemma-equiv} to each of the triangles of $\frac{1}{k_j}\TT_1$ 
contained in $\omega_j^-$. After integration, \eqref{bound-from-below} gives 
(recall that any triangle has area $C/k_j^2$)
\bes
\begin{split}
& \integ{\omega_j^-}{\dist^2(\D v_j, SO(2))}{x} =
\frac{C}{k_j^2} \sum_{\substack{T\in\frac{1}{k_j}\TT_1\\T\subset\om_j^-}}\dist^2(\D v_j, SO(2))\\
\le\ & \frac{C}{k_j^2} \sum_{\substack{T\in\frac{1}{k_j}\TT_1\\T\subset\om_j^-}}\sum_{i=1}^3 (|\D v_j\cdot\w_i|-1)^2 
\leq \frac{C}{k_j^2} \, \E_{\infty}^\lambda(u_j,1,k_j) \to0 \,,
\end{split}
\ees
where the convergence to zero follows from \eqref{infinitesimal}.
Therefore, Theorem \ref{thm-rigidity} ensures that there exists a sequence 
$\{R_j^-\}\subset SO(2)$ such that 
$$
\integ{\omega_j^-}{|\D v_j  - R_j^-|^2 }{x} \to 0 \,.
$$
Arguing in a similar way for the set $\omega^+$ and using Remark \ref{rmk-equiv}, we deduce that there exists $\{R_j^+\}\subset SO(2)$ 
such that 
$$
\integ{\omega^+}{|\D v_j  - \lambda R_j^+|^2}{x} \to 0 \,.
$$
Up to extracting a subsequence such that $R_j^-\to R^-$ and $R_j^+\to R^+$ for some $R^+,R^-\in SO(2)$, we obtain
$\D v=R^-$ in $\om^-$ and $\D v=\lambda R^+$ in $\om^+$.
Since $v\in W^{1,2}(\om;\R^2)$, we conclude that
$\rank(R^-{-}\lambda R^+)\le1$.
This implies in particular that $\lambda=1$,
which is a contradiction to $\lambda\in(0,1)$.
Hence the lower bound follows.
\end{proof}
\begin{remark} \label{rmk:lowerbound}
Arguing as before, one can show that $C_1 k^2 \leq \ga^\lambda(\rho,k) \leq C_2 k^2$ for every $\rho\neq\lambda$,
where the constants $C_1,C_2$ are uniform in $k$ but may depend on $\rho$.
In the remaining case $\rho=\lambda$ the growth of the minimal energy is linear in $k$, as shown in the next proposition.
\end{remark}
\begin{proposition}[Estimate for $\rho=\lambda$] \label{prop:lambda}
There exist positive constants $C_1',C_2'$ such that for every $k$
\bes
C_1' k\le \gamma^\lambda(\lambda,k)\le C_2' k \,.
\ees
\end{proposition}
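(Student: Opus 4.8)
The plan is to exploit the special structure of the case $\rho=\lambda$: here the two wells coincide, since $\tfrac{\lambda}{\rho}SO(2)=SO(2)$, so by definition $\ga^\lambda(\lambda,k)=\ga^\lambda(\lambda,k,I)$ and the far-field conditions in \eqref{gammah} read $\D v=I$ on both sides. Consequently the bulk of the wire can be left completely unstretched and the whole energy concentrates on the \emph{interfacial} bonds, i.e.\ those joining a point $P_i\in\L_{1,\infty}^-(k)$ to a point $Q_j\in\L_{\lambda,\infty}^+(k)$. Both the upper and the lower estimate then reduce to the fact that the number of such bonds grows linearly in $k$, so, in contrast to Proposition \ref{lowerbound}, neither direction needs a rigidity argument.

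For the upper bound I would simply take $v=\mathrm{id}$, which is admissible (globally affine with $\det\D v=1>0$) and satisfies $\D v=I$ everywhere. Under $v=\mathrm{id}$ every bond within $\L_{1,\infty}^-(k)$ has length $1$ and every bond within $\L_{\lambda,\infty}^+(k)$ has length $\lambda$, since its reference spacing equals $\rho=\lambda$; hence all bulk terms in \eqref{eng-infty} vanish and only the interfacial bonds contribute. A reference bond is never longer than $\tfrac{\sqrt{7}}{2}$ (Remark \ref{rmk:Del1}), so each interfacial bond contributes at most a constant, and by the same remark each of the order-$k$ interface atoms has a bounded number of neighbours. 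Therefore $\E_{\infty}^\lambda(\mathrm{id},\lambda,k)\le C_2' k$, which gives $\ga^\lambda(\lambda,k)\le C_2' k$.

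For the lower bound I would argue pointwise, with no reference to the particular deformation. In \eqref{eng-infty} all bulk terms are nonnegative, while each interfacial bond between $P_i$ and $Q_j$ contributes the two-well interaction $\tfrac12(\ell-1)^2+\tfrac12(\ell-\lambda)^2$ with $\ell=|v(P_i)-v(Q_j)|$, exactly as in \eqref{eng-int}. Minimising over $\ell\ge0$ yields the uniform lower bound $\tfrac{(1-\lambda)^2}{4}=:c_0>0$, valid for every admissible $v$. By Remark \ref{rmk:Del1} each interface atom $P_i$ has at least two interfacial neighbours, and there are of order $k$ such atoms in the strip; hence there are at least $c\,k$ interfacial bonds for some $c>0$. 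Summing their contributions gives $\E_{\infty}^\lambda(v,\lambda,k)\ge c\,c_0\,k=:C_1' k$ for every admissible $v$, and passing to the infimum yields $\ga^\lambda(\lambda,k)\ge C_1' k$.

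The only delicate point is the bookkeeping of interfacial bonds near the top and bottom of the strip, where boundary atoms may have fewer neighbours than in the unbounded lattice $\LL_\rho$; this affects only $O(1)$ atoms and thus alters the constants but not the linear growth. The conceptual core, by contrast, is the coincidence of the two wells when $\rho=\lambda$: it removes the elastic transition cost responsible for the quadratic growth in the defect-free case and leaves only the linear interfacial contribution.
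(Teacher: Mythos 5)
Your proof is correct and takes essentially the same route as the paper: there the upper bound is likewise obtained from the identity deformation, which is in equilibrium except at the interface (with the interfacial contribution controlled via the bond length and neighbour counts of Remarks \ref{rmk:Del1} and \ref{rmk:Del2}), and the lower bound from the strictly positive minimal cost of each of the at least $2k+1$ interfacial bonds. Your explicit minimisation giving the constant $\tfrac{(1-\lambda)^2}{4}$ merely makes the paper's ``strictly positive'' quantitative.
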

\begin{proof}
The lower bound follows from the fact that the minimum cost of the interfacial bonds is strictly positive and such bonds are at least $2k+1$. For the upper bound, we consider
the identical deformation, which is in equilibrium except for the interfacial bonds.
Their cost is bounded by the total number of lines, the maximal number of neighbours,
and the maximal length of a bond, which can be estimated as in Remarks~\ref{rmk:Del1} and~\ref{rmk:Del2}. 
\end{proof}
From Proposition \ref{lowerbound} and Proposition \ref{prop:lambda} we obtain the following consequence.
\begin{corollary}\label{coro}
The following inequality holds:
\begin{equation}\label{energeticineq} 
\gamma^\lambda(1,k)
>
\inf_{\rho}\gamma^\lambda(\rho,k) 
\end{equation}
for $k$ sufficiently large.
\end{corollary}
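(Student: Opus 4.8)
The plan is to derive \eqref{energeticineq} directly from the two preceding propositions by comparing the quadratic growth of the defect-free cost with the linear growth available in the dislocated case. The essential point is that $\inf_\rho\gamma^\lambda(\rho,k)$ is bounded above by the cost at the single value $\rho=\lambda$, for which Proposition \ref{prop:lambda} guarantees at most linear growth, whereas Proposition \ref{lowerbound} forces $\gamma^\lambda(1,k)$ to grow at least quadratically.

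First I would invoke the lower bound of Proposition \ref{lowerbound}, giving a constant $C_1>0$ with $\gamma^\lambda(1,k)\ge C_1 k^2$ for all $k$. Next, since the infimum over $\rho$ in \eqref{energeticineq} is taken over all admissible values, it is in particular no larger than the cost at $\rho=\lambda$; that is, $\inf_\rho\gamma^\lambda(\rho,k)\le\gamma^\lambda(\lambda,k)$. Applying the upper bound of Proposition \ref{prop:lambda} then yields a constant $C_2'>0$ with $\inf_\rho\gamma^\lambda(\rho,k)\le\gamma^\lambda(\lambda,k)\le C_2' k$. Combining these, the desired strict inequality $\gamma^\lambda(1,k)>\inf_\rho\gamma^\lambda(\rho,k)$ reduces to $C_1 k^2>C_2' k$, which holds as soon as $k>C_2'/C_1$, i.e.\ for $k$ sufficiently large.

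There is no genuine obstacle in this final step: the statement is an immediate consequence of the mismatch in growth rates already established, and the only quantitative input is the harmless comparison of $C_1 k^2$ with $C_2' k$. The real content of the result lies entirely in the two propositions it cites --- in particular in the lower bound of Proposition \ref{lowerbound}, whose proof uses the rigidity estimate of Theorem \ref{thm-rigidity} together with Lemma \ref{lemma-equiv} to exclude transitions of energy $o(k^2)$ in the defect-free lattice, and in the explicit linear-in-$k$ upper bound of Proposition \ref{prop:lambda} obtained by testing with the identical deformation. The one point deserving attention is to confirm that $C_1$ and $C_2'$ are genuinely uniform in $k$, as asserted in those statements, so that the threshold $C_2'/C_1$ is a finite number independent of $k$.
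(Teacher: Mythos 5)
Your proposal is correct and is essentially the paper's own argument: the corollary is stated there as an immediate consequence of Propositions \ref{lowerbound} and \ref{prop:lambda}, obtained exactly as you do by bounding $\inf_\rho\gamma^\lambda(\rho,k)\le\gamma^\lambda(\lambda,k)\le C_2'k$ and comparing with $\gamma^\lambda(1,k)\ge C_1k^2$ for $k>C_2'/C_1$. Your closing check that $C_1$ and $C_2'$ are uniform in $k$ is indeed the only point of substance, and both propositions assert this uniformity explicitly.
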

\begin{remark}
One can exhibit different configurations (corresponding to different values of $\rho$), for which the energy grows slower than quadratic.
For example, for $\rho_k(\lambda):=\frac{\lambda}{1-k^{\alpha-1}}$ with $0\le\alpha<\frac12$
(notice that $\rho_k(\lambda)\to\lambda$ as $k\to\infty$),
one can define the following deformation of $\L_{\rho_k(\lambda),\infty}(k)$:
$$
u(x):=\begin{cases}x &\text{if}\ x\in\L_{1,\infty}^-(k)\,,\\\frac\lambda{\rho_k(\lambda)}x &\text{if}\ x\in\L_{\rho_k(\lambda),\infty}^+(k)\,,\end{cases}
$$
extended to $\Om_{k,\infty}$ by piecewise affine interpolation and still denoted by $u\in\A_{\rho_k(\lambda),\infty}(\Om_{k,\infty})$.
A direct computation shows that for $k>1$
$$
\E^\lambda_\infty(u,\rho_k(\lambda),k)\le C k^{1+2\alpha} < C k^2 \,,
$$
because of the choice of $\alpha$.
\end{remark}
An interesting issue for the applications is to find the largest value $k_c$ such that the defect-free model is energetically convenient for $k\le k_c$;
we leave this for future research. Here we just observe  that \eqref{energeticineq} does not hold for $k=1$ as stated in the following remark.
\begin{remark}
Fix $\lambda\in(0,1)$. A straightforward computation shows that $\ga^\lambda(1,1)=\frac{3}{4}(1-\lambda)^2=\gamma^\lambda(\rho,1)$ for every $\rho>\frac12$.
On the other hand, for $\rho\le\frac12$, which corresponds to introducing dislocations, $\gamma^\lambda(\rho,1)\ge(1-\lambda)^2$.
In this case, \eqref{energeticineq} does not hold.
\end{remark}
For further comments about the estimates on the cost of the deformations,
see Section~\ref{rmk:non-int}.
\section{The limiting variational problem}\label{sec:lim}
For fixed $\rho\in(0,1]$ and $k\in\N$ we address the question of the $\Gamma$-convergence of the sequence of functionals 
$\{\I_\e\}$ defined by
$$
\I_{\e}(\tilde u_\e) := \E^\lambda_\e(u_\e,\rho,k) \quad \text{for}\ \ut_\e \in \widetilde\A_{\rho,\e}(\Om_{k}) \,,
$$
where  $u_\e(x):=\ut_\e (A_\e^{-1}x)\in\A_{\rho,\e}(\Om_{k\eps})$, see \eqref{tildeA}
and recall that
$$
\dsp A_\e:= 
\begin{pmatrix}
1 & \frac{\e-1}{\sqrt{3}}
\\
0 & \e
\end{pmatrix}
\quad \text{and thus} \quad
\dsp A_\e^{-1}:= 
\begin{pmatrix}
1 & \frac{1-\e}{\eps\sqrt{3}}
\\[.5em]
0 & \frac1\e
\end{pmatrix} \,.
$$
From now on we will drop the dependence of $\E^\lambda_\e(\cdot,\rho,k)$ and $\E^\lambda_\infty(\cdot,\rho,k)$
on $\lambda$, $\rho$, and $k$, 
since these quantities will be fixed during the whole proof of the $\Ga$-convergence.
\par
In the proof we use methods that are  common in dimension reduction problems,
see e.g.\ \cite{fjm,mm,mp}, and
we adapt them to the discrete setting.
The symbol $\co(SO(2))$ stands for the convex hull of $SO(2)$ in $\Mdd$.
\subsection{Compactness and lower bound}\label{section:four}
\begin{theorem}\label{thm1}
Let $\{\ut_\e\}\subset \widetilde\A_{\rho,\eps}(\Om_{k})$ be a 
sequence such that 
\begin{equation} \label{equibounded}
\limsup_{\e\to 0^{+}} \I_{\e}(\ut_\e) \leq C \,.
\end{equation}
Then there exists a subsequence (not relabeled) such that 
\begin{equation*}
\D \ut_\e A_\e^{-1}\weakst (\partial_1 \ut \, | \, d_2) \quad \text{weakly* in}\ L^{\infty}(\Om_k;\Mdd)\,,
\end{equation*}
where the functions $\ut \in W^{1,\infty}(\Om_k;\R^2)$, $d_2\in L^{\infty}(\Om_k;\R^2)$
are independent of $\w_2$, i.e., 
$\partial_{\w_2}\ut = \partial_{\w_2} d_2 = 0$.
Moreover,
\begin{equation} \label{charact}
(\partial_1 \ut \, | \, d_2)\in 
\begin{cases}
\co(SO(2))     &  \text{a.e.\ in}\ \Om_k^-\,,\\
\co(\frac\lambda\rho SO(2))   &  \text{a.e.\ in}\ \Om_k^+\,. 
\end{cases}
\end{equation}
Finally, for each such subsequence we have
\begin{equation} \label{gammaliminf}
\liminf_{\e\to 0^{+}} \I_{\e}(\tilde u_\e)\geq\ga^\lambda(\rho,k) \,.
\end{equation}
\end{theorem}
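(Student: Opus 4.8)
The plan is to read everything through the physical gradient $G_\eps:=\D\tilde u_\eps\,A_\eps^{-1}$, which by the chain rule is the pullback of $\D u_\eps$ under $x=A_\eps y$. On each triangle $T\in\TT_{\rho,\eps}$ the gradient $F:=\D u_\eps|_T$ is constant and its three nearest-neighbour edges contribute exactly $\E(F)=\sum_i(|F\w_i|-1)^2$ in $\Om^-$ (and the $\tfrac\lambda\rho$-analogue in $\Om^+$) to \eqref{eng-eps}. Thus Lemma \ref{lemma-equiv} and Remark \ref{rmk-equiv}, multiplied by the cell area $\sim\eps^2$ and transported to $\Om_k$ via $\d x=\eps\,\d y$, give
\[
\int_{\Om_k^-}\dist^2(G_\eps,SO(2))\,\d y+\int_{\Om_k^+}\dist^2\big(G_\eps,\tfrac\lambda\rho SO(2)\big)\,\d y\;\le\;C\eps\,\I_\eps(\tilde u_\eps)\;\longrightarrow\;0,
\]
using the bound \eqref{equibounded}.

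For compactness I would partition $\Om_{k\eps}^-$ into cells that span the full thickness $k\eps$ and have length $\sim k\eps$ in the $\w_1$-direction: these have bounded aspect ratio, so the rigidity constant in Theorem \ref{thm-rigidity} is uniform, and applying it cell-by-cell produces a field of rotations $R_\eps$ that is independent of the transverse variable $\w_2$ and satisfies $\|\D u_\eps-R_\eps\|_{L^2(\Om_{k\eps}^-)}\le C\|\dist(\D u_\eps,SO(2))\|_{L^2(\Om_{k\eps}^-)}$. After rescaling this reads $G_\eps-\tilde R_\eps\to0$ in $L^2(\Om_k^-)$ with $\tilde R_\eps\in SO(2)$ still $\w_2$-independent; likewise on $\Om_k^+$ with $\tfrac\lambda\rho SO(2)$ via Remark \ref{rmk-equiv}. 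A standard $L^\infty$-truncation on the vanishing set where $G_\eps$ is far from the wells then yields a subsequence with $G_\eps\weakst G$ in $L^\infty$, and the weak$^*$ limit inherits $\w_2$-independence from $\tilde R_\eps$, which also forces \eqref{charact} since limits of $SO(2)$-valued maps lie in $\co(SO(2))$. That the first column is $\partial_1\tilde u$ and the second $d_2$, both $\w_2$-independent, follows because $A_\eps\w_1=\w_1$ and $A_\eps\w_2=\eps\w_2$, whence $\partial_{\w_2}\tilde u_\eps=\eps\,G_\eps\w_2\to0$.

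For the lower bound \eqref{gammaliminf} I would move to the infinite lattice by setting $w_\eps(z):=\tfrac1\eps u_\eps(\eps z)$ on the growing window $\LL_\rho\cap\{|z_1|<L/\eps\}\cap\overline\Om_{k,\infty}$; the factor $\tfrac1\eps$ inside \eqref{eng-eps} gives the exact identity $\I_\eps(\tilde u_\eps)=\E_\infty^\lambda(w_\eps,\rho,k)$. It then suffices to build from $w_\eps$ an admissible competitor for \eqref{gammah} costing at most $o(1)$ more. Theorem \ref{thm-rigidity} on a unit slice $\{z_1=\pm s_\eps\}$ away from the interface (a good slice exists by averaging, since the total energy is bounded) gives rotations $R_\eps^\pm$ with $w_\eps$ close in $W^{1,2}$ to the affine maps of gradient $R_\eps^-$, resp. $\tfrac\lambda\rho R_\eps^+$. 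I would rotate globally by $(R_\eps^-)^{-1}$ (the energy is frame-indifferent), keep $w_\eps$ on $\{|z_1|<s_\eps\}$, which contains all interfacial bonds, replace it for $z_1<-(s_\eps{+}1)$ by an affine map of gradient $I$ and for $z_1>s_\eps{+}1$ by one of gradient $\tfrac\lambda\rho R_\eps$ (with $R_\eps:=(R_\eps^-)^{-1}R_\eps^+$), matched continuously, and interpolate across the two transition slices. The resulting $\hat w_\eps\in\A_{\rho,\infty}(\Om_{k,\infty})$ has $\D\hat w_\eps=I$ far left and $\D\hat w_\eps=\tfrac\lambda\rho R_\eps$ far right, so it is admissible for $\ga^\lambda(\rho,k,R_\eps)=\ga^\lambda(\rho,k)$ by Proposition \ref{invariance}; since the affine bulk costs nothing and the two transition slices cost $o(1)$, we get $\ga^\lambda(\rho,k)\le\E_\infty^\lambda(\hat w_\eps)\le\E_\infty^\lambda(w_\eps)+o(1)=\I_\eps(\tilde u_\eps)+o(1)$, and the liminf yields \eqref{gammaliminf}.

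The main obstacle is this gluing step. The interpolation across each transition slice must simultaneously (i) respect $\TT_\rho$, so that $\hat w_\eps$ is genuinely piecewise affine; (ii) keep $\det\D\hat w_\eps>0$, so that $\hat w_\eps$ is admissible; and (iii) have vanishing energy. Points (ii)--(iii) require upgrading the $L^2$-rigidity on the good slice to control of the individual cell gradients, so that on every triangle of the transition layer $\D\hat w_\eps$ is within $o(1)$ of $R_\eps^-$ (determinant $1$) or of $\tfrac\lambda\rho R_\eps$ (determinant $(\lambda/\rho)^2>0$), keeping the Jacobian bounded away from zero; because $k$ is fixed the layer contains only $O(k)$ triangles, so the total transition energy is indeed $o(1)$. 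Secondary care is needed to justify the $L^\infty$-truncation in the compactness step, where the degeneracy $A_\eps\w_2=\eps\w_2$ must be kept in view.
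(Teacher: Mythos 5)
Your proposal is correct and follows essentially the same route as the paper's proof: the triangle-wise lower bound via Lemma \ref{lemma-equiv} and Remark \ref{rmk-equiv}, Theorem \ref{thm-rigidity} applied on congruent cells spanning the full cross-section to produce a $\w_2$-independent rotation field whose weak* limit yields \eqref{charact}, and, for \eqref{gammaliminf}, blow-up to the infinite strip, a pigeonhole choice of low-energy slices, rigidity plus Poincar\'e there, surgery replacing the deformation by exact equilibria outside, and Proposition \ref{invariance} to remove the residual rotation — and your resolution of the gluing obstacle matches the paper's, since the blown-up triangles have unit size so $L^2$-smallness on a good slice is automatically per-triangle smallness, which keeps the Jacobian positive and makes the $O(k)$ transition triangles cost $o(1)$. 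Two minor caveats: your triangle-wise energy identity fails on the $O(\eps)$-wide interfacial strip (mixed bonds, irregular triangles), which must be absorbed via the uniform $L^\infty$ bound exactly as the paper does by relegating the interfacial subdomain to the ``bad'' set; and no truncation is needed (nor would it alone give weak* convergence of the original sequence), because \eqref{equibounded} bounds every individual bond strain and hence $\|\D u_\eps\|_{L^\infty}$ directly, as the paper observes at the start of its proof.
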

\begin{proof}
The assumption \eqref{equibounded} implies that $\|\D u_\e\|_{L^{\infty}(\Om_{k\eps};\Mdd)}$ is uniformly  bounded in $\eps$
and so $\{\D \ut_\e A_\e^{-1}\}$ is a uniformly bounded sequence in $L^{\infty}(\Om_k;\Mdd)$. Moreover,
$$ 
\D \ut_\e A_\e^{-1} = \big( \partial_1\ut_\e \, \big|  \, \tfrac{1}{\e}\partial_{\w_\e}\ut_\e \big) \quad \text{where}\ 
\w_\e:= \big( \tfrac{1-\e}{\sqrt{3}}, 1 \big) \,.
$$ 
Since $\w_\e \to \frac{2}{\sqrt{3}} \w_2$, there exist a subsequence
and functions $\ut\in  W^{1,\infty}(\Om_k;\R^2)$ and $d_2 \in L^{\infty}(\Om_k;\R^2) $ such that
$\partial_1\ut_\e \weakst \partial_1 \ut$ weakly* in $L^{\infty}(\Om_k;\Mdd)$,
$\ut\in  W^{1,\infty}(\Om_k;\R^2)$ is independent of $\w_2$, 
and $ \frac{1}{\e}\partial_{\w_\e}\ut_\e \weakst d_2$.
In order to show $\partial_{\w_2}d_2=0$ and \eqref{charact}, we  apply the rigidity estimate \eqref{rigidity} to the sequence $u_\e$.
To this aim, we divide the domain $\Om_{k\eps}$ into subdomains $\subdo_\e^i$ of size $\eps$ 
and consider those sets $\subdo_\e^i$ where the energy is small.  
More precisely, for $i\in\Z$ set 
\be
\label{defin:subdo}
\subdo_\eps^i:=
\begin{cases}
\{ \xi_1\w_1+\xi_2\w_2\colon \xi_1\in(i\eps,(i{+}1)\eps)\,,\ \xi_2\in(0, k\eps) \} &\text{if}\ i\le-1 \,,\\
\{ \xi_1\w_1+\xi_2\w_2\colon \xi_1\in(i\rho\eps,(i{+}1)\rho\eps)\,,\ \xi_2\in(0, k\eps) \} & \text{if}\ i\ge0 \,.
\end{cases}
\ee
Let $I_\eps:=\{i\in\Z\colon i\neq-1,\ \subdo_\eps^i\subset\Om_{k\eps}\}$,
see Figure~\ref{parall1}.
\begin{figure}
\centering
\psfrag{4}{\dots}
\psfrag{-5}{\hspace{.3cm}\dots}
\psfrag{-4}{$\subdo_\e^{-4}$}
\psfrag{-3}{$\subdo_\e^{-3}$}
\psfrag{-2}{$\subdo_\e^{-2}$}
\psfrag{-1}{$\subdo_\e^{-1}$}
\psfrag{0}{$\subdo_\e^{0}$}
\psfrag{1}{$\subdo_\e^{1}$}
\psfrag{2}{$\subdo_\e^{2}$}
\psfrag{3}{$\subdo_\e^{3}$}
\psfrag{p-1}{\hspace{-.2cm}$-\eps$}
\psfrag{p0}{\hspace{.02cm}$0$}
\psfrag{p1}{\hspace{-.1cm}$\rho\eps$}
\includegraphics[width=.8\textwidth]{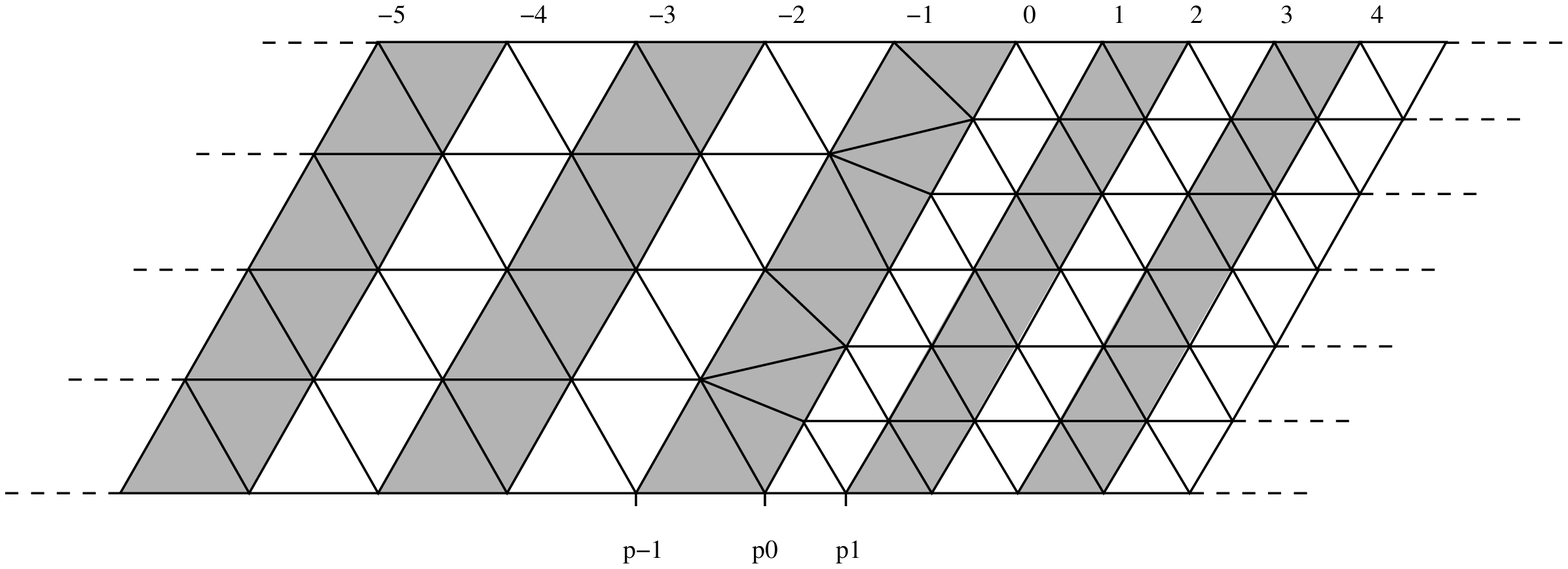}
\caption{The parallelograms $\subdo_\e^i$.}
\label{parall1}
\end{figure}
Let $\delta \in(0,1)$ and set 
$$
\subdo_\e :=\bigcup_{i\in I_\eps} \{\subdo_\e^i \colon \E^i_\e(u_\e) < \delta \} \,, 
\quad
\widetilde\subdo_\e := \Om_{k\eps} \setminus \subdo_\e \,.
$$
Here $\E^i_\e(u_\e)$ denotes the contribution of $\overline{\subdo_\e^i}$ to $\E_\e(u_\e)$,
i.e., in the sum \eqref{eng-eps} one considers only the bonds contained in $\overline{\subdo_\e^i}$. 
Notice that in the previous definition, we let the subdomain $\subdo_\e^{-1}$
(corresponding to the interface) be contained in $\widetilde\subdo_\e$ for convenience.
The assumption \eqref{equibounded} yields for $\delta$ sufficiently small
\begin{equation}\label{cardinality}
\#\{i\in \Z\colon \subdo_\e^i \cap\widetilde\subdo_\e \neq \emptyset\} \leq \frac{C}{\delta} \,.
\end{equation}
Thanks to the definition of $\subdo_\eps$, we can apply Lemma \ref{lemma-equiv} and Remark \ref{rmk-equiv} to deduce that 
\bea\label{small-dist}
\dist^2(\D u_\e(x),SO(2)) &<& C\delta \quad \text{for a.e.}\ x\in\subdo_\e \cap \Om_{k\eps}^-\,,\\
\label{small-dist2}
\dist^2(\D u_\e(x),\tfrac\lambda\rho SO(2))& <& C\delta \quad \text{for a.e.}\ x\in\subdo_\e \cap \Om_{k\eps}^+ \,. 
\eea
Next by the rigidity estimate \eqref{rigidity} there exists $R^i_\e\in SO(2)$ such that 
\bea\label{dist-rot}
\integ{\subdo_\e^i}{ |\D u_\e - R_\e^i |^2 }{x} &\leq& C \integ{\subdo_\e^i}{ \dist^2(\D u_\e, SO(2)) }{x}
      \quad\text{if}\ \subdo_\e^i \subset \Om_{k\eps}^-  \,,\\
\label{dist-rot2}
\integ{\subdo_\e^i}{ |\D u_\e - \tfrac\lambda\rho R_\e^i |^2 }{x} &\leq& C \integ{\subdo_\e^i}{ \dist^2(\D u_\e, \tfrac\lambda\rho SO(2)) }{x} 
       \quad\text{if}\ \subdo_\e^i \subset \Om_{k\eps}^+  \,,
\eea
with $C$ independent of $i$ and $\eps$ since the shape of the domains is the same.
Let $R_\e \colon \Om_{k\eps} \to SO(2)$ be such that $R_\e(x)=R_\e^i$ for $x\in \subdo_\e^i$, $i\in\Z$; notice that $R_\e$ is independent of $\w_2$.
By \eqref{small-dist}--\eqref{dist-rot2} and
the uniform bound on $\|\D u_\e\|_{L^{\infty}(\Om_{k\eps};\Mdd)}$ together with \eqref{cardinality},
we get
\bes
\begin{split}
& \integ{\Om_{k\eps}^-}{ |\D u_\e - R_\e|^2 }{x}  +  
\integ{\Om_{k\eps}^+}{ |\D u_\e - \tfrac\lambda\rho R_\e|^2 }{x}  \\
=\ & \integ{\Om_{k\eps}^-\cap\subdo_\e}{ |\D u_\e - R_\e|^2 }{x}  +  
\integ{\Om_{k\eps}^+\cap\subdo_\e}{ |\D u_\e - \tfrac\lambda\rho R_\e|^2  }{x}   \\
&+  
\integ{\Om_{k\eps}^-\setminus\subdo_\e}{ |\D u_\e - R_\e|^2 }{x}   +  
\integ{\Om_{k\eps}^+\setminus\subdo_\e}{ |\D u_\e - \tfrac\lambda\rho R_\e|^2  }{x}  \\
\leq\ &   C\e \delta  +  \frac{C}{\delta}\e^2 \,.
\end{split}
\ees
Therefore, by a change of variables,
\bes
\int_{\Om_k^-} |\D \ut_\e A_\e^{-1}- R_\e|^2  dx   +  
\int_{\Om_k^+} |\D \ut_\e A_\e^{-1} - \tfrac\lambda\rho R_\e|^2  dx
\leq C \delta  +  \frac{C}{\delta}\e \,,
\ees
where $R_\e$ has been extended to $\Om_k$ in such a way that the independence on $\w_2$ is preserved.
Since $R_\e\in SO(2)$, its weak* limit does not 
depend on $\w_2$ and takes values in $\co(SO(2))$. 
Choosing $\delta = \sqrt{\e}$ and letting $\e \to 0^+$ we obtain \eqref{charact}.
\par
In order to show \eqref{gammaliminf} we define $ v_\e(x) :=\frac 1 \e u_\e (\e x) $ and observe that 
by assumption \eqref{equibounded}
\begin{equation}\label{rescaled-bound}
\E_\e(u_\e)=\sum_{\substack{\NN{x}{y}\\x,y\in\frac{1}{\e}\Om_{k\eps}\\x\in \L_{1,\infty}^-(k) \\y\in\L_{\rho,\infty}(k)}}
\big( | v_\e(x)-v_\e(y)|-1 \big)^2 +
\sum_{\substack{\NN{x}{y}\\x,y\in\frac{1}{\e}\Om_{k\eps}\\x\in \L_{\rho,\infty}^+(k) \\y\in\L_{\rho,\infty}(k)}}
\big( | v_\e(x)-v_\e(y)|-\lambda \big)^2 
\leq C \,.
\end{equation}
Then there exist two sub-parallelograms, see \eqref{defin:subdo},
$$
\Subdo_\e^-:= \tfrac1\eps \subdo_\e^j \,, \quad \Subdo_\e^+:= \tfrac1\eps \subdo_\e^{j'} \,, 
$$
with $j,j'\in I_\eps$, $j<-1$, and $j'\ge0$,
such that the contribution to \eqref{rescaled-bound} of the bonds in  
$\overline{\Subdo_\e^-}$ and $\overline{\Subdo_\e^+}$ is less than $C\e/L$.
Hence, by Theorem~\ref{thm-rigidity}, Lemma~\ref{lemma-equiv}, and Remark~\ref{rmk-equiv},
arguing as in the proof of Proposition \ref{lowerbound} we deduce the existence of $R_\e^-,R_\e^+\in SO(2)$ with
\bes
\begin{split}
& \integ{\Subdo_\e^-}{\big| \D v_\e -R_\e^- \big|^2}{x} 
 + \integ{\Subdo_\e^+}{\big| \D v_\e -\tfrac\lambda\rho R_\e^+ \big|^2}{x} \\
\le\ & C \integ{\Subdo_\e^-}{\dist^2(\D v_\e,SO(2))}{x} 
 + C \integ{\Subdo_\e^+}{\dist^2(\D v_\e,\tfrac\lambda\rho SO(2))}{x}
\leq   C \e \,.
\end{split}
\ees
From the Poincar\'e inequality there exist $c_\e^-, c_\e^+\in \R^2$ such that
\begin{equation}\label{eq-small-energy}
\begin{split}
&\integ{\Subdo_\e^-}{\Big(\big| v_\e -(R_\e^- x + c_\e^-) \big|^2 + \big| \D v_\e -R_\e^- \big|^2\Big)}{x} \\
& + \integ{\Subdo_\e^+}{\Big(\big| v_\e -(\tfrac\lambda\rho R_\e^+ x + c_\e^+) \big|^2 + \big| \D v_\e -\tfrac\lambda\rho R_\e^+ \big|^2\Big)}{x}
\leq   C \e \,.
\end{split}
\end{equation}
Let $\bar v_\e\colon\Om_{k,\infty}\to\R^2$ be the piecewise affine  deformation (with respect to the triangulation $\TT_\rho$) such that
$$
\bar v_\e(x) :=
\begin{cases}
 R_\e^- x + c_\e^- & \text{for}\ x=\xi_1 \w_1 + \xi_2 \w_2 \colon \xi_1\le j \,,\\
 v_\e(x)        &  \text{for}\  x=\xi_1 \w_1 + \xi_2 \w_2 \colon j{+}1\le\xi_1\le j' \,,\\
\frac\lambda\rho R_\e^+ x + c_\e^+ & \text{for}\ x=\xi_1 \w_1 + \xi_2 \w_2 \colon \xi_1\ge j'{+}1 \,.
\end{cases}
$$
For $\eps$ sufficiently small, this function satisfies $\bar v_\e \in \A_{\rho,\infty}(\Om_{k,\infty})$ because $\det\D \bar v_\e>0$ a.e.\ in $\Om_{k,\infty}$.
Indeed, the images of the triangles of $\TT_\rho$ contained in $\Subdo_\e^-$ maintain the correct orientation:
if on the contrary this did not happen, one would get $v_\e(x) -(R_\e^- x + c_\e^-)\ge\frac{\sqrt3}{2}$ for some vertex $x$ of such a triangle.
The same holds for the triangles contained in $\Subdo_\e^+$.
This contradicts \eqref{eq-small-energy} and shows $\bar v_\e \in \A_{\rho,\infty}(\Om_{k,\infty})$. 
Moreover, the interaction energy of a bond lying in $\Subdo_\e^-$ between two points $x=(j{+}1)\w_1+\xi_2\w_2$ and $y=j\w_1+\xi_2\w_2$ 
is controlled by
$$
\left(\big| \bar v_\e(x)-\bar v_\e(y)\big|-1\right)^2 =
\left(\big|  v_\e(x) - (R_\e^- y + c_\e^-)\big| - \big|R_\e^- y-R_\e^- x\big|\right)^2 \le
\big|v_\e(x) - (R_\e^- x + c_\e^-)\big|^2 \,.
$$
An analogous estimate holds in $\Subdo_\e^+$.
Therefore, by \eqref{gammah}, \eqref{rescaled-bound}, and \eqref{eq-small-energy} it follows that 
\bes
\begin{split}
\ga^\lambda(\rho,k) & \leq \E_{\infty}(\bar v_\e) \leq \E_\e(u_\e)+
C\left(\integ{\Subdo_\e^-}{\big| \D v_\e - R_\e^- \big|^2}{x}
+ \integ{\Subdo_\e^+}{\big| \D v_\e -\tfrac\lambda\rho R_\e^+ \big|^2}{x}\right)\\
& \leq \E_\e(u_\e)+ C \e\,,
\end{split}
\ees
which yields \eqref{gammaliminf} on letting $\e \to 0^+$.
\end{proof}
\subsection{Upper bound}\label{section:five}
The first step for the proof of the upper bound is the following proposition.
\begin{proposition}\label{thm2}
Let $\ut \in W^{1,\infty}(\Om_k;\R^2)$,
$d_2\in L^{\infty}(\Om_k;\R^2)$ be such that 
$\partial_{\w_2} \ut = \partial_{\w_2} d_2 = 0$ and
\begin{equation} \label{cohull}
(\partial_1 \ut \, | \, d_2)\in 
\begin{cases}
\co(SO(2))     &  \text{a.e.\ in}\ \Om_k^-\,,\\
\co(\frac\lambda\rho SO(2))   &  \text{a.e.\ in}\ \Om_k^+\,. 
\end{cases}
\end{equation}
Then there exists a sequence $\{\ut_\e\}\subset \widetilde\A_{\rho,\eps}(\Om_{k})$ such that
\begin{equation} \label{1328061}
\D \ut_\e A_\e^{-1}\weakst (\partial_1 \ut \, | \, d_2) \quad \text{weakly* in}\ L^{\infty}(\Om_k;\Mdd)\,,
\end{equation}
and
\be\label{1328062}
\limsup_{\e\to 0^{+}} \I_{\e}(\tilde u_\e)\leq\ga^\lambda(\rho,k) \,.
\ee
\end{proposition}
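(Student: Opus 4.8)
The plan is to prove this $\Ga$-limsup inequality by exhibiting, for the given pair $(\ut,d_2)$, a recovery sequence $\{\ut_\e\}$ whose energy is carried almost entirely by an $O(\e)$-layer across the interface, where it reproduces the optimal transition cost, while the bulk contribution is infinitesimal. The core building block is supplied by the definition \eqref{gammah}: given $\eta>0$, I would fix $M>0$, $R\in SO(2)$ and $v\in\A_{\rho,\infty}(\Om_{k,\infty})$ admissible in \eqref{gammah}, with $\D v=I$ for $x_1<-M$, $\D v=\frac\lambda\rho R$ for $x_1>M$, and $\E_\infty(v)\le\ga^\lambda(\rho,k)+\eta$. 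Rescaling by $u_\e(x):=\e\,v(x/\e)$ places this optimal transition in an $O(\e)$-neighbourhood of $\{x_1=0\}$; since the discrete energy \eqref{eng-eps} is invariant under $u\mapsto\e\,u(\cdot/\e)$, this layer carries energy at most $\ga^\lambda(\rho,k)+\eta$. It then remains to extend $u_\e$ to the left and right so that the rescaled gradients converge weak$^*$ to $(\partial_1\ut\,|\,d_2)$ at no asymptotic cost, and to verify admissibility.

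Away from the interface the target takes values in $\co(SO(2))$ (resp.\ $\co(\frac\lambda\rho SO(2))$), so at each $x_1$ I would write $(\partial_1\ut\,|\,d_2)(x_1)$ as a convex combination of finitely many rotations (resp.\ scaled rotations), with coefficients depending measurably on $x_1$. I would then build $\ut_\e$ so that $\D\ut_\e A_\e^{-1}$ alternates, on a mesoscale $\eta_\e\to0$, between these rotations with the prescribed weights: on the plateaus the gradient equals a constant rotation, where the cell energy vanishes identically, and consecutive plateaus are joined by short transition layers. Because $SO(2)$ has no nontrivial rank-one connections, such transitions cannot be realised at zero energy by a sharp interface; instead, as in the proof of Proposition \ref{invariance}, I would interpolate the frame through a smooth path in $SO(2)$ diluted over many lattice columns. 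A transition connecting two rotations over a macroscopic length $w$ keeps each cell within $O(\e/w)$ of a rotation, so by Lemma \ref{lemma-equiv} and Remark \ref{rmk-equiv} its total cost is $O(k\e/w)$. Summing over the $O(1/\eta_\e)$ transitions and choosing the mesoscale in the range $\sqrt\e\ll\eta_\e\ll1$ makes the bulk energy $O(k\e/\eta_\e^2)\to0$, while $\eta_\e\to0$ guarantees $\D\ut_\e A_\e^{-1}\weakst(\partial_1\ut\,|\,d_2)$, which is \eqref{1328061}.

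I would glue the two bulk constructions to the rescaled profile $u_\e$ over $O(1)$ columns on either side of the interface layer, inserting in each case one further smooth frame transition connecting the interfacial values $I$ and $\frac\lambda\rho R$ (the latter normalised freely by Proposition \ref{invariance}) to the adjacent bulk rotations; these finitely many transitions again cost $O(\e)\to0$. On the plateaus $\det\D\ut_\e=1$, in the transition and gluing layers $\D\ut_\e A_\e^{-1}$ is within $o(1)$ of $SO(2)$ (resp.\ $\frac\lambda\rho SO(2)$), and on the profile $\det\D u_\e>0$ by admissibility of $v$, so $\det\D\ut_\e>0$ a.e.\ for $\e$ small and $\ut_\e\in\widetilde\A_{\rho,\e}(\Om_k)$. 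Collecting the contributions gives $\limsup_{\e\to0^+}\I_\e(\ut_\e)\le\ga^\lambda(\rho,k)+\eta$, and a diagonal argument in $\eta$ yields \eqref{1328062}. Alternatively, one may first prove the bound for targets valued in $SO(2)$ and $\frac\lambda\rho SO(2)$ and then invoke the lower semicontinuity of the $\Ga$-upper limit together with the weak$^*$ density of such targets among the admissible ones.

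I expect the delicate point to be the bulk relaxation of the second paragraph. Since no two distinct rotations are rank-one connected, an interior value of $\co(SO(2))$ cannot be produced by a naive laminate without incurring an energy of order $k$ at every interface; the construction is viable only because the incompatibilities can be diluted over many columns and absorbed, at the expense of an $O(\e)$ deviation from the wells, by the transverse freedom that the anisotropic rescaling $A_\e$ confers on the second gradient column. Carrying this out while keeping $\ut_\e$ piecewise affine on the fixed triangulation $\TT_{\rho,\e}$, matching it continuously to the interfacial profile, and preserving the orientation constraint $\det\D\ut_\e>0$ throughout, is the technical heart of the argument.
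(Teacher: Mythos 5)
Your proposal is correct and follows essentially the same route as the paper: an $O(\e)$ interfacial layer built from a near-optimal competitor in \eqref{gammah} (normalised via Proposition \ref{invariance}), bulk plateaus of rotations joined by diluted smooth $SO(2)$-transitions of cost $O(\e/\mathrm{width})$, and the convex-hull constraint \eqref{cohull} reached by fine oscillation between the wells --- your explicit mesoscale construction is the paper's two-step argument (piecewise-constant well-valued approximation, then weak* approximation/diagonalisation) carried out by hand, and your closing ``alternative'' is verbatim the paper's structure. One cosmetic slip: Lemma \ref{lemma-equiv} bounds the distance to $SO(2)$ \emph{by} the energy, which is the wrong direction for estimating the transition cost, but the converse inequality $\E(F)\le 3\,\dist^2(F,SO(2))$ is immediate from $\bigl||F\w_i|-1\bigr|\le|F-R|$ for $R\in SO(2)$, so nothing is lost.
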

\begin{proof}
We first assume that $(\partial_1 \ut \, | \, d_2)$ is piecewise constant with 
$(\partial_1 \ut \, | \, d_2) \in SO(2)$ a.e.\ in $\Om_k^-$ and 
$(\partial_1 \ut \, | \, d_2) \in \tfrac\lambda\rho SO(2)$ a.e.\ in $\Om_k^+$.
In this case there exist $m,n\in\Z$, $m<0$, $n\ge0$,
$-L=a_m<a_{m+1}<\dots<a_{-1}<a_0 =0 <a_1<\dots<a_n<a_{n+1}=L$,
and $R_{i}\in SO(2)$ for $i=m,\dots,-1,0,\dots,n$ such that
\begin{equation}
\label{piececonst}
(\partial_1 \ut \, | \, d_2) = \sum_{i=m}^{-1}\chi_{_{\Subdo_i }}R_{i}+
 \sum_{i=0}^{n}\chi_{_{\Subdo_i }}\tfrac\lambda\rho R_{i}\,,
\end{equation}
where $\chi_{_{\Subdo_i }}$ is the characteristic function of the set 
$$
\Subdo_i := \{\xi_1\w_1 + \xi_2\w_2 \colon a_i<\xi_1<a_{i+1} \,,\ 0<\xi_2<k  \}\,. 
$$
Let $\{\sigma_{\e}\}$ be a sequence of positive numbers such that 
$\e \ll\sigma_{\e}\ll 1$,  and set
$$
\widehat \Subdo_i :=  
\{\xi_1\w_1 + \xi_2\w_2 \colon a_i{-}\sigma_\e<\xi_1< a_i{+}\sigma_\e \,,\ 0<\xi_2<k  \} \,,
$$
see Figure~\ref{parall2}.
\begin{figure}
\centering
\psfrag{d}{\dots}
\psfrag{-L}{\hspace{-.9cm}$-L{=}a_m$}
\psfrag{-2}{\hspace{-.2cm}$a_{m{+}1}$}
\psfrag{-1}{\hspace{-.2cm}$a_{-1}$}
\psfrag{0}{$a_0{=}0$}
\psfrag{1}{$a_1$}
\psfrag{2}{$a_n$}
\psfrag{L}{\hspace{-.2cm}$a_{n+1}=L$}
\psfrag{p-L}{$\Subdo_m$}
\psfrag{p-1}{\hspace{.3cm}$\Subdo_{-1}$}
\psfrag{p0}{$\Subdo_0$}
\psfrag{pL}{\hspace{.3cm}$\Subdo_n$}
\psfrag{q-2}{\hspace{-.2cm}$\widehat\Subdo_{m+1}$}
\psfrag{q-1}{\hspace{-.2cm}$\widehat\Subdo_{-1}$}
\psfrag{q0}{\hspace{-.2cm}$\widehat\Subdo_{0}$}
\psfrag{q1}{\hspace{-.2cm}$\widehat\Subdo_{1}$}
\psfrag{qL}{\hspace{-.2cm}$\widehat\Subdo_{n}$}
\includegraphics[width=.8\textwidth]{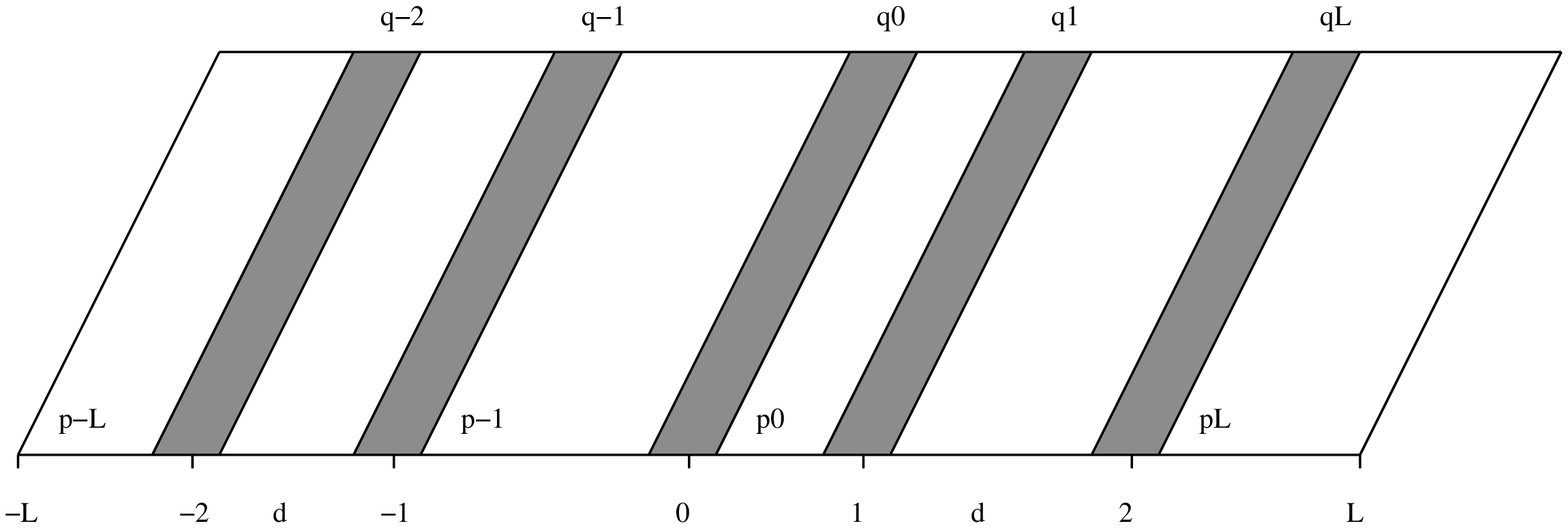}
\caption{The parallelograms $\Subdo_i$ and $\widehat\Subdo_i$.}
\label{parall2}
\end{figure}
We set
\begin{equation*}
\tilde u_{\e}(x):= 
\begin{cases}
R_{m}(A_\eps x)
&  \text{if}\ x\in \Subdo_m\setminus \widehat \Subdo_{m+1} \,,\\
R_{i}(A_\eps x) +c_\e^i
&  \text{if}\ x\in  \Subdo_i \setminus (\widehat \Subdo_i \cup \widehat \Subdo_{i+1})\,,
\quad i=m+1,\dots,-1\,,\\
\tfrac\lambda\rho R_{i}(A_\eps x)+c_\e^i
&  \text{if}\ x\in  \Subdo_i \setminus (\widehat \Subdo_i \cup \widehat \Subdo_{i+1})\,,
\quad i=0,\dots,n-1\,,\\
\tfrac\lambda\rho R_{n}(A_\eps x) +c_\e^n
& \text{if}\  x\in  \Subdo_{n}\setminus \widehat \Subdo_{n}\,,
\end{cases}
\end{equation*} 
where the constant vectors $c_\e^i$, $i=m+1,\dots,-1,0,\dots,n$ will be chosen below.
\par
In order to define $\tilde u_{\e}$ in the set 
$\widehat \Subdo_{0}$, we proceed in the 
following way. 
Let $\eta>0$. By definition of $\ga^\lambda(\rho,k)$,  see \eqref{gammah}, and by 
Proposition \ref{invariance},   
there exist $M>0$ and 
$v\in \A_{\rho,\infty}(\Om_{k,\infty})$ such that 
$$
 \D v=R_{-1}  \ \text{for} \ x_1\in(-\infty,-M)\,,\, \quad \D v=\tfrac\lambda\rho R_{0} \ \text{for} \ x_1\in(M,+\infty)
$$
and
\bes
\E_{\infty}(v)  \leq \ga^\lambda(\rho,k) + \eta \,.
\ees
Next define $\tilde u_{\e}$ in the set $\widehat \Subdo_{0}$ as 
$$
\tilde u_{\e}(x):= \eps v(\tfrac{1}{\eps}A_\eps x) + l_\e^0\,,
$$
where the constant $l_\e^0$ will be chosen below.
\par
In order to define  $\tilde u_{\e}$  in  the sets $\widehat \Subdo_i $ ($i<0$),
we proceed as in the proof of
Proposition~\ref{invariance}. 
For $\eps$ sufficiently small, we construct a smooth function 
$R_\e:\R\to SO(2)$ such that $R_\e(a_{i}{-}\sigma_{\e}{+}\frac{k\eps}{2})=R_{i-1}$ 
and 
$R_\e(a_{i}{+}\sigma_{\e})=R_{i}$
and define in the rescaled set $A_\eps\widehat \Subdo_i $
a piecewise affine function $w_\eps$ with a.e.\ positive Jacobian
such that
\begin{equation*}
w_\e (x) := 
\integlin{a_{i}-\sigma_{\e}+\frac{k\eps}{2}}{x_1}{R_\e(s) \begin{pmatrix}1 \cr 0 \end{pmatrix}}{s} + 
R_\e(x_1)\begin{pmatrix}0 \cr x_2 \end{pmatrix} 
+l_\e^i  \quad \text{for}\ x\in \L_{\rho,\eps}(k) \cap (A_\eps\widehat \Subdo_i ) \,,
\end{equation*}
where the constants $l_\e^i$ will be chosen below.
Notice that the rescaled parallelogram $A_\eps\widehat \Subdo_i$ has sides $\sigma_\e$ and $k\eps$.
We complete the definition of $\tilde u_\e$ by setting 
\begin{equation*}
\tilde u_\e (x) = w_\e (A_\e x) \quad \text{for}\ x\in \widehat \Subdo_i  \,.
\end{equation*}
The definition in  the sets $\widehat \Subdo_i $ ($i>0$) is analogous.
Next we choose the constants $c_\e^{i}$ and $l_\e^{i}$ so that the function $\tilde u_{\e}$ is continuous.
Arguing as in Proposition~\ref{invariance}, we see that $\tilde u_\eps\in \widetilde\A_{\rho,\eps}(\Om_{k})$ and that 
the cost of the transition in $\widehat\Subdo_i$ has order $O(\e/\sigma_\e)$,
so \eqref{1328061}--\eqref{1328062} hold.
\par
For the general case when \eqref{cohull} holds, we find a sequence of piecewise constant maps as
in \eqref{piececonst} which weakly* converges to $(\partial_1 \ut \, | \, d_2)$ in $L^\infty(\Om_k;\Mdd)$, and then conclude by approximation.
\end{proof}
The next theorem states that the domain of the $\Gamma$-limit 
of the sequence $\{  \I_\e \}$ is
\begin{equation}
\label{gammadomain}
\begin{split}
\A:=\big\{
u\in W^{1,\infty}(\Om_k;\R^2)\colon &
\partial_{\w_2} u  = 0 \ \text{a.e.\ in}\ \Om_k\,, \\
& |\partial_1 u |\leq 1 \ \text{a.e.\ in}\ \Om_k^- \,,
|\partial_1 u |\leq \tfrac\lambda\rho \ \text{a.e.\ in}\ \Om_k^+
\big\}
\end{split}
\end{equation}
and that the 
$\Gamma$-limit is constant in $\A$.
\begin{theorem}\label{thm3}
The sequence of functionals $\{  \I_{\e}\}$ 
$\Gamma$-converges, as $\e\to 0^{+}$, to the functional
\begin{equation}\label{eqthm3}
\I(u)=
\begin{cases}
\ga^\lambda(\rho,k)   &  \text{if}\ u\in\A\,,\\
+\infty   &  \text{otherwise,}
\end{cases}
\end{equation}
with respect to the weak* convergence in $W^{1,\infty}(\Om_k;\R^2)$.
\end{theorem}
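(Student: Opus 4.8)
The plan is to deduce Theorem \ref{thm3} by combining the compactness and lower bound of Theorem \ref{thm1} with the recovery construction of Proposition \ref{thm2}; the only new ingredients are the identification of the effective domain $\A$ of \eqref{gammadomain} with the constraint \eqref{charact}, and the passage between the sheared convergence $\D\ut_\e A_\e^{-1}\weakst(\partial_1\ut\,|\,d_2)$ used in those two results and the weak* $W^{1,\infty}$ convergence $\ut_\e\weakst u$ appearing in the statement. First I would record the elementary geometric fact that $\co(SO(2))=\{\,aI+bJ\colon a^2+b^2\le1\,\}$, where $J$ denotes the counterclockwise rotation by $\tfrac\pi2$, since $SO(2)$ lies in the plane $\mathrm{span}(I,J)$ and traces the unit circle therein. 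Consequently $(\partial_1 u\,|\,d_2)\in\co(SO(2))$ if and only if $|\partial_1 u|\le1$ and $d_2=J\partial_1 u$, and likewise $(\partial_1 u\,|\,d_2)\in\co(\tfrac\lambda\rho SO(2))=\tfrac\lambda\rho\co(SO(2))$ iff $|\partial_1 u|\le\tfrac\lambda\rho$ and $d_2=J\partial_1 u$. This shows that \eqref{charact} holds for some $d_2\in L^\infty(\Om_k;\R^2)$ with $\partial_{\w_2}d_2=0$ exactly when $u\in\A$, and that such a $d_2$ is then uniquely determined by $u$.

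Next I would settle the correspondence between the two convergences. The constraint $\partial_{\w_2}\ut=0$ forces $\partial_2\ut=-\tfrac1{\sqrt3}\partial_1\ut$, while the second column of $\D\ut_\e A_\e^{-1}$ equals $\tfrac{1-\e}{\e\sqrt3}\partial_1\ut_\e+\tfrac1\e\partial_2\ut_\e$. Hence, whenever $\{\D\ut_\e A_\e^{-1}\}$ is bounded and $\partial_1\ut_\e\weakst\partial_1\ut$, multiplying the second column by $\e$ gives $\partial_2\ut_\e\weakst-\tfrac1{\sqrt3}\partial_1\ut=\partial_2\ut$, so that $\D\ut_\e\weakst\D\ut$. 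Together with a normalisation of the additive constant (making $\{\ut_\e\}$ equi-Lipschitz and equibounded, hence uniformly convergent along subsequences) this yields $\ut_\e\weakst\ut$ weakly* in $W^{1,\infty}(\Om_k;\R^2)$; conversely this convergence gives $\partial_1\ut_\e\weakst\partial_1\ut$, which is what Theorem \ref{thm1} processes.

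For the $\liminf$ inequality I would argue by cases. If $u\notin\A$ and $\ut_\e\weakst u$ has $\liminf_\e\I_\e(\ut_\e)<\infty$, then along a subsequence the energies are bounded, so Theorem \ref{thm1} produces a limit obeying \eqref{charact}; by the first paragraph this forces $|\partial_1 u|\le1$ on $\Om_k^-$ and $|\partial_1 u|\le\tfrac\lambda\rho$ on $\Om_k^+$, i.e.\ $u\in\A$, a contradiction, so $\liminf_\e\I_\e(\ut_\e)=+\infty=\I(u)$. If $u\in\A$, any $\ut_\e\weakst u$ with finite $\liminf$ admits a bounded subsequence, whose weak* limit coincides with $u$ by uniqueness, and then \eqref{gammaliminf} gives $\liminf_\e\I_\e(\ut_\e)\ge\ga^\lambda(\rho,k)=\I(u)$. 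For the recovery sequence the case $u\notin\A$ is trivial because $\I(u)=+\infty$; for $u\in\A$ I would set $\ut:=u$ and $d_2:=J\partial_1 u$, so that $(\partial_1\ut\,|\,d_2)$ satisfies \eqref{cohull} with $\partial_{\w_2}d_2=0$, and invoke Proposition \ref{thm2} to obtain $\{\ut_\e\}\subset\widetilde\A_{\rho,\e}(\Om_k)$ with $\D\ut_\e A_\e^{-1}\weakst(\partial_1\ut\,|\,d_2)$ and $\limsup_\e\I_\e(\ut_\e)\le\ga^\lambda(\rho,k)$. The convergence $\ut_\e\weakst u$ then follows from the second paragraph, and combining the two inequalities gives \eqref{eqthm3}.

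I expect the main, though modest, obstacle to be precisely this reconciliation of convergences: the quantity actually controlled by the energy and by the rigidity estimate is the sheared gradient $\D\ut_\e A_\e^{-1}$, not $\D\ut_\e$, and one must use the dimension-reduction constraint $\partial_{\w_2}\ut=0$ to show that the transverse derivative $\partial_2\ut$ is a slave of $\partial_1\ut$, thereby recovering genuine weak* $W^{1,\infty}$ convergence. The substantive analysis—the rigidity-based compactness and the interface-matching in the recovery construction—has already been done in Theorem \ref{thm1} and Proposition \ref{thm2}, so the proof of Theorem \ref{thm3} amounts to this assembly together with the convex-hull description of $\A$.
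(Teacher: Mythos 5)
Your proposal is correct and follows essentially the same route as the paper: the $\Gamma$-limit is assembled from the compactness and lower bound of Theorem \ref{thm1} together with the recovery construction of Proposition \ref{thm2}, with the constraint \eqref{charact} identifying the domain $\A$. The only difference is cosmetic: where the paper cites \cite[Theorem 4.1]{mm} for the existence of $d_2$, you build it explicitly as $d_2=J\partial_1 u$ from the elementary identification $\co(SO(2))=\{aI+bJ\colon a^2+b^2\le 1\}$ ($J$ the rotation by $\tfrac{\pi}{2}$), which is a valid and even simpler alternative in this two-dimensional setting, and your careful reconciliation of the sheared convergence $\D \ut_\e A_\e^{-1}\weakst(\partial_1\ut\,|\,d_2)$ with weak* convergence in $W^{1,\infty}(\Om_k;\R^2)$ spells out what the paper treats in one line.
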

\begin{proof}
The proof is divided into two parts.
\par
\emph{Liminf inequality.}
Let $ \ut_\e \in \widetilde\A_{\rho,\eps}(\Om_{k})$ be such that 
$\ut_\e\weakst  u$ weakly* in $W^{1,\infty}(\Om_k;\R^2)$. 
We have to prove that 
\bes
\I(u)\leq  \liminf_{\e \to 0^+} \I_\e(\ut_\e) \,.
\ees
We may assume that 
$\liminf_{\e \to 0^+} \I_\e(\ut_\e)  \le  C$. 
Then, by Theorem \ref{thm1}, there exist $\ut \in W^{1,\infty}(\Om_k;\R^2)$, 
$d_2\in L^{\infty}(\Om_k;\R^2)$  independent of $\w_2$, 
satisfying \eqref{charact}, 
such that $\D \ut_\e A_\e^{-1}$, up to subsequences, converges to $(\partial_1 \ut \, | \, d_2) $
 weakly* in $L^{\infty}(\Om_k;\Mdd)$ and \eqref{gammaliminf} holds.
Therefore $\partial_1u=\partial_1\ut$ a.e., and  the function $u$ does not depend on $\w_2$. 
Moreover, condition  \eqref{charact} implies that $u\in \A$.
\par
\emph{Limsup inequality.}
We have to show that for each $u\in W^{1,\infty}(\Om_k;\R^2)$
 there exists a sequence $\{\ut_\e\}\subset \widetilde\A_{\rho,\eps}(\Om_{k})$ such that 
$\ut_\e \weakst u$ weakly* in $W^{1,\infty}(\Om_k;\R^2)$ and 
\begin{equation}\label{ineq2}
\limsup_{\e\to 0^+} \I_\e (\ut_\e)\leq \I(u) \,.
\end{equation}
We can assume that $u\in\A$. 
It is possible to construct a measurable function  
$d_{2}\in L^{\infty}(\Om_k;\R^{2})$, independent of $\w_2$, such that 
\begin{equation*}
(\partial_1 u \, | \, d_2)\in 
\begin{cases}
\co(SO(2))     &  \text{a.e.\ in}\ \Om_k^-\,,\\
\co(\frac\lambda\rho SO(2))   &  \text{a.e.\ in}\ \Om_k^+\,. 
\end{cases}
\end{equation*}
For a detailed construction see e.g.\ \cite[Theorem 4.1]{mm}.
We now apply Proposition \ref{thm2} (with $\ut:=u$) to find a sequence 
$\{\ut_\e\}\subset  \widetilde\A_{\rho,\eps}(\Om_{k})$ such that
$\D \ut_\e A_\e^{-1}$ converges to 
$(\partial_1 u \, | \, d_{2})$ weakly* in $L^{\infty}(\Om_k;\Mdd)$, which 
implies $\nabla\ut_\eps\weakst\nabla u$ weakly* in $L^{\infty}(\Om_k;\Mdd)$
since $(\partial_1 \ut_{\e} \, | \, \partial_{\w_2} \ut_\e)$ converges to 
$(\partial_1 u \, | \,0)$. Moreover, \eqref{ineq2} holds. 
\end{proof}
\begin{remark}\label{rmk080213}
Recall the definition of the Delaunay triangulation in Section \ref{subsec:tri}.
In the degenerate cases we choose between two possible subdivisions of the quadrilateral cells,
see Figure \ref{fig-degen}.
We stress the fact that such a choice only influences the value of $\ga^\lambda(\rho,k)$, but not the qualitative analysis.
In particular, the weak limits of sequences $\ut_\e$ with equibounded energies in Theorem \ref{thm1} do not depend on such a choice, 
but only on the value of $\ut_\e$ at the points of the lattice, as one can compute.
\end{remark}
\section{Remarks and generalisations} \label{sec:rmk}\label{more-general}
In this section we mention some possible variations and extensions of the model,
whose proofs can be obtained from the previous ones with minor modifications.
\par
\bigskip
Recall the definition of the total interaction energy \eqref{eng-eps};
the energy relative to a bond across the interface is as in \eqref{eng-int}.
Different choices would be possible for which the previous results would still hold.
For example, one may consider
\bes
\begin{split}
\E_{\eps}^{\lambda,\tau}(u_\eps,\rho,k) & :=
\tfrac{1}{2}\!\!\! \sum_{\substack{\NN{x}{y}\\x\in \L_{1,\eps}^-(k) \\y\in\L_{1,\eps}^-(k)}}
\left(\Big|\frac{u_\eps(x)-u_\eps(y)}{\e}\Big|-1\right)^2 +
\tfrac{1}{2}\!\!\! \sum_{\substack{\NN{x}{y}\\x\in \L_{\rho,\eps}^+(k)\\y\in\L_{\rho,\eps}^+(k)}} 
\left(\Big|\frac{u_\eps(x)-u_\eps(y)}{\e}\Big|-\lambda\right)^2 \\
&+\sum_{\substack{\NN{x}{y}\\x\in \L_{1,\eps}^-(k)\\y\in\L_{\rho,\eps}^+(k)}} 
\left(\Big|\frac{u_\eps(x)-u_\eps(y)}{\e}\Big|-\mu\right)^2 
\end{split}
\ees
with $\mu>0$. 
\par
\bigskip
A possible modification concerns the distance between the two phases in the reference configuration.
Indeed, one can replace $\LL_1^-$ by
$\LL_\rho^- := \{ (-d(\rho)+\xi_1)\w_1+\xi_2\w_2\colon \xi_1,\xi_2\in\Z\,,\ \xi_1\le0\}$ with $d(\rho)>\frac13$ and $d(1)=1$.
(Before we considered for simplicity the case $d\equiv1$.)
The definition of $\LL_\rho^+$ is unchanged.
Here, the condition $d>\frac13$ guarantees that no points of $\LL_\rho^+$
lie in the interior of the circumcircle of any triangle with vertices in $\LL_\rho^-$.
This new choice changes the bonds between the neighbouring atoms across the interface
but the results of Sections \ref{sec:minimum} and \ref{sec:lim} still hold
(though the value of $\ga^\lambda(\rho,k)$, see \eqref{gammah}, will be different).
\par
\subsection{The case of next-to-nearest neighbours}
\begin{figure}[b]
\centering
\includegraphics[height=4cm]{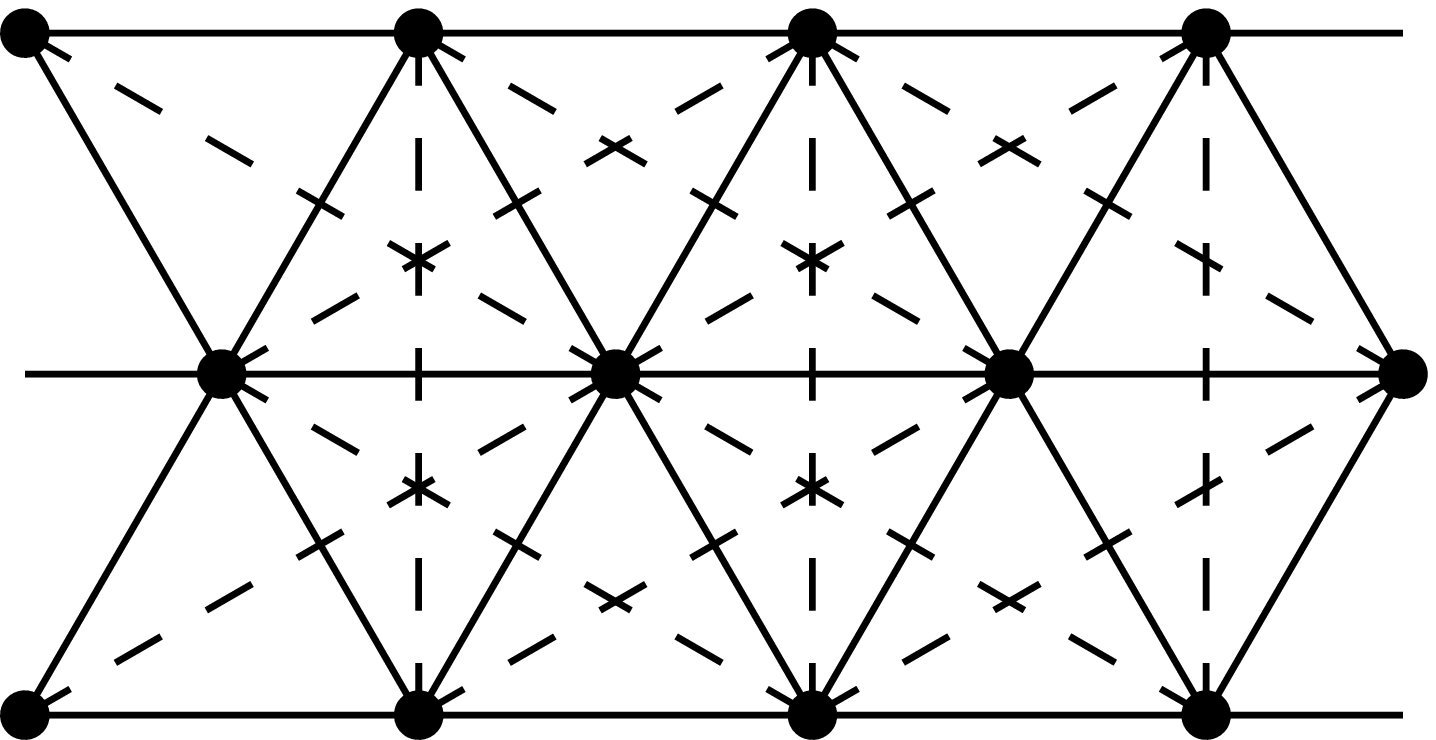}
\caption{Two-dimensional hexagonal Bravais lattice with nearest (solid) and next-to-nearest neighbours (dashed).
(Only the bonds between displayed atoms are presented.)}
\label{fig-nnn}
\end{figure}
With the same approach as above we can study a model with nearest and next-to-nearest neighbours.
In the two-dimensional hexagonal Bravais lattice generated by the vectors $\w_1$ and $\w_2$, 
two next-to-nearest-neigh\-bour\-ing atoms have distance $\sqrt3$, see Figure \ref{fig-nnn}.
\par
The definition of the next-to-nearest neighbours across the interface in $\LL_\rho$, see \eqref{1307251}--\eqref{1307253}, is
based on the notion of Delaunay pretriangulation (Definition \ref{def:Del-pre}).
In order to choose the next-to-nearest neighbours of a point $x\in\LL_\rho$, we drop its nearest neighbours from the lattice, setting
$$
\LL^*_\rho(x):=\LL_\rho\setminus\{y\colon \NN{x}{y}\} \,,
$$
and we consider the Voronoi diagram $\{C_x^*(y)\}_{y\in\LL_\rho^*(x)}$ associated with $\LL_\rho^*(x)$.
The next-to-nearest neighbours of $x$ are then the points $y\in\LL_\rho^*(x)$, $y\neq x$, such that $\H^1(C_x^*(x)\cap C_x^*(y))>0$;
in this case we write $\NNN{x}{y}$.
Away from the interface such definition induces the standard notion of next-to-nearest neighbours as recalled above.
\par
In $\LL_{\rho,\eps}(k)$, see \eqref{1307254}, two points $x,y$
are next-to-nearest neighbours if $x/\eps$, $y/\eps$ fulfill the corresponding property in the lattice $\LL_\rho$.
For $\eps>0$, $\lambda\in(0,1)$, $k\in\N$, $\rho\in(0,1]$, and $C_1,C_2>0$,
the energy of an admissible deformation $u_\eps\in\A_{\rho,\eps}$, see \eqref{adm}, is now
\bes
\begin{split}
\F_{\eps}^\lambda(u_\eps,\rho,k) &:=
\tfrac{C_1}{2}\!\!\! \sum_{\substack{\NN{x}{y}\\x\in \L_{1,\eps}^-(k) \\y\in\L_{\rho,\eps}(k)}}
\left(\Big|\frac{u_\eps(x){-}u_\eps(y)}{\e}\Big|-1\right)^2 +
\tfrac{C_1}{2}\!\!\! \sum_{\substack{\NN{x}{y}\\x\in \L_{\rho,\eps}^+(k)\\y\in\L_{\rho,\eps}(k)}} 
\left(\Big|\frac{u_\eps(x){-}u_\eps(y)}{\e}\Big|-\lambda\right)^2
\\
& + \tfrac{C_2}{2}\!\!\! \sum_{\substack{\NNN{x}{y}\\x\in \L_{1,\eps}^-(k) \\y\in\L_{\rho,\eps}(k)}}
\left(\Big|\frac{u_\eps(x){-}u_\eps(y)}{\e}\Big|-\sqrt3\right)^2 +
\tfrac{C_2}{2}\!\!\! \sum_{\substack{\NNN{x}{y}\\x\in \L_{\rho,\eps}^+(k)\\y\in\L_{\rho,\eps}(k)}} 
\left(\Big|\frac{u_\eps(x){-}u_\eps(y)}{\e}\Big|-\lambda\sqrt3\right)^2 \,.
\end{split}
\ees
%
All the results contained in the paper can be generalised to this case with minor modifications to the proofs.
\par
\begin{figure}[b]
\centering
\includegraphics[height=4cm]{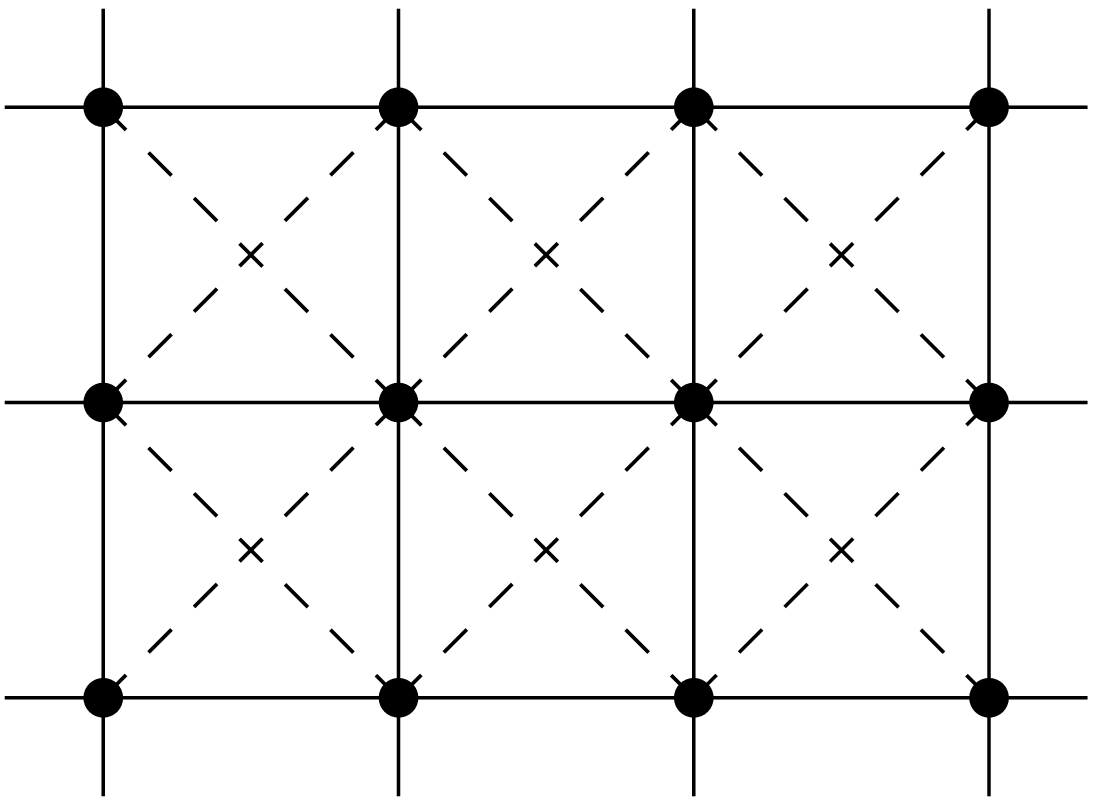}
\caption{
Square lattice with nearest (solid) and next-to-nearest neighbours (dashed).
}
\label{fig-square}
\end{figure}
Our approach can be applied also to other lattices enjoying the rigidity property,
e.g.\ the Bravais square lattice generated by the vectors $(1,0)$ and $(0,1)$
with nearest- and next-to-nearest-neighbour interactions.
In this case the nearest neighbours are just the edges of the Delaunay pretriangulation,
while the next-to-nearest neighbours are defined as for the hexagonal lattice.
This induces the standard notion away from the interface,
i.e., two nearest-neigh\-bour\-ing atoms have distance $1$
and two next-to-nearest-neigh\-bour\-ing atoms have distance $\sqrt2$,
see Figure \ref{fig-square}.
\subsection{The role of the non-interpenetration condition}
\label{rmk:non-int}
One of the points where we exploit the non-interpenetration condition, cf.\ \eqref{adm},
is the application of the rigidity estimate in the proof of Proposition \ref{lowerbound},
where we show that in the defect-free case $\ga^\lambda(1,k)$ grows as $k^2$.
If  instead the non-interpenetration is not assumed, then we can show that the sequence of 
minima of $\{  \I_{\e}\}$ grows linearly in $k$ also in the dislocation-free case $\rho=1$. 
Indeed, by introducing suitable reflections one 
can define deformations $u_\eps$ for which the energy of the bonds away from the interface is zero, while 
the total interaction energy grows linearly in $k$. 
This can be done by mapping all the atoms of $\L_{1,\eps}^-(k)$ to a triangle of side one
and all the atoms of $\L_{1,\eps}^+(k)$ to a triangle of side $\lambda$.
Alternatively, one can also enforce that $\nabla u_\eps=I$ in   
$\L_{1,\eps}^-(k)$ and $\nabla u_\eps=\lambda I$ in $\L_{1,\eps}^+(k)$ away from the 
interface like in Figure \ref{fig-counter}. 
\begin{figure}[b]
\centering
\includegraphics[width=.8\textwidth]{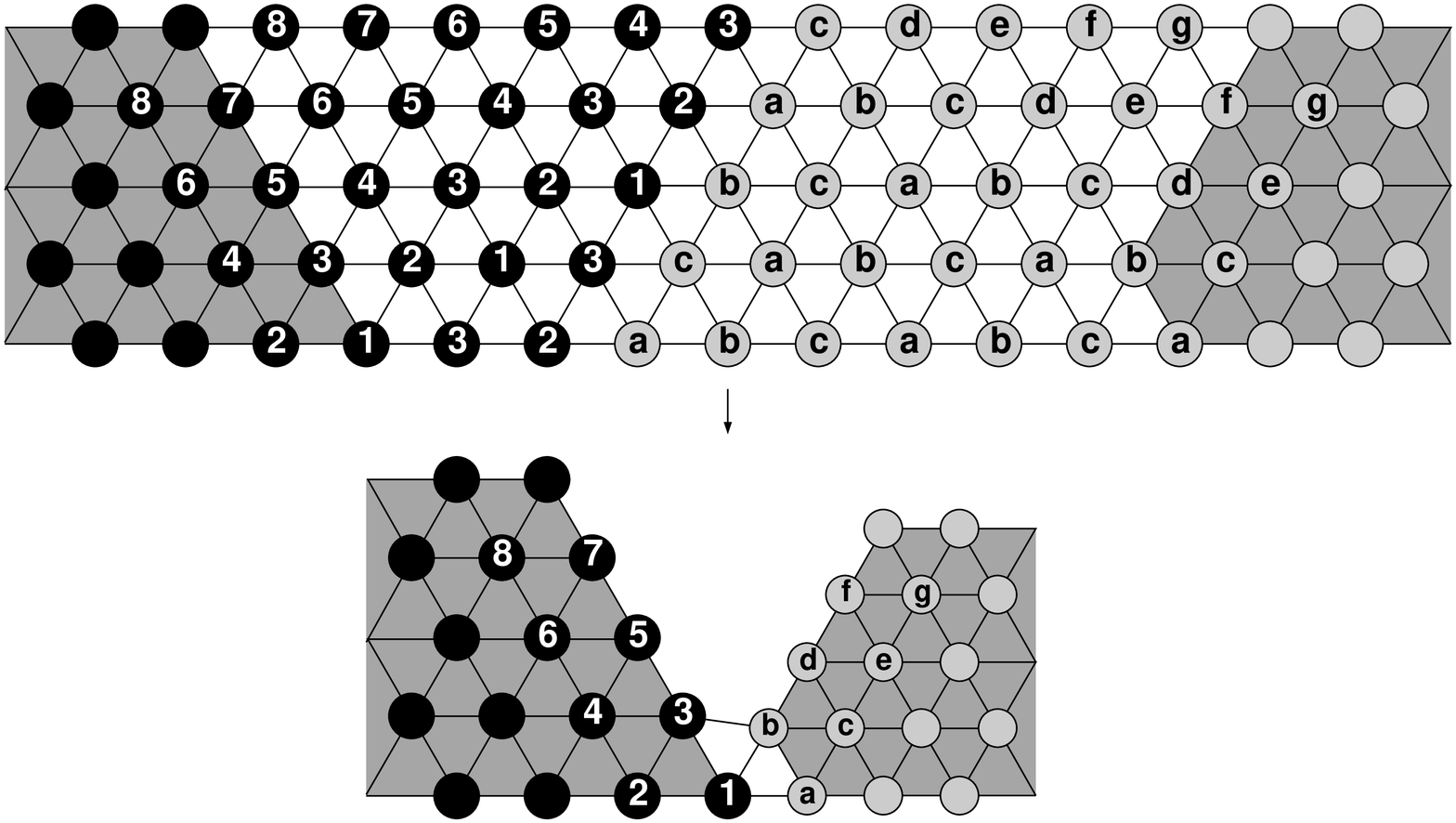}
\caption{A deformation that violates the non-interpenetration condition.
All points labelled with the same number or the same letter  are mapped 
to the same point. Moreover $\nabla u_\eps=I$ in the grey zone on the left and 
$\nabla u_\eps=\lambda I$ in the grey zone on the right part of the figure.
\\
When $k$ increases to $k+1$, the counterexample is extended in such a way that 
all the labels shown in the figure for $k$ are changed by moving the labels of the black points 
by one step to the left and the labels of the grey points by one step to the right. 
Accordingly, the grey zones move by one step to the left and right, respectively. 
At the interface new labels 1, 2, 3 and a, b, c are introduced consistently.
\\
For such deformations, the total interaction energy grows linearly in $k$.
}
\label{fig-counter}
\end{figure}
\section{A three-dimensional model}\label{3d}
All results presented so far essentially rely on the rigidity of the lattice
and can be generalised to three dimensions by choosing a lattice whose unit cells are rigid polyhedra.
For the sake of simplicity, we present here the extension of the model to the specific case of a face-centred cubic lattice,
which is related to the diamond structure of silicon nanowires \cite{Schmidtetal2010}.
We highlight the points where the treatment of the three-dimensional model is different from the two-dimensional case,
referring to the previous sections for all other details.
Other choices of rigid lattices are possible, including e.g.\ 
the (non-Bravais) hexagonal close-packed lattice, where the cells are also tetrahedra and octahedra.
The following discussion can be adapted to the latter case with minor modifications.
\par
The face-centred cubic lattice is the Bravais lattice generated by the vectors $\wu_1:=(1,0,0)$,
$\wu_2:=(\frac12,\frac12,0)$, and $\wu_3:=(0,\frac12,\frac12)$.
The nearest neighbours are those couples with distance $\frac{\sqrt2}{2}$;
such bonds are associated with the Delaunay pretriangulation (Definition \ref{def:Del-pre}),
which consists in a subdivision of the space into regular tetrahedra and octahedra, see Figure \ref{fig:fcc}. 
The diagonals of the octahedra correspond instead to next-to-nearest neighbours;
the distance between two  next-to-nearest neighbours is one.
\par
\begin{figure}[b]
\centering
\subfloat[]{
\includegraphics[width=.25\textwidth]{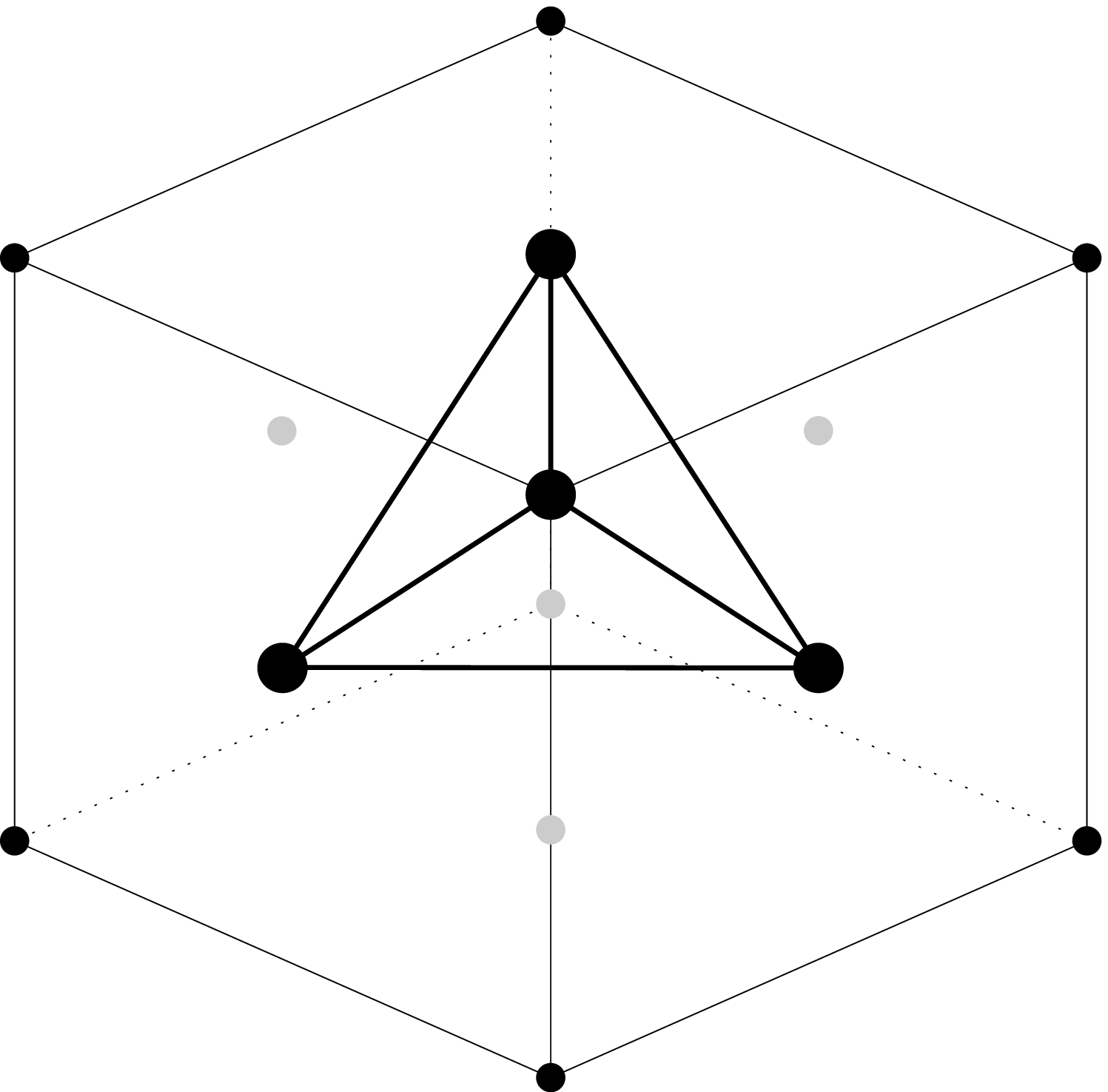}
\label{fig:subfig1}
}
\hfill
\subfloat[]{
\includegraphics[width=.25\textwidth]{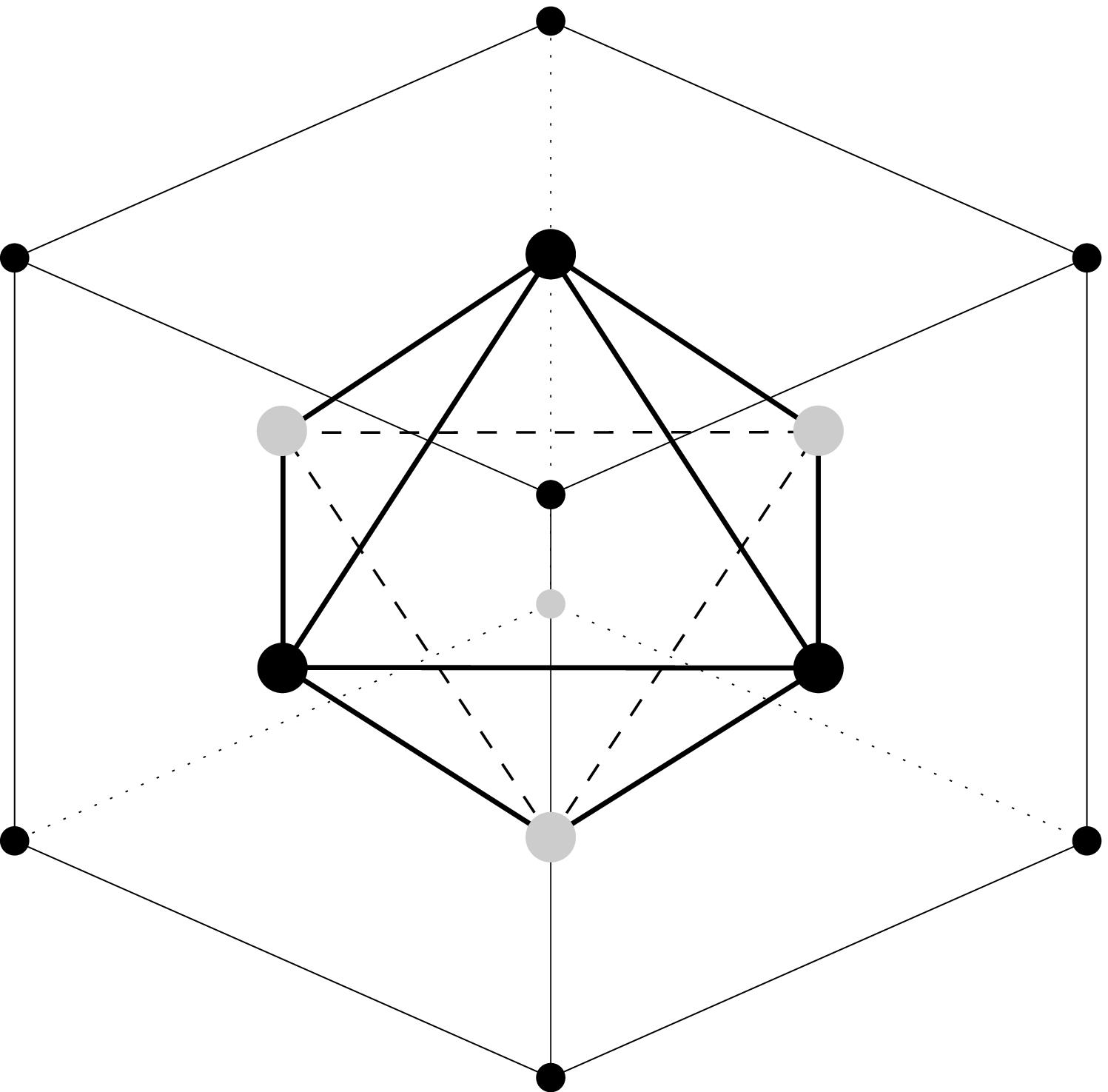}
\label{fig:subfig3}
}
\hfill
\subfloat[]{
\includegraphics[width=.25\textwidth]{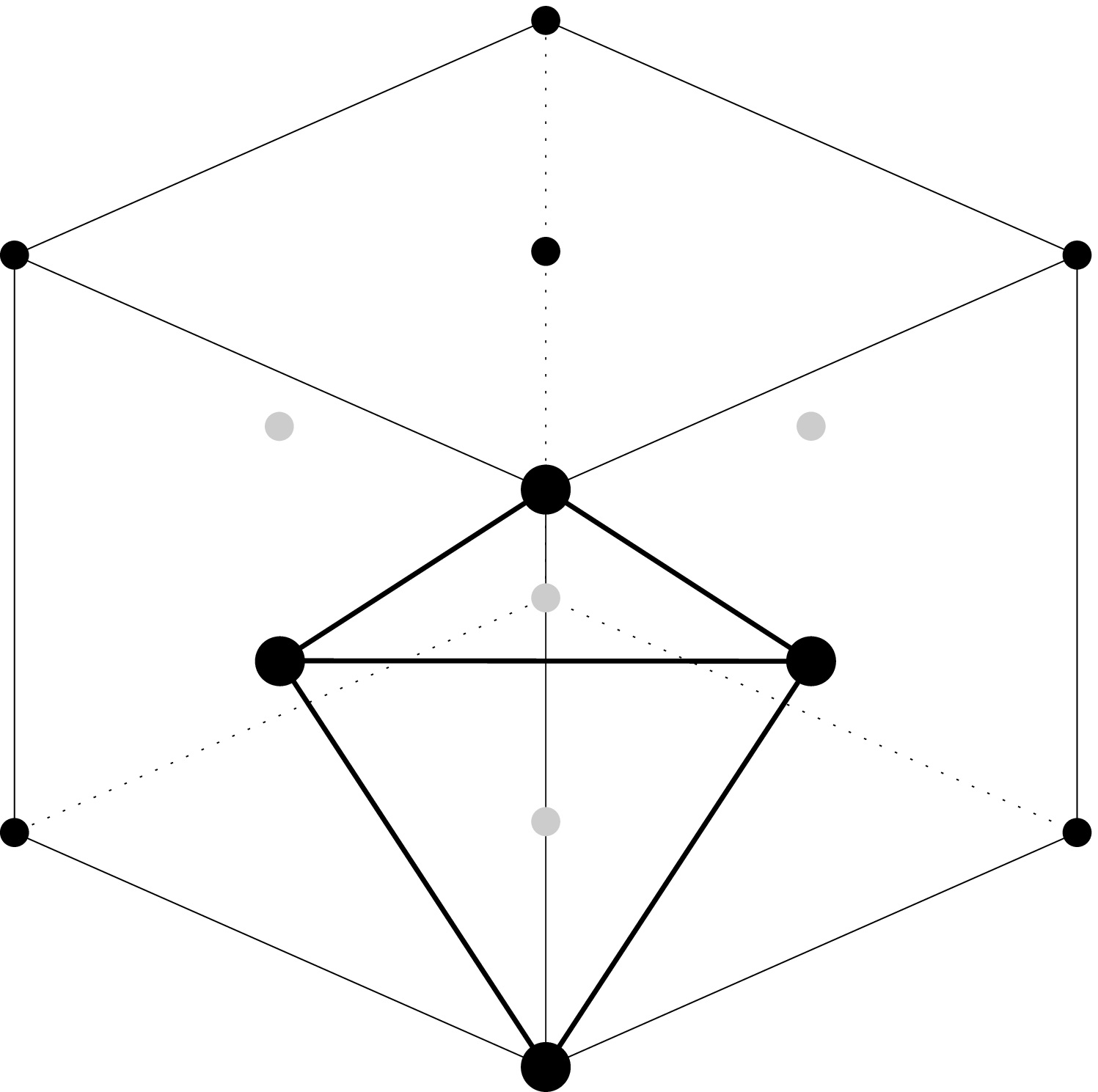}
\label{fig:subfig2}
}
\caption{Unit cell of the face-centred cubic lattice:
the nearest-neighbour structure of the atoms provides a subdivision of the space into tetrahedra and octahedra.
(\textsc{a}) tetrahedron;
(\textsc{b}) octahedron;
(\textsc{c}) quarter of octahedron.
Grey dots denote points lying on the hidden facets.
}\label{fig:fcc}
\end{figure}
Both the tetrahedron and the octahedron are rigid convex polyhedra.
By rigid convex polyhedron we mean that if the lenghts of the edges of the polyhedron are fixed,
then the polyhedron is determined up to rotations and translations,
under the assumption that the polyhedron itself is convex.
We recall that by the so-called Cauchy Rigidity Theorem, a convex polyhedron is rigid if and only if its facets are triangles.
\par
For fixed $\rho\in(0,1]$, the biphase atomistic lattice is the following:
\beas
\L_1^- &:=& \{ \xi_1\wu_1+\xi_2\wu_2+\xi_3\wu_3 \colon \xi_1,\xi_2,\xi_3\in\Z \,, \ \xi_1<0 \} \,, \\
\L_\rho^+ &:=& \{ \xi_1\wu_1+\xi_2\wu_2+\xi_3\wu_3 \colon \xi_1,\xi_2,\xi_3\in\rho\Z \,, \ \xi_1\ge0 \} \,, \\
\LL_\rho&:=&\LL_1^-\cup\LL_\rho^+\,.
\eeas
Notice that the interfacial planes are two-dimensional hexagonal Bravais lattices, see Figure \ref{fig:fcc-int}.
\begin{figure}
\centering
\includegraphics[width=.7\textwidth]{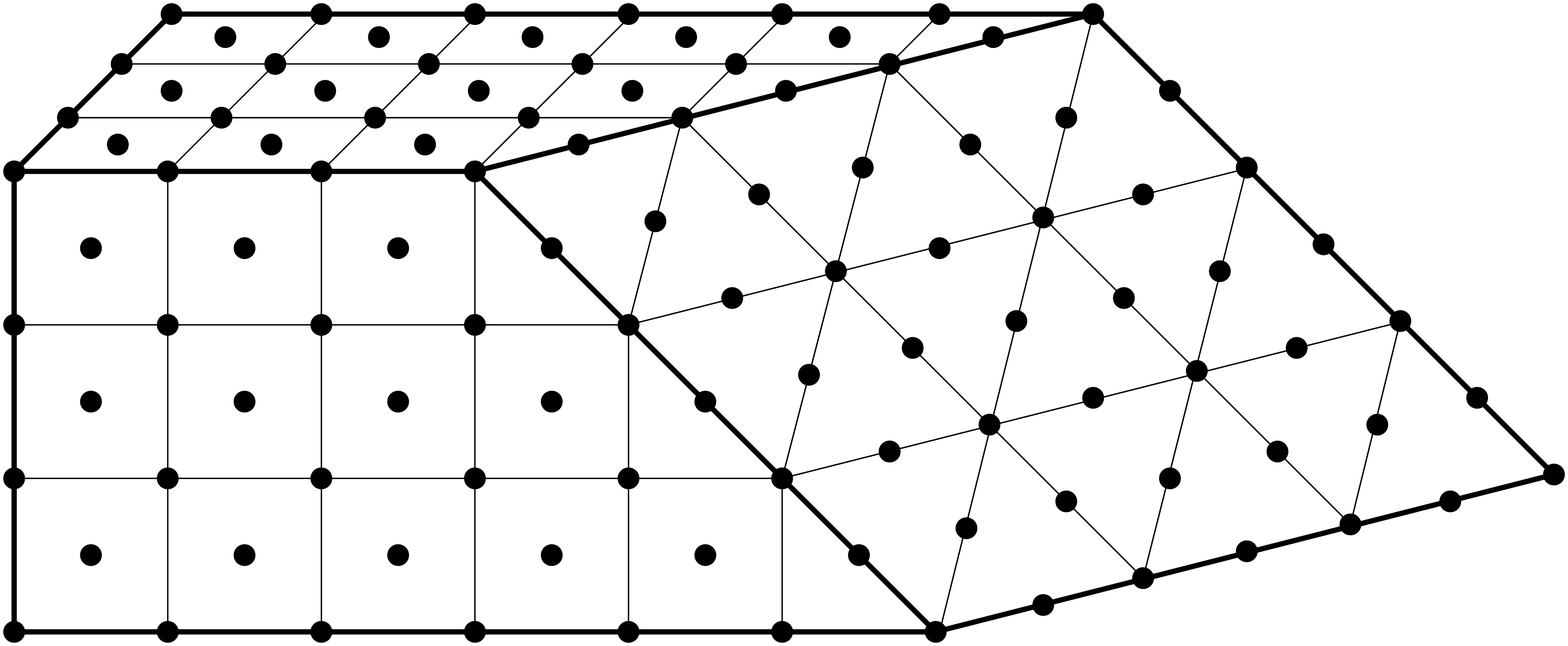}
\caption{By cutting the face-centred cubic lattice on a certain transverse plane, one obtains a two-dimensional hexagonal Bravais lattice.
Notice that the lines indicate the geometry of the setting and not the bonds between nearest neighbours.}
\label{fig:fcc-int}
\end{figure}
For $\rho\neq1$ the nearest neighbours are chosen as in the two-dimensional case.
Specifically, we consider the (unique) Delaunay pretriangulation $\TT'_\rho$ (Definition \ref{def:Del-pre}),
which is rigid away from the interface.
The partition $\TT'_\rho$ may contain polyhedra with some quadrilateral facets
across the interface between the two materials:
in this case we say that $\TT'_\rho$ is degenerate and refine it further, in order to obtain a tessellation in rigid polyhedra.
\par
More precisely, a polyhedron of $\TT'_\rho$ across the interface is one of the following:
\begin{enumerate}
\item a tetrahedron, irregular if $\rho\neq1$ (with three vertices in one of the two  lattices and one vertex in the other one, or two vertices in each of the lattices),
\item a pyramid with trapezoidal base (with three vertices in one of the two  lattices and two vertices in the other one),
\item a pyramidal frustum with triangular base (three vertices in each of the lattices),
\item an octahedron, irregular if $\rho\neq1$ (three vertices in each of the lattices).
\end{enumerate}
This can be easily seen by recalling that the interfacial atoms lie on two parallel planes
consisting of two-dimensional hexagonal Bravais lattices, with parallel primitive vectors.
If $\rho=1$ we only have cases 1 and 4.
\par
In cases 1 and 4 we leave the cell as it is, without introducing further bonds.
In cases 2 and 3, the cell is not rigid since it contains (at least) one quadrilateral facet;
therefore we subdivide each of the quadrilateral facets into two triangles, thus adding a couple of nearest neighbours;
we regard the resulting triangles as separate facets.
In case 2, the pyramid is then divided into two (irregular) tetrahedra; in case 3, we obtain one (irregular) octahedron.
Notice that there are different ways of subdividing the polyhedra;
we choose the same for all of them.
\par
In this way we define a partition in (possibly irregular) tetrahedra and octahedra and call it the rigid Delaunay tessellation associated to $\LL_\rho$, denoted by $\TT_\rho$.
The nearest neighbours are the extrema of each edge of any polyhedron of the subdivision.
We remark that the same procedure was followed in the two-dimensional case (Section \ref{sec:not}), where the Delaunay pretriangulation
may contain quadrilaterals across the interface.
\par
The rigid Delaunay tessellation $\TT_\rho$ determines the bonds that enter the definition of the energy.
Given $L>0$, $\eps\in(0,1]$, and $k\in\N$, we define 
$\L_{\rho,\eps}(k):=(\eps\LL_\rho)\cap\overline\Om_{k\eps}$, where
$$
\Om_{k\eps} := \{ \xi_1\wu_1+\xi_2\wu_2+\xi_3\wu_3\colon \xi_1\in(-L,L)\,,\ \xi_2,\xi_3\in(0, k\eps) \} \,.
$$
Like in the two-dimensional case, we set
\beas
\Om_{k\eps}^- &:=& \{ \xi_1\wu_1+\xi_2\wu_2+\xi_3\wu_3\colon \xi_1\in(-L,0)\,,\ \xi_2,\xi_3\in(0, k\eps) \} \,,\\
\Om_{k\eps}^+ &:=& \{ \xi_1\wu_1+\xi_2\wu_2+\xi_3\wu_3\colon \xi_1\in(0,L)\,,\ \xi_2,\xi_3\in(0, k\eps) \} \,,\\
\L_{1,\eps}^-(k) &:=& \{\xi_1\wu_1+\xi_2\wu_2+\xi_3\wu_3\in\L_{\rho,\eps}(k)\colon \xi_1<0\} \,,\\
\L_{\rho,\eps}^+(k) &:=& \{\xi_1\wu_1+\xi_2\wu_2+\xi_3\wu_3\in\L_{\rho,\eps}(k)\colon \xi_1\ge0\} \,.
\eeas
Two points $x,y$ in any of the previous lattices
are nearest neighbours if $x/\eps$, $y/\eps$ fulfill the corresponding property in the lattice $\LL_\rho$.
For every deformation $u_\eps\colon\LL_{\rho,\eps}(k)\to\R^3$ we define the total interaction energy
\bes
\E_{\eps}^\lambda(u_\eps,\rho,k) :=
\tfrac{1}{2}\!\!\! \sum_{\substack{\NN{x}{y}\\x\in \L_{1,\eps}^-(k) \\y\in\L_{\rho,\eps}(k)}}
\left(\Big|\frac{u_\eps(x)-u_\eps(y)}{\e}\Big|-1\right)^2 +
\tfrac{1}{2}\!\!\! \sum_{\substack{\NN{x}{y}\\x\in \L_{\rho,\eps}^+(k)\\y\in\L_{\rho,\eps}(k)}} 
\left(\Big|\frac{u_\eps(x)-u_\eps(y)}{\e}\Big|-\lambda\right)^2 \,,
\ees
where $\lambda\in(0,1)$.
\par
As before, in order to define the admissible deformations, we introduce piecewise affine functions.
Therefore, we need to refine $\TT_\rho$ to a proper Delaunay triangulation.
However, we do not change the definition of the nearest neighbours, i.e., we do not introduce new interactions in the energy.
Given a (possibly irregular) octahedron of $\TT_\rho$, we divide it into four irregular tetrahedra
by cutting it along one of the three diagonals.
We choose the diagonal starting from the vertex with the largest $x_1$-coordinate;
if two or three vertices have the same largest $x_1$-coordinate, we take among them the point with largest $x_2$-coordinate;
if two of such vertices have also the same largest $x_2$-coordinate, we take the one with the largest $x_3$-coordinate.
By repeating the process on every octahedron of $\TT_\rho$, we obtain a triangulation that we denote by $\TT_\rho^{(1)}$.
Other two triangulations $\TT_\rho^{(2)}$ and $\TT_\rho^{(3)}$ are obtained by repeating the same procedure,
but with different ordering of the indices, namely $x_2,x_3,x_1$ and $x_3,x_1,x_2$ respectively.
Given a function $u\colon \L_\rho\to\R^3$, we denote by $u^{(1)}$, $u^{(2)}$, and $u^{(3)}$ its piecewise affine interpolations
with respect to the triangulations $\TT_\rho^{(1)}$, $\TT_\rho^{(2)}$, and $\TT_\rho^{(3)}$, respectively.
\par
We define also $\TT_{\rho,\eps}:=\{\eps T\colon T\in\TT_\rho\}$ and $\TT_{\rho,\eps}^{(i)}:=\{\eps T\colon T\in\TT_\rho^{(i)}\}$
for $i=1,2,3$.
The set of admissible deformations is
\be\label{ad-3d}
\begin{split}
\A_{\rho,\eps}(\Om_{k\eps}):= \big\{ u_\eps\in C^0(\overline\Om_{k\eps};\R^3) \colon & u_\eps \ \text{piecewise affine,}\\ 
& \D u_\eps \ \text{constant on}\ \Om_{k\eps}\cap T\ \forall\, T\in\TT_{\rho,\eps}^{(1)}\,, \\
& \det \D u_\eps>0 \ \text{a.e.\ in}\ \Om_{k\eps}\,, \\
& u_\eps(\mathcal P) \ \text{is convex}\ \forall\, \mathcal P\in\TT_{\rho,\eps} \big\} \,.
\end{split}
\ee
As usual, the restriction of $u_\eps\in\A_{\rho,\eps}(\Om_{k\eps})$ to $\L_{\rho,\eps}(k)$ is still denoted by $u_\eps$.
Analogous definitions hold for $\Om_{k,\infty}$, $\L_{\rho,\infty}(k)$, and $\A_{\rho,\infty}(\Om_{k,\infty})$, see \eqref{1308061}--\eqref{1307232} and \eqref{130806}.
We will see that the limiting functional is independent of the choice of the triangulation $\TT_{\rho,\eps}^{(1)}$ in \eqref{ad-3d},
as suggested by the following remark.
\par
\begin{remark}\label{rmk:3d}
The assumption of convexity on the images of the octahedra of $\TT_{\rho,\eps}$ is needed to enforce rigidity:
without such an assumption an octahedron could be compressed without paying any energy. 
In fact, an elementary proof shows that $\det\D u_\eps^{(1)}>0$ and $u_\eps(\mathcal P)$ convex for every $\mathcal P\in\TT_{\rho,\eps}$
if and only if $\det\D u_\eps^{(i)}>0$ for every $i$.
Therefore, the notion of rigidity used in \eqref{ad-3d} is independent of the choice of the triangulation $\TT_{\rho,\eps}^{(1)}$
provided the image of each octahedron is assumed to be convex.
\end{remark}
The rigidity estimate of Lemma \ref{lemma-equiv} can be generalised to tetrahedra with a similar proof.
In the following, we set
$\w_1:=(1,0,0)$, $\w_2:=(\frac{1}{2},\frac{\sqrt{3}}{2},0)$, $\w_3:=\w_2-\w_1$,
$\w_4:=(\frac{1}{2},\frac{\sqrt{3}}{6},\frac{\sqrt{6}}{3})$,
$\w_5:=\w_4-\w_2$, and $\w_6:=\w_4-\w_1$.
\begin{lemma}\label{lemma:tetra}
There exists $C>0$ such that 
\bes
\dist^2(F,SO(3)) \leq  C \sum_{i=1}^6( |F\w_i|-1)^2  \quad \text{for every} \ F\in GL^+(3) \,. 
\ees
\end{lemma}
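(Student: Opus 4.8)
The plan is to follow the proof of Lemma~\ref{lemma-equiv}, replacing the three edges of the equilateral triangle by the six unit edges $\w_1,\dots,\w_6$ of the regular tetrahedron with vertices $0,\w_1,\w_2,\w_4$ (indeed $\w_3=\w_2-\w_1$, $\w_5=\w_4-\w_2$, $\w_6=\w_4-\w_1$, and one checks $|\w_i|=1$ for all $i$). Writing $\delta_i:=|F\w_i|-1$ and $\delta:=(\delta_1,\dots,\delta_6)$, the right-hand side is $|\delta|^2$. Since $\{\w_1,\w_2,\w_4\}$ is a basis of $\R^3$, equivalence of norms on $\Mtt$ provides a constant with
\[
\dist^2(F,SO(3))\le|F-I|^2\le C\big(|(F-I)\w_1|^2+|(F-I)\w_2|^2+|(F-I)\w_4|^2\big),
\]
so it is enough to control the three terms on the right by $|\delta|^2$.

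Both $\dist(\cdot,SO(3))$ and the six lengths $|F\w_i|$ are invariant under left multiplication by a rotation, so after replacing $F$ by $RF$ for a suitable $R\in SO(3)$ I may assume that $F\w_1=(1+\delta_1)\w_1$, that $F\w_2$ lies in the plane $\{x_3=0\}$ with positive second coordinate, and that $(F\w_4)_3>0$. The last normalisation is legitimate precisely because $F\in GL^+(3)$: the matrix with columns $\w_1,\w_2,\w_4$ has positive determinant, so orientation preservation fixes the sign of $(F\w_4)_3$, exactly as $\sin\theta>0$ was used in the planar case. In this frame the three image vectors $F\w_1,F\w_2,F\w_4$ carry six free real coordinates, subject to the six edge-length relations $|F\w_i|^2=(1+\delta_i)^2$.

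The core of the argument is that these coordinates, and hence $F$, depend smoothly on $\delta$ near $\delta=0$, with $F=I$ at $\delta=0$. This is where the rigidity of the simplex is used: the map from the six coordinates of $(F\w_1,F\w_2,F\w_4)$ to the six squared edge lengths has invertible differential at the reference configuration (infinitesimal rigidity of the nondegenerate tetrahedron), so the inverse function theorem furnishes a smooth map $\delta\mapsto F(\delta)$ with $F(0)=I$. Each $|(F-I)\w_j|^2$ is then a smooth nonnegative function of $\delta$ vanishing at $\delta=0$; its gradient there is zero, so a second-order Taylor expansion yields
\[
\dist^2(F,SO(3))\le C|\delta|^2+o(|\delta|^2),
\]
which, being rotation invariant, proves the claim for every $F\in GL^+(3)$ with $|\delta|^2$ small. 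I expect the verification that this differential is invertible --- equivalently, that the regular tetrahedron admits no nontrivial infinitesimal flex preserving all edge lengths to first order --- to be the main point, though it reduces to a routine finite linear computation.

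The remaining regimes are handled as in Lemma~\ref{lemma-equiv}. For large energy, the crude estimate $|(F-I)\w_j|^2\le 2(1+\delta_j)^2+2$ gives $\dist^2(F,SO(3))\le C(1+\delta_1^2+\delta_2^2+\delta_4^2)\le C(1+|\delta|^2)$, which already yields the bound whenever $|\delta|^2\ge1$. For intermediate values, fix $\eta>0$: on $\{F\in GL^+(3)\colon \eta\le|\delta|^2\le1\}$ the lengths $|F\w_i|$ are bounded, so $F$ ranges in a bounded set; on its compact closure $\dist^2(F,SO(3))$ is continuous while $|\delta|^2$ is continuous and bounded below by $\eta$, whence the ratio $\dist^2(F,SO(3))/|\delta|^2$ is bounded. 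Combining the three cases gives a single constant $C$ and completes the proof.
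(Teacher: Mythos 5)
Your argument is correct and is essentially the paper's own: the paper gives no proof of this lemma beyond the remark that Lemma~\ref{lemma-equiv} ``can be generalised to tetrahedra with a similar proof'', and your three-regime scheme (second-order Taylor expansion at $\delta=0$ resting on the infinitesimal rigidity of the regular tetrahedron, the crude bound for $|\delta|^2\ge 1$, and compactness on the intermediate set) is precisely that generalisation, including the correct use of $\det F>0$ to fix the sign of $(F\w_4)_3$ in analogy with $\sin\theta>0$. The one point to tighten is that the inverse function theorem is only local, so in the small-$\delta$ regime you should note that the normalised $F$ is \emph{globally} determined by $\delta$ --- solve the six length equations triangularly for $F\w_1=(1+\delta_1)\w_1$, then for $F\w_2$ (two equations, second coordinate positive), then for $F\w_4$ (three equations, third coordinate positive), the three-dimensional analogue of the explicit formulae \eqref{dist-estimate}--\eqref{dist-estimate2} --- which is what rules out a normalised $F$ far from $I$ with tiny $\delta$ that would escape both your Taylor regime and your compactness regime.
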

We prove the same estimate for each octahedron, subdividing it into four tetrahedra and using Remark \ref{rmk:3d} and Lemma \ref{lemma:tetra}.
In the following lemma we consider the octahedron $\mathcal O$ generated by the points
$P_1:=(0,0,0)$, $P_2:=(1,0,0)$, $P_3:=(0,1,0)$, $P_4:=(1,1,0)$, $P_5:=(\frac12,\frac12,\frac{\sqrt{2}}{2})$, 
and $P_6:=(\frac12,\frac12,-\frac{\sqrt{2}}{2})$.
%
\begin{lemma}
There exists $C>0$ such that
\bes
\dist^2(\D u,SO(3)) \leq  C \!\!\sum_{\NN{P_i}{P_j}}( |\D u(P_i-P_j)|-1)^2
\ees
for every $u\in C^0(\mathcal O;\R^3)$ such that $u$ is piecewise affine
with respect to the triangulation determined by cutting $\mathcal O$ along the vector $(1,1,0)$,
$\det\D u>0$,
and $u(\mathcal O)$ is convex.
\end{lemma}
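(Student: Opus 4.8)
The plan is to follow the scheme of Lemma~\ref{lemma-equiv}, reducing the estimate to the tetrahedral rigidity of Lemma~\ref{lemma:tetra} and then using the convexity hypothesis to control the single bond that does not enter the right-hand side. Cutting $\O$ along $(1,1,0)$ means subdividing it along the diagonal $P_1P_4$ into the four tetrahedra $T_1=P_1P_4P_2P_5$, $T_2=P_1P_4P_5P_3$, $T_3=P_1P_4P_3P_6$, $T_4=P_1P_4P_6P_2$, on each of which $\D u$ is a constant matrix $F_m\in GL^+(3)$ (its positivity being the assumption $\det\D u>0$). Every $T_m$ has exactly five edges of unit length, all of them nearest-neighbour bonds contributing to the right-hand side $G:=\sum_{\NN{P_i}{P_j}}(|\D u(P_i-P_j)|-1)^2$, and the single edge $P_1P_4$ of length $\sqrt2$, which is shared by all four tetrahedra and does \emph{not} appear in $G$. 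Applying to each (irregular) $T_m$ the analogue of Lemma~\ref{lemma:tetra}, proved in the same way since every $3$-simplex is rigid, and using $F_m(P_1-P_4)=u(P_1)-u(P_4)$, one gets
\[
\dist^2(F_m,SO(3))\le C\Big[G+\big(|u(P_1)-u(P_4)|-\sqrt2\big)^2\Big]\qquad(m=1,\dots,4).
\]
Thus the whole statement reduces to controlling the \emph{diagonal defect} $(|u(P_1)-u(P_4)|-\sqrt2)^2$ by $C\,G$ on the set of admissible maps.

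The large-energy regime is handled exactly as in Lemma~\ref{lemma-equiv}. From a suitable vertex of each $T_m$ there emanate three unit edges whose difference vectors form a basis of $\R^3$ (for $T_1$ one may take $P_2-P_1$, $P_5-P_1$, $P_4-P_2$), so that $|F_m|\le C(1+\sqrt G)$ and hence $\dist^2(F_m,SO(3))\le C(1+G)$; this already yields the claim whenever $G\ge1$, and it also bounds the diagonal length $|u(P_1)-u(P_4)|$ by $C(1+\sqrt G)$.

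The heart of the proof is the small-energy regime, where convexity enters decisively. The octahedron, viewed as a bar framework with only its twelve unit edges, is \emph{not} rigid: it admits flexes (the classical flexible octahedra) that vary the diagonal $P_1P_4$ while preserving all unit edges, so the diagonal defect cannot be controlled by $G$ on the full linear space. The convexity of $u(\O)$ — equivalently, by Remark~\ref{rmk:3d}, the positivity of the Jacobians of all three triangulations — excludes these configurations. Concretely, I would first use compactness: if the bound failed there would be admissible $u_n$ with $G(u_n)\to0$ but the diagonal defect not $O(G(u_n))$; after a translation the (bounded) vertex images $u_n(P_i)$ converge to points $q_i$ forming a convex octahedron all of whose edges have length one, whence by the Cauchy Rigidity Theorem $q_i=RP_i+t$ for some $R\in O(3)$, with $R\in SO(3)$ because $\det F_m>0$. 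This shows that small $G$ forces the configuration into a neighbourhood of a rotation of the regular octahedron. There, by the infinitesimal rigidity of convex triangulated polyhedra, a second-order expansion of the edge defects about the rotated configuration (all first-order terms vanishing, as in Lemma~\ref{lemma-equiv}) gives $\dist^2(F_m,SO(3))\le C\,G$, and in particular the diagonal bound, uniformly for $G$ below some threshold (uniformity over the compact family of rotations following by $SO(3)$-equivariance).

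Finally, the intermediate range is dealt with by continuity: on the compact set of admissible configurations, modulo translation, with $G$ bounded away from $0$ and $\infty$, the ratio $\dist^2(F_m,SO(3))/G$ is continuous and finite, hence bounded. Patching the three regimes yields a uniform constant $C$. The main obstacle is precisely the third step: the twelve-edge framework is flexible, so the diagonal bond absent from the energy cannot be discarded on purely metric grounds, and only the convexity constraint — through Cauchy's theorem together with the orientation condition of Remark~\ref{rmk:3d} — restores the rigidity needed to complete the estimate.
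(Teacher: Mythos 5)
Your overall strategy is sound and is in fact a faithful elaboration of what the paper only sketches: the paper disposes of this lemma in one sentence (``subdividing it into four tetrahedra and using Remark~\ref{rmk:3d} and Lemma~\ref{lemma:tetra}''), and your proposal supplies exactly the missing quantitative core. Your decomposition into the four tetrahedra $P_1P_4P_2P_5$, $P_1P_4P_5P_3$, $P_1P_4P_3P_6$, $P_1P_4P_6P_2$ is the correct reading of ``cutting along $(1,1,0)$''; the observation that each tetrahedron contributes five unit edges to the energy while the shared diagonal $P_1P_4$ is a next-to-nearest-neighbour bond absent from the right-hand side correctly isolates the crux; and the large-energy bound $\dist^2(F_m,SO(3))\le C(1+G)$ via three independent unit-edge vectors is fine and in fact already subsumes your ``intermediate regime''. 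Your insight that convexity is what restores rigidity is precisely the rationale of Remark~\ref{rmk:3d}, so in spirit you and the paper agree; you simply make explicit (compactness, Cauchy's theorem, a second-order expansion resting on infinitesimal rigidity of convex simplicial polyhedra) what the authors leave implicit. One inaccuracy of formulation: the twelve-edge octahedral framework is \emph{infinitesimally rigid at the convex configuration} (this is Dehn's theorem, which your own small-energy expansion relies on); the flexes you cite (Bricard) and the zero-energy compressions of Remark~\ref{rmk:3d} live at non-convex, self-intersecting configurations. So the framework is not ``flexible'' near the regular octahedron; rather, the estimate fails \emph{globally} on orientation-preserving maps without the convexity constraint, which is the correct statement and the one you actually use.

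There is, however, a genuine gap in your compactness step. You assert that the limit points $q_i$ ``form a convex octahedron all of whose edges have length one'' and then invoke Cauchy's theorem. But Cauchy's theorem applies only to nondegenerate convex polyhedra, while a limit of admissible configurations may a priori degenerate: the limit set is convex but possibly planar, and non-adjacent vertices (the diagonal pairs $\{P_1,P_4\}$, $\{P_2,P_3\}$, $\{P_5,P_6\}$, whose mutual distances do not enter $G$) may coincide in the limit. Such degenerate configurations with all twelve edge lengths equal to one do exist abstractly (e.g.\ $q_1=q_4$ with the four remaining points on a unit circle, or $q_2=q_3$, $q_5=q_6$ giving a doubled rhombus), so they must be actively excluded before Cauchy can be applied. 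This can be done — for instance, the four positively oriented image tetrahedra glue cyclically around the segment $u(P_1)u(P_4)$ with dihedral angles in $(0,\pi)$ summing to exactly $2\pi$, which is incompatible with near-flat data; alternatively, the closure condition for the equatorial quadrilateral $q_2q_5q_3q_6$ inscribed with four unit chords pins its circumradius at $\tfrac{1}{\sqrt2}$, so flat limits force an equatorial edge defect bounded away from zero — but some such argument is needed and is missing from your proposal. Once nondegeneracy of the limit is secured, the rest (Cauchy, the selection of $R\in SO(3)$ from $\det F_m>0$, the quantitative local estimate via infinitesimal rigidity with $SO(3)$-equivariance) goes through as you describe.
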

\par
Thanks to the rigidity of the lattice, all the proofs presented in Section \ref{sec:lim} are extended to the present context.
The dimension reduction is performed with respect to the directions $\wu_2$, $\wu_3$.
As in the two-dimensional case, the limit of a sequence of discrete deformations with equibounded energy
does not depend on the triangulation chosen for the octahedra, see Remark \ref{rmk080213}.
The definition of the limiting functional and of its domain are the obvious extension of
\eqref{gammah}, \eqref{gammadomain}, and \eqref{eqthm3}.
We remark that the $\Gamma$-limit does not depend on the choice of the triangulation $\TT_{\rho,\eps}^{(1)}$ in \eqref{ad-3d},
since its formula only depends on the discrete values of the deformation, and not on its extension to the three-dimensional continuum.
\par
Arguing as in Section \ref{sec:gamma}, we prove that $\ga^\lambda(\rho,k,R)=\ga^\lambda(\rho,k,I)=:\ga^\lambda(\rho,k)$ for every $R\in SO(3)$
and the estimates
$$
C_1 k^3 \le \ga^\lambda(1,k) \le C_2 k^3
$$
and
$$
C_1' k^2\le\ga^\lambda(\lambda,k) \le C_2' k^2 \,.
$$
This proves that dislocations
are energetically preferred if the thickness of the nanowire is sufficiently large.
\section*{Acknowledgements} \noindent
The authors thank Roberto Alicandro for fruitful discussions
especially on the role of the non-interpenetration condition 
and for the construction of the example in Figure \ref{fig-counter}.
This research was supported by the DFG grant SCHL 1706/2-1. 
M.P.\ acknowledges the Department of Mathematics of the University of W\"urzburg,
to which she was affiliated when this work started.
\end{document}